\newcommand{\be}{\begin{equation}}
\newcommand{\ee}{\end{equation}}
\newcommand{\bea}{\begin{eqnarray}}
\newcommand{\eea}{\end{eqnarray}}
\newcommand{\bean}{\begin{eqnarray*}}
\newcommand{\eean}{\end{eqnarray*}}
\newcommand{\brray}{\begin{array}}
\newcommand{\erray}{\end{array}}
\newcommand{\bdfn}{\begin{dfn}\rm}
\newcommand{\bthm}{\begin{thm}}
\newcommand{\blmma}{\begin{lmma}}
\newcommand{\bppsn}{\begin{ppsn}}
\newcommand{\bcrlre}{\begin{crlre}}
\newcommand{\bxmpl}{\begin{xmpl}}
\newcommand{\brmrk}{\begin{rmrk}\rm}
\newcommand{\edfn}{\end{dfn}}
\newcommand{\ethm}{\end{thm}}
\newcommand{\elmma}{\end{lmma}}
\newcommand{\eppsn}{\end{ppsn}}
\newcommand{\ecrlre}{\end{crlre}}
\newcommand{\exmpl}{\end{xmpl}}
\newcommand{\ermrk}{\end{rmrk}}
\newcommand{\bbc}{\mathbb{C}}
\newcommand{\bbz}{\mathbb{Z}}
\newcommand{\bbn}{\mathbb{N}}
\newcommand{\bbr}{\mathbb{R}}
\newcommand{\cla}{\mathcal{A}}
\newcommand{\clh}{\mathcal{H}}
\newcommand{\clk}{\mathcal{K}}
\let\@wraptoccontribs\wraptoccontribs
\title{On  Multiparameter CAR Flows}
\author{C. H. Namitha  and S. Sundar}
\address{The Institute of Mathematical Sciences, A CI of Homi Bhabha National Institute, 4th cross street, CIT Campus, Taramani, Chennai, India, 600113}
\email{namithachanguli7@gmail.com, sundarsobers@gmail.com}
\newtheorem{definition}{Definition}[section]
\newtheorem{prop}[definition]{Proposition}
\newtheorem{theorem}[definition]{Theorem}
\newtheorem{lemma}[definition]{Lemma}
\newtheorem{remark}[definition]{Remark}
\numberwithin{equation}{section}
\newcommand{\RNum}[1]{\uppercase\expandafter{\romannumeral #1\relax}}
\begin{document}
\maketitle

\begin{abstract}
Let $P$ be a pointed, closed convex cone in $\bbr^d$. We prove that for two pure isometric representations $V^{(1)}$ and $V^{(2)}$ of $P$, the associated CAR flows $\beta^{V^{(1)}}$ and $\beta^{V^{(2)}}$ are cocycle conjugate if and only if $V^{(1)}$ and $V^{(2)}$ are unitarily equivalent.  We also give a complete description of pure isometric representations of $P$ with commuting range projections that give rise to type I CAR flows.  We show that such an isometric representation  is completely reducible with each irreducible component being a pullback of the shift semigroup $\{S_t\}_{t \geq 0}$ on $L^2[0,\infty)$. 
We also compute the index and   the gauge group of  the associated CAR flows and show that the action of the gauge group on the set of normalised units need not be transitive. 
 \end{abstract}

\noindent {\bf AMS Classification No. :} {Primary 46L55; Secondary 46L99.}  \\
{\textbf{Keywords :} $E_{0}$-semigroups, CAR flows, Type I, Gauge group}


\section{Introduction}
Broadly speaking, the subject of  irreversible non-commutative dynamics is concerned with the study of 
dynamical systems where instead of a group action on  a non-commutative space (Hilbert spaces, $C^{*}$-algebras, von Neumann algebras), we have
 a semigroup action. The  operator algebraic aspects of such irreversible dynamical systems  have received the attention
 of many authors over the years. Some of the topics that were investigated in detail and continue to be a source for  much research are 
 semigroup $C^{*}$-algebras (\cite{Li_Oberwolfach}), semi-crossed products of non self-adjoint algebras (\cite{KDFK}), dilation theory of semigroups of contractions and CP-semigroups (\cite{Shalit_dilation}), $E_0$-semigroups and 
 product systems (\cite{arveson}). This paper comes under the topic of $E_0$-semigroups and product systems. 
 
 An $E_0$-semigroup (CP-semigroup)  over a semigroup $P$ on $B(\mathcal{H})$ is a semigroup $\alpha:=\{\alpha_x\}_{x \in P}$ of unital, normal $^*$-endomorphisms (CP-maps) of $B(\mathcal{H})$. If $P$ has a topology, we require the semigroup $\alpha$ to satisfy an appropriate continuity hypothesis. 
The study of such semigroups, when $P=[0,\infty)$, has a long history which dates back to  Powers' works (\cite{Powers_Index}, \cite{Powers_TypeIII}).  Arveson wrote several influential papers on the subject (\cite{Arv_Fock}, \cite{Arv_Fock2}, \cite{Arv_Fock3}, \cite{Arv_Fock4}) and also authored  \cite{arveson} which is  the standard reference for the subject. More on Arveson's contribution and by others to the subject of $E_0$-semigroups can be found in a  survey article (\cite{Izumi}) by Izumi. 

 In the last fifteen years, several papers (\cite{Shalit},\cite{Shalit_2008}, \cite{Solel}, \cite{Shalit_Solel}, \cite{Skeide_Shalit}, \cite{MuruganSP}, \cite{anbuCAR}, \cite{Anbu_Vasanth}, \cite{Anbu_Sundar}, \cite{SUNDAR}) appeared where  semigroups of endomorphisms/CP-maps and product systems, over more general monoids, were considered. Moreover, it was demonstrated  that  significant differences show up in the multiparameter case. A few features that are in stark contrast to the $1$-parameter case are listed below. 
\begin{enumerate}
\item A CP-semigroup over $\mathbb{N}^{3}$ need not have a dilation to an $E_0$-semigroup (\cite{Shalit_Skeide_2011}).
\item CCR and CAR flows need not be cocycle conjugate in the multiparameter case (\cite{srinivasanccr}).
\item In the multiparameter case, decomposable product systems need not be spatial (\cite{SUNDAR},  \cite{Namitha_Sundar}). 
\end{enumerate}
These contrasting phenomena make the multiparameter theory interesting, and the authors  believe that multiparameter $E_0$-semigroups are objects worthy of investigation. 
Here, we study a class of $E_0$-semigroups called CAR flows and we classify a subclass of them.

In view of the bijective correspondence between the class of product systems and the class of $E_0$-semigroups (\cite{Arv}, \cite{Skeide}, \cite{MuruganSP}), we explain the problem studied and the results obtained in the language of product systems. A product system is a measurable field of separable Hilbert spaces $E:=\{E(x)\}_{x \in P}$ endowed with a multiplication that is compatible with the measurable structure. 

The two simplest product systems, whose definitions we recall,  are the ones associated with CCR and CAR flows.  Let $\mathcal{H}$ be a separable Hilbert space.  (All  Hilbert spaces considered in this paper are tacitly assumed to be separable.) We denote the symmetric Fock space of $\mathcal{H}$ by $\Gamma_s(\mathcal{H})$ and the antisymmetric Fock space of $\mathcal{H}$ by $\Gamma_a(\mathcal{H})$. Throughout this paper, the letter $P$ stands for a closed, convex cone in $\bbr^d$ that is pointed, i.e. $P \cap -P=\{0\}$ and spanning, i.e. $P-P=\bbr^d$. Let $V:=\{V_x\}_{x \in P}$ be a strongly continuous semigroup of isometries on $\mathcal{H}$. We call such a semigroup of isometries an isometric representation of $P$ on $\mathcal{H}$. We assume $V$ is pure, i.e. $\displaystyle \bigcap_{x \in P}V_x\clh=\{0\}$. 

 Consider the field of Hilbert spaces $F^V:=\{F^V(x)\}_{x \in P}$,  $F^V(x):=\Gamma_s(Ker(V_x^*))$ for $x \in P$. We impose a measurable structure on $F^V$ as follows. For every $x \in P$, we can view $F^V(x)$ as a subspace of $\Gamma_s(\clh)$ as the embedding $Ker(V_x^*) \subset \clh$ induces an embedding of $\Gamma_s(Ker(V_x^*))$ in $\Gamma_s(\clh)$. Let $\Gamma$ be the set of all maps $t:P \to \Gamma_s(\clh)$ such that 
 \begin{enumerate}
 \item[(a)] the map $t$ is weakly measurable, and
 \item[(b)] for $x \in P$, $t(x) \in F^V(x)$.
 \end{enumerate}. Then, $F^V$ is a measurable field of Hilbert spaces with $\Gamma$ being the space of measurable sections. Define a product rule on $F^V$  by
 \begin{equation}
 \label{multiplication formula}
 e(\xi)e(\eta)=e(\xi+V_x\eta)\end{equation}
 for $\xi \in Ker(V_x^*)$ and $\eta \in Ker(V_y^*)$. Here, $\{e(\xi):\xi \in Ker(V_x^*)\}$ denotes the collection of exponential vectors in the symmetric Fock space $\Gamma_s(Ker(V_x^*))
$. Then, the multiplication is compatible with  the measurable structure  making $F^V$ a product system. The product system $F^V$ is called the product system of the  CCR flow associated with $V$. The corresponding $E_0$-semigroup $\alpha^V$ is called the CCR flow associated with $V$. Concrete examples of multiparameter CCR flows and their intrinsic properties like index, gauge group, type  were analysed in \cite{Anbu_Sundar}, \cite{Anbu_Vasanth} and in \cite{Piyasa_Sundar1}.

 For $x \in P$, let $\Omega_x$ be the vacuum vector of $\Gamma_a(Ker(V_x^{*}))$. 
 Consider the field of Hilbert spaces $E^{V}:=\{E^{V}(x)\}_{x\in P}$, where $E^{V}(x):=\Gamma_{a}(Ker(V_{x}^{*}))$ for $x\in P$.  The measurable structure on $E^V$ is defined as in the CCR case.  Define a multiplication on $E^{V}$ as follows:
\begin{align}\label{multiplication} \xi \cdot \eta=V_{x}\eta_1 \wedge V_x \eta_2 \cdots \wedge V_x \eta_n \wedge \xi_1 \wedge \xi_2 \wedge \cdots \wedge \xi_m
 \end{align} for $\xi=\xi_1 \wedge \xi_2 \cdots \wedge \xi_m\in \Gamma_a(Ker(V_{x}^{*})$,  $\eta=\eta_1 \wedge \eta_2 \wedge \cdots \wedge \eta_n \in \Gamma_a(Ker(V_{y}^{*}))$ and $x,y\in P$.
For $m=0$ or $n=0$, Eq. \ref{multiplication} is interpreted as follows:
\begin{align*}
 \Omega_x \cdot \Omega_y&=\Omega_{x+y},\\
 \Omega_x \cdot \eta&=V_{x}\eta_1 \wedge V_x \eta_2 \wedge \cdots \wedge V_x \eta_n, \\
 \xi \cdot \Omega_y&=\xi_1 \wedge \xi_2 \wedge \cdots \wedge \xi_m.
\end{align*}
Then, $E^V$ is a product system and is called the product system of the CAR flow associated with $V$. The corresponding $E_0$-semigroup $\beta^V$ is called the CAR flow associated with $V$.

It is known that in the $1$-parameter case, i.e. when $P=[0,\infty)$, $E^V$ and  $F^V$ are isomorphic (\cite{Powers_Rob}). Then, it follows from the work of Arveson (\cite{Arv_Fock}) that $1$-parameter CAR flows are classified by a single numerical invariant called  index that takes values in $\{0, 1,2,\cdots\} \cup \{\infty\}$. In particular, the map 
\[
V \to E^V\]
is injective. Also, $1$-parameter CAR flows are  type I.  Here, we take up the multiparameter case, and we completely classify type I CAR flows associated with isometric representations  with commuting range projections.

Before we state our results, we mention here that, in the multiparameter case, 
the study of CCR flows and CAR flows are not the same thing. For, it was demonstrated in \cite{srinivasanccr} that $E^V$ and $F^V$ need not be isomorphic. In \cite{anbuCAR}, Arjunan studied the decomposability of the product system $E^V$ when $V$ is the shift semigroup associated with a free and transitive action of $P$. He showed that for a large class of isometric representations $V$, $E^V$ fails to be decomposable, and hence not isomorphic to the product system of a CCR flow. 

Our first result concerning CAR flows is given below. 
\begin{theorem}
\label{oneone}
    Let $V^{(1)}$ and $V^{(2)}$ be pure isometric representations of $P$ on Hilbert spaces $\clh_1$ and $\clh_2$ respectively. The product systems $E^{V^{(1)}}$ and $E^{V^{(2)}}$ are isomorphic  if and only if $V^{(1)}$ and $V^{(2)}$ are unitarily equivalent, i.e. there exists a unitary $U:\clh_1 \to \clh_2 $ such that for every $a \in P$,
    \[
    UV_{a}^{(1)}U^{*}=V_{a}^{(2)}.
    \]
\end{theorem}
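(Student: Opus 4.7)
The forward direction is straightforward: given a unitary $U:\clh_1\to\clh_2$ with $UV^{(1)}_a=V^{(2)}_aU$ for all $a\in P$, $U$ restricts for each $x\in P$ to a unitary $U_x:\ker V^{(1)*}_x\to\ker V^{(2)*}_x$, and its antisymmetric second quantization $\Gamma_a(U_x)$ gives unitaries $E^{V^{(1)}}(x)\to E^{V^{(2)}}(x)$. A direct inspection of (\ref{multiplication}), using $UV^{(1)}=V^{(2)}U$, verifies that these unitaries assemble into a product system isomorphism.

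For the converse, let $\Phi=\{\Phi_x\}_{x\in P}$ be an isomorphism $E^{V^{(1)}}\to E^{V^{(2)}}$. First I would analyze the units of a CAR product system $E^V$. Decomposing a unit $u$ by particle number as $u_x=a_x\Omega_x+v_x+(\text{higher particle})$ with $v_x\in\ker V^*_x$, substituting into $u_x\cdot u_y=u_{x+y}$ via (\ref{multiplication}), and using the orthogonal decomposition $\ker V^*_{x+y}=V_x\ker V^*_y\oplus\ker V^*_x$, yields $a_{x+y}=a_xa_y$ and $v_{x+y}=a_xV_xv_y+a_yv_x$. A short argument examining the minimum particle number of $u_{nx}$ should some $a_{x_0}=0$ rules out the vanishing of $a$, so $a_x=e^{\lambda(x)}$ for a linear $\lambda$, and $w_x:=v_x/a_x$ is a genuine additive $V$-cocycle $w_{x+y}=V_xw_y+w_x$. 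Applied to $\tilde\Omega_x:=\Phi_x(\Omega^{(1)}_x)$, this provides a scalar $a^\Phi_x=e^{\lambda(x)}$ and an additive $V^{(2)}$-cocycle $w^\Phi$.

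Next, I would apply $\Phi$ to two special instances of (\ref{multiplication})---namely $\xi\cdot\Omega^{(1)}_y=\xi$ for $\xi\in\ker V^{(1)*}_x$ (viewed in $E^{V^{(1)}}(x+y)$ via the canonical inclusion) and $\Omega^{(1)}_x\cdot\eta=V^{(1)}_x\eta$ for $\eta\in\ker V^{(1)*}_y$---to obtain
\[
\Phi_{x+y}(\xi)=\Phi_x(\xi)\cdot\tilde\Omega_y, \qquad \Phi_{x+y}(V^{(1)}_x\eta)=\tilde\Omega_x\cdot\Phi_y(\eta).
\]
Reading off the one-particle components of both sides and using the above expansion of $\tilde\Omega$ produces a family of maps $U_x:\ker V^{(1)*}_x\to\ker V^{(2)*}_x$ which, after absorbing the cocycle $w^\Phi$ by a gauge automorphism of $E^{V^{(2)}}$ (or twisting $U_x$ by the corresponding coboundary correction), satisfies $U_{x+y}|_{\ker V^{(1)*}_x}=U_x$ and $U_{x+y}V^{(1)}_x=V^{(2)}_xU_y$. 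Since $\bigcup_x\ker V^{(1)*}_x$ is dense in $\clh_1$ by purity of $V^{(1)}$, passing to the inductive limit yields the required unitary $U:\clh_1\to\clh_2$ with $UV^{(1)}_a=V^{(2)}_aU$.

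The main obstacle is that $\tilde\Omega$ need not equal the vacuum unit $\Omega^{(2)}$: the cocycle $w^\Phi$ and the higher-particle parts of $\tilde\Omega$ produce genuine correction terms in the intertwining relations above. Since (as the abstract notes) the gauge group need not act transitively on normalized units, one cannot always reduce $\tilde\Omega$ to the vacuum by a global gauge; one must instead argue that these corrections appear in the one-particle part only as coboundaries absorbable into the definition of $U_x$. The antisymmetry of the wedge product in (\ref{multiplication}) is what should supply the necessary algebraic cancellations, and confirming this cleanly is the delicate step.
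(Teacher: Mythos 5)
Your forward direction is correct and is essentially what the paper takes for granted. For the converse, the skeleton you propose (extract ray-wise data from $\Phi$, compare one-particle components of product identities, glue by density of $\bigcup_x \ker V_x^{(1)*}$) matches the paper's endgame, but the middle of your argument has a genuine gap --- the very step you flag as ``delicate'' and leave unresolved. Concretely: before any one-particle bookkeeping can close up, you need to know that $\Phi_x$ sends $Exp_x(\eta)$ to a scalar multiple of $Exp_x(U_x\eta+\xi_x)$ for a fixed unitary $U_x$ and a fixed vector $\xi_x$. Your particle-number recursion for units only pins down the $0$- and $1$-particle parts of $\tilde\Omega$; it does not show that $\tilde\Omega_x$ is an exponential of an additive cocycle, and it gives no control over $\Phi_x(\xi)$ for a one-particle vector $\xi$ --- which is \emph{not} a one-particle vector: its $1$-particle part is $U_x\xi$ plus a rank-one correction $-\langle U_x\xi|\xi_x\rangle\,\xi_x$, and it has nonzero components in every particle number. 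Your proposed fix --- absorbing the cocycle by a gauge automorphism --- cannot work in general, as you yourself observe, since the gauge group does not act transitively on normalized units; and ``twisting $U_x$ by a coboundary correction'' is not defined until you know the exact shape of the correction, which your argument never produces.

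The paper closes this gap by a different route: it first proves (Lemma \ref{isoccrcar}) that in one parameter the CAR and CCR product systems are isomorphic via $Exp(\xi)\mapsto e(\xi)$, transports the restriction of $\Phi$ to each ray $\{ta\}_{t\ge 0}$, $a\in Int(P)$, to the CCR side, and invokes Arveson's classification of one-parameter CCR flows and their gauge groups (Cor.\ 2.6.10 and Thm.\ 3.8.4 of \cite{arveson}) to obtain the exact formula $\Psi_{ta}Exp_a(\eta)=e^{i\lambda_a t}e^{-\|\xi^a_t\|^2/2-\langle U_a\eta|\xi^a_t\rangle}Exp_a(U_a\eta+\xi^a_t)$ (Lemma \ref{isostructure}). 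Only with this formula in hand does the one-particle comparison work: differentiating $t\mapsto\Psi_aExp_a(t\xi)$ at $t=0$ isolates $\Psi_a(\xi)$, and comparing $0$- and $1$-particle parts of $\Psi_{a+b}(\xi)=\Psi_a(\xi)\cdot\Psi_bExp_b(0)$ shows the rank-one corrections cancel exactly, because $\langle U_a\xi|\xi_a\rangle=\langle U_{a+b}\xi|\xi_{a+b}\rangle$ and $\xi_{a+b}=\xi_a+V^{(2)}_a\xi_b$; this yields $U_a\xi=U_{a+b}\xi=U_b\xi$ with no gauge twisting at all. If you want to avoid the CCR detour, you would have to reprove the one-parameter gauge-group theorem for CAR flows from scratch, which is substantially more than the algebra you have written down.
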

 The above theorem was stated as Prop. 4.7 in \cite{srinivasanccr}. However, the proof given there is incorrect. The author argues that in view of Prop. 4.1 of \cite{srinivasanccr}, it suffices to prove that the gauge group of a CAR flow acts transitively on the set of normalised units, and he gives an incorrect proof of this assertion. In fact, the last assertion is  false.  We show by counterexamples that the gauge group of a CAR flow need not act transitively on the set of normalised units. 

 Thm. \ref{oneone} says that the task of parametrising/listing  CAR flows is equivalent to the problem of parametrising isometric representations of $P$. However, the process of inducing isometric representations (\cite{Piyasa_Sundar1}, \cite{Dahya}) of  $\bbn^2$ to that of $\bbr_+^2$ allows us to conclude  that  the classification problem of isometric representations of $\bbr_{+}^{2}$ is at least as hard as describing   the dual of $C^{*}(\bbz_2*\bbz)$ which is known to be pathological, i.e. it is not a standard Borel space. Thus, describing a good parameterisation of  all CAR flows is beyond the scope of this paper and the authors. 
 Neverthless, we show that a suitable subclass, i.e. the class of type I CAR flows associated with isometric representations with commuting range projections can be completely classified and described in concrete terms. 
 
 Recall that an isometric representation $V=\{V_a\}_{a \in P}$ is said to have \textbf{commuting range projections} if $\{V_aV_{a}^{*}:a \in P\}$ is a commuting family of projections, and recall that a product system $E$ is said to be type I if  the only  subsystem of $E$ that contains all the units of $E$ is $E$. A unit of $E$ is a non-zero multiplicative  section of $E$. 
 
 We fix notation to describe our results.  Let $\mathbb{N}_\infty:=\{1,2,\cdots\} \cup \{\infty\}$. Let $P^{*}$ be the dual cone of $P$, i.e. 
 \[
 P^{*}:=\{\lambda \in \bbr^d: \langle \lambda|x \rangle \geq 0 \textrm{~for $x \in P$} \}.
 \]
 We denote by $S(P^*)$ the unit sphere of $P^*$, i.e.
 \[
 S(P^*):=\{\lambda \in P^*: \langle \lambda| \lambda \rangle=1\}.
 \]

 Let $\lambda \in S(P^*)$, let $k \in \bbn_\infty$, and let $\clk$ be a  Hilbert space of dimension $k$. Denote the one parameter shift semigroup on $\clh:=L^{2}([0,\infty),\clk)$ by $S^{(k)}:=\{S_t^{(k)}\}_{t \geq 0}$. Recall that for $t \geq 0$, $S_t^{(k)}$ is the isometry on $\clh$ defined by 
 \begin{equation*}
S_t^{(k)}f(x):=\begin{cases}
 f(x-t) & \mbox{ if
} x-t \geq 0,\cr
   &\cr
   0 &  \mbox{ otherwise}.
         \end{cases}
\end{equation*}
For $a \in P$, let $S_a^{(\lambda,k)}:=S_{\langle \lambda|a \rangle}^{(k)}$. Then, $S^{(\lambda,k)}:=\{S_{a}^{(\lambda,k)}\}_{a \in P}$ is an isometric representation of $P$ on $\clh$. If $k=1$, we denote $S^{(\lambda,k)}$ by $S^\lambda$. 

For a non-empty countable subset $I$, an injective map $\lambda:I \to S(P^*)$ and a map $k:I \to \bbn_\infty$, set 
\[
S^{(\lambda,k)}:=\bigoplus_{i \in I}S^{(\lambda_i,k_{i})}.
\]
 Let $E^{(\lambda,k)}$ be the product system of  the CAR flow associated with $S^{(\lambda,k)}$. 

With the above notation, we have the following classification result which is the main result of this paper.   
\begin{theorem}
\label{main intro}
For every non-empty countable set $I$, an injective map $\lambda:I \to S(P^*)$ and a map $k:I \to \bbn_\infty$, the product system $E^{(\lambda,k)}$ is type I.  Conversely, suppose $V$ is an isometric representation of $P$ with commuting range projections such that $E^V$ is type $I$.  Then,  $E^V$ is  isomorphic to $E^{(\lambda,k)}$ (equivalently, $V$ is unitarily equivalent to $S^{(\lambda,k)}$) for a  non-empty countable set $I$, an injective map $\lambda:I \to S(P^*)$ and a  map $k:I \to \bbn_\infty$. Moroever, the maps $\lambda$ and $k$ are unique up to conjugacy.  
\end{theorem}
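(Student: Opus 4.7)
The plan is to treat the two directions separately. For the forward direction, the one-parameter CAR flow associated with $S^{(k)}$ on $L^{2}([0,\infty),\mathcal{K})$ is isomorphic (via the Powers--Robinson identification) to the one-parameter CCR flow of the same multiplicity, so its product system is type I. Since $S^{(\lambda_i,k_i)}$ is the pullback of $S^{(k_i)}$ along the monoid homomorphism $a\mapsto \langle \lambda_i|a\rangle$, the product system $E^{S^{(\lambda_i,k_i)}}$ is the corresponding pullback of $E^{S^{(k_i)}}$; units pull back to units and generating subsystems pull back to generating subsystems, so $E^{S^{(\lambda_i,k_i)}}$ is type I. For a direct sum $V=\bigoplus_i V^{(i)}$ one has $\ker V_a^{*}=\bigoplus_i \ker V_a^{(i)*}$, whence $E^{V}(a)\cong \bigotimes_i E^{V^{(i)}}(a)$ as antisymmetric Fock spaces, with the vacuum sections playing the role of reference vectors for the possibly infinite tensor product. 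A routine check using products of units shows that a countable tensor product of type I product systems is type I, completing the forward direction.

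For the converse, suppose $V$ has commuting range projections and $E^V$ is type I. A joint spectral analysis of the commuting family $\{V_aV_a^{*}\}_{a\in P}$, together with the semigroup law $V_{a+b}=V_aV_b$ and the purity assumption $\bigcap_{a\in P}V_a\mathcal{H}=\{0\}$, yields a measurable decomposition of the underlying Hilbert space in which each spectral piece is ``pure along a single direction'' in $S(P^{*})$. Applying the one-parameter Wold decomposition in each such direction presents $V$, a priori, as a measurable direct integral $\int^{\oplus}_{S(P^{*})} S^{(\lambda,k(\lambda))}\,d\mu(\lambda)$ for some positive measure $\mu$ on $S(P^{*})$ and some measurable multiplicity function $k$. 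The remaining task is to upgrade this measurable decomposition to a countable direct sum of the form $\bigoplus_i S^{(\lambda_i,k_i)}$.

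This last step is the main obstacle, and it is here that the type I hypothesis enters essentially. Each unit of $E^V$, being a multiplicative section, interacts with the spectral decomposition by isolating a single direction $\lambda\in S(P^{*})$ together with a vector in the associated multiplicity space; the requirement that units generate the full product system $E^V$ then forces the measure $\mu$ to be purely atomic with countable support, ruling out any continuous or ``higher-dimensional'' component. Once this atomic structure is secured, one reads off a countable index set $I$, an injective map $\lambda:I\to S(P^{*})$, and a multiplicity map $k:I\to \mathbb{N}_\infty$, and Theorem \ref{oneone} promotes the resulting unitary equivalence $V\cong S^{(\lambda,k)}$ into the product-system isomorphism $E^V\cong E^{(\lambda,k)}$. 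Uniqueness of $(\lambda,k)$ up to conjugacy then follows from the uniqueness of the joint spectral decomposition of $\{V_aV_a^{*}\}_{a\in P}$ combined with the uniqueness of the one-parameter Wold multiplicities, since both the rays $\lambda_i$ and the multiplicities $k_i$ are intrinsic invariants of $V$.
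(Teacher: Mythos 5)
There are genuine gaps in both directions of your proposal. In the forward direction, the reduction to ``a countable tensor product of type I product systems is type I'' presupposes that $E^{V}$, for $V=\bigoplus_i V^{(i)}$, is isomorphic as a \emph{product system} to the tensor product of the $E^{V^{(i)}}$, so that products of units of the factors are units of $E^{V}$. This fails for multiparameter CAR flows: although $\Gamma_a\big(\bigoplus_i \ker V^{(i)*}_a\big)$ factors as a Hilbert space, the fermionic multiplication does not respect this factorisation, and indeed Prop.~\ref{additive cocycles that give units} shows that $Exp(\xi)$ is a unit of $E^{(\lambda,k)}$ \emph{only if} the additive cocycle $\xi$ lies in a single summand $\mathcal{A}(S^{(\lambda_i,k_i)})$. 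If the tensor-product picture held, a product of nonzero exponential units from two distinct summands would be a unit whose one-particle part lies in no single $\mathcal{A}(S^{(\lambda_i,k_i)})$, contradicting that proposition; this shortage of units is also exactly why the gauge group fails to act transitively (Thm.~\ref{gauge computation}). So ``products of units generate'' is not available. The paper instead takes an arbitrary subsystem $F$ containing all units, shows the associated family of orthogonal projections is multiplicative, classifies such families ray-by-ray via the projection-cocycle theorem (Thm.~7.6 of \cite{BhatCocycles}), patches over rays, and uses the explicit description of the units in each summand to force the projection to be the identity.

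In the converse direction, the asserted decomposition $V\cong\int^{\oplus}_{S(P^*)}S^{(\lambda,k(\lambda))}\,d\mu(\lambda)$, claimed to follow from joint spectral analysis of $\{V_aV_a^*\}_{a\in P}$ together with purity and the Wold decomposition, does not exist for general pure isometric representations with commuting range projections. Such representations correspond to covariant representations of the dynamical system $(Y_u,\bbr^d)$, and the model representations $V^\mu$ attached to invariant measures $\mu$ on $Y_u$ are pullbacks of the one-parameter shift only when $supp(\mu)$ is the orbit of a half-space $H^\lambda$; for measures supported on orbits of non-half-space sets (e.g. $A=\{x_1\le 0\}\cup\{x_2\le 0\}$ in $\bbr^2$) no direct integral of the $S^{(\lambda,k)}$ exists. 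The analytic core of the paper's converse is precisely Thm.~\ref{the most crucial theorem}: the hypothesis that $Exp(1_{X_u\setminus X_u+a})$ is a \emph{unit} forces $supp(\mu)=Y_u^{\lambda}$, via the two-particle computation (Lemma~\ref{second particle}, Lemma~\ref{epsilon a=epsilon b}) and the total-ordering of supports (Prop.~\ref{total order}). You correctly sense that the type I hypothesis must isolate single directions in $S(P^*)$, but you invoke it after a decomposition that cannot be obtained without it, and the mechanism by which units constrain the support of $\mu$ is entirely missing. The concluding Zorn's lemma assembly and the appeal to Thm.~\ref{oneone} for passing between unitary equivalence of $V$ and isomorphism of $E^V$ are consistent with the paper; the uniqueness claim also ultimately reduces, as you say, to uniqueness of the decomposition into irreducibles.
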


We explain briefly the ideas involved in the the proof of the converse part. A certain `universal irreversible' dynamical system that encodes all pure semigroups of isometries with commuting range projections was constructed in \cite{Sundar_Ore} (see also \cite{Sundar_NYJM}). The irreversible system  is given by the pair $(X_u,P)$ where
\[
X_u:=\{A \subset \bbr^d: A \neq \emptyset, A \neq \bbr^d, -P+A \subset A, 0 \in A, \textrm{~$A$ is closed}\}.
\]
The topology that we impose on $X_u$ is the Fell topology. The semigroup $P$ acts on $X_u$ by translations.

The results of \cite{sundarkms}  allow us to conclude that if we focus on one unit 
$u:=\{u_a\}_{a \in P}$ at a time, then we can assume that $V$ is the shift semigroup $V
^\mu$ on $X_u$ associated with a translation invariant Radon measure $\mu$ on $X_u$. Making use of the equality 
\[
u_au_b=u_bu_a
\]
and by  few  computations, we show that the support of $\mu$ can be identified with $[0,\infty)$. Under this identification, the action of $P$ on $[0,\infty)$ is then given by 
\[
[0,\infty) \times P \ni (x,a) \to x+\langle \lambda|a \rangle \in [0,\infty) \]
for a unique $\lambda \in  S(P^*)$. Then, it is clear that $V^{\mu}$ is unitarily equivalent to $S^\lambda$. The proof of the converse part of Thm. \ref{main intro} is completed by a Zorn's lemma argument.

We also compute the index and the gauge group, i.e. the group of automorphisms, of the product system $E^{(\lambda,k)}$. For $\ell \in \bbn_\infty$, the unitary group of a  Hilbert space of dimension $\ell$ will be denoted by $U(\ell)$. We denote the gauge group of $E^{(\lambda,k)}$ by $G$. 

\begin{theorem}

\label{gauge computation}
    Let $I$ be a non-empty countable set, let $\lambda:I \to S(P^*)$ be an injective map,  and let $k:I \to \bbn_\infty$ be a map. Then, 
    \[
    Ind(E^{(\lambda,k)})=\sum_{i \in I}k_i.
    \]
\begin{enumerate}
    \item If $I=\{i\}$ is singleton, then $E^{(\lambda,k)}$ is isomorphic to the product system of the CCR flow associated with $S^{(\lambda_i,k_{i})}$. In this case, the gauge group $G$ acts transitively on the set of normalised units, and $G$ is isomorphic to the gauge group of the  CCR flow associated with $S^{(\lambda_i,k_{i})}$.
\item If the cardinality of $I$ is at least two, then the gauge group $G$ does not act transitively on the set of normalised units. In this case, $G$ is isomorphic to $\displaystyle \bbr^d \times \prod_{i \in I}U(k_i)$.
\end{enumerate}
\end{theorem}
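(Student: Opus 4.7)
The plan is to reduce Part (1) to the one-parameter case via pullback, to derive a tensor decomposition of $E^V$ in the general case, and to extract the index and the gauge group from this decomposition together with a classification of normalised units. The hardest step will be the unit classification.

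For Part (1), observe that $S^{(\lambda,k)}$ is the pullback of the one-parameter shift $S^{(k)}$ on $L^{2}([0,\infty),\clk)$ along the linear functional $P\ni a\mapsto \langle \lambda|a\rangle$. Unwinding the definitions, the CAR product system $E^{(\lambda,k)}$ satisfies $E^{(\lambda,k)}(a)=E^{S^{(k)}}(\langle \lambda|a\rangle)$ with multiplication inherited from $E^{S^{(k)}}$, and the CCR product system $F^{S^{(\lambda,k)}}$ is the analogous pullback of $F^{S^{(k)}}$. The classical Powers--Robinson isomorphism gives $E^{S^{(k)}}\cong F^{S^{(k)}}$ in one parameter, and pulling this along $\langle\lambda|\cdot\rangle$ yields $E^{(\lambda,k)}\cong F^{S^{(\lambda,k)}}$. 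All remaining assertions for the singleton case---the value $k_i$ of the index, the identification of the gauge group with that of the CCR flow for $S^{(\lambda_i,k_i)}$, and transitivity on normalised units---then follow from the corresponding statements for the multiparameter CCR flow attached to $S^{(\lambda,k)}$ established in \cite{Anbu_Sundar,Piyasa_Sundar1}.

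For the general case $|I|\ge 2$, I would start from the natural identification
\[
E^V(a)=\Gamma_a\Bigl(\bigoplus_{i\in I} \ker(V_a^{(i)})^{*}\Bigr)\cong \bigotimes_{i\in I} E^{V^{(i)}}(a),
\]
under which the CAR multiplication factors as a graded tensor product of the component multiplications. The central structural claim is that every normalised unit $u=\{u_a\}$ of $E^V$ takes the form $u_a=\chi(a)\bigotimes_{i\in I} u_a^{(i)}$, where $\chi:P\to\bbt$ is a character and each $u^{(i)}$ is a normalised unit of $E^{V^{(i)}}$. This is obtained by expanding the multiplicativity $u_au_b=u_{a+b}$ in the tensor decomposition and reducing to the singleton case of Part (1). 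Combined with the one-parameter count of $k_i$ addits per component from Part (1), the injectivity of $\lambda$ forces additive independence across components, yielding $\mathrm{Ind}(E^{(\lambda,k)})=\sum_{i\in I} k_i$.

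From this, the gauge group $G$ is computed as follows. It contains two obvious commuting subgroups that intersect trivially: (a) characters of $P$, which (since $P$ spans $\bbr^d$) extend uniquely to characters of $\bbr^d$ and contribute a copy of $\bbr^d$ acting by scalar multiplication on each fibre; and (b) unitaries $U_i\in U(k_i)$ on each one-particle space $\clk_i$ which commute with $S^{(\lambda_i,k_i)}$ and lift to automorphisms of $E^{V^{(i)}}$, combining diagonally to produce a subgroup $\prod_{i\in I} U(k_i)$. Conversely, any $g\in G$ acts on the set of normalised units and, by the classification above, on its parameter space; because the $\lambda_i$ are pairwise distinct, $g$ cannot permute tensor slots and must preserve each separately, so $g\in \bbr^d\times \prod_{i\in I} U(k_i)$. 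For non-transitivity when $|I|\ge 2$, compare the vacuum unit $\Omega_a=\bigotimes_i \Omega_a^{(i)}$ with any normalised unit $u_a=\bigotimes_i u_a^{(i)}$ in which $u^{(1)}$ is not proportional to the vacuum of $E^{V^{(1)}}$: the above description of $G$ shows that automorphisms preserve the vacuum-versus-non-vacuum type of each tensor slot, so these two units lie in distinct orbits. The main obstacle is the unit classification: unwinding the antisymmetric-Fock signs in the CAR multiplication to rule out ``entangled'' normalised units. Once this factorisation is in hand, the index, the gauge-group computation, and the failure of transitivity all reduce to bookkeeping with the one-parameter data.
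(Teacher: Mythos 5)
Your Part (1) is exactly the paper's argument (pullback plus the Powers--Robinson isomorphism, then quoting the CCR results), so there is nothing to discuss there. The problem is your ``central structural claim'' for $|I|\ge 2$, which is both weaker than what you need and, read as a characterization, false. The correct classification of units (the paper's Prop.~\ref{additive cocycles that give units}) is that $\{Exp_a(\xi_a)\}_{a\in P}$ is a unit of $E^V$ \emph{if and only if} the additive cocycle $\xi$ lies entirely in a \emph{single} component $\mathcal{A}(S^{(\lambda_i,k_i)})$; in your tensor picture, all but one factor of a normalised unit must be (a phase times) the vacuum. Your claim --- that every normalised unit is a character times an arbitrary tensor product of normalised units of the components --- is precisely the CCR answer, and the whole point of the CAR setting is that it fails: if $\xi\in\mathcal{A}(V^{(i)})$ and $\eta\in\mathcal{A}(V^{(j)})$ are both non-zero with $i\ne j$, then $Exp(\xi+\eta)$ is \emph{not} a unit. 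Establishing this is the hard content, and in the paper it rests on the $2$-particle computation and the measure-theoretic analysis of Section 4 (Lemma~\ref{second particle}, Theorem~\ref{the most crucial theorem}, Prop.~\ref{subspace}); ``unwinding the antisymmetric-Fock signs'' is exactly the step you have not done.

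This gap propagates into your gauge-group computation. Saying that $g\in G$ ``cannot permute tensor slots and must preserve each separately'' does not put $g$ into $\bbr^d\times\prod_i U(k_i)$: the gauge group of each one-parameter slot contains the Weyl-type translations $Exp_a(\eta)\mapsto e^{-\|\xi_a\|^2/2-\langle U\eta|\xi_a\rangle}Exp_a(U\eta+\xi_a)$ with $\xi\in\mathcal{A}(V^{(i)})$ non-zero, and these act transitively on normalised units in the CCR case. With your version of the unit classification such a translation would still map units to units, so nothing in your argument excludes it; you would obtain a strictly larger group and could not conclude non-transitivity. The paper excludes the translation part by applying the putative automorphism to a unit $Exp(\eta)$ with $\eta\in\mathcal{A}(V^{(j)})$, $j\ne i$, and observing that $Exp(U\eta+\xi)$ would have to be a unit while $U\eta+\xi$ does not lie in any single $\mathcal{A}(V^{(\ell)})$ --- i.e., it uses exactly the restrictive unit classification you are missing. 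Once that classification is in place, your remaining bookkeeping (orthogonality of the per-slot GNS spaces for the index, and the fact that the surviving automorphisms fix the vacuum up to a phase for non-transitivity) does match the paper.
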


The organization of this paper is as follows.

After this introductory section, in Section 2, we collect a few definitions concerning product systems, additive decomposable vectors and units.
To keep the paper  self contained, we  give an overview of the results derived in \cite{srinivasanmargett} and in \cite{anbuCAR} concerning the exponential map that plays a crucial role in our analysis. Thm. \ref{oneone} is proved in  Section 3. In Section 4, we prove Thm. \ref{main intro}. In Section 5, we prove Thm.  \ref{gauge computation}. We also prove that the gauge group of a CAR flow need not act transitively on the set of normalised units.

\section{Additive decomposable vectors and the exponential map}

We will make extensive use of the exponential map defined initially for `addits' in \cite{srinivasanmargett} and later extended to `coherent sections of additive decomposable vectors' in \cite{anbuCAR}. To keep the paper fairly self contained, we give a quick overview of the results of \cite{srinivasanmargett} and \cite{anbuCAR}.
 We start by first recalling the definition of  product systems, units, and what it means for a product system to be type I.

Let $P$ be a closed, convex, cone in $\bbr^d$, where $d \geq 1$, which is spanning, i.e. $P-P=\bbr^d$ and pointed, i.e. $P \cap -P=\{0\}$. The letter $P$ is reserved to denote such a cone for the rest of this paper. Let $P_\infty:=P \cup \{\infty\}$. For $x,y \in \bbr^d$, we say $x \leq y$ if $y-x \in P$. For $x \in P$, set 
\[
[0,x]:=\{y \in \bbr^d:0 \leq y \leq x\}.
\]
For $x=\infty$, we let $[0,x]=P$.

Let $E:=\{E(x)\}_{x \in P}$ be a measurable field of non-zero separable Hilbert spaces together with an associative  multiplication defined on the disjoint union $\displaystyle \coprod_{x \in P}E(x)$. Then, $E$ together with the multiplication is called \emph{a product system} if 
the following properties are satisfied.
\begin{enumerate}
\item[(1)] If $u \in E(x)$ and $v \in E(y)$, then $uv \in E(x+y)$.
\item[(2)] For $x,y \in P$, the map 
\[
E(x) \otimes E(y) \ni u \otimes v \to uv \in E(x+y)
\]
is a unitary operator. 
\item[(3)] For measurable sections $r,s,t$, the map 
\[
P \times P \ni (x,y) \to \langle r(x)s(y)|t(x+y) \rangle \in \mathbb{C}\]
is measurable.
\end{enumerate}

Let $E:=\{E(x)\}_{x \in P}$ be a product system. A measurable section $\displaystyle u:P \to \coprod_{x \in P}E(x)$ is called a \emph{unit} if 
\begin{enumerate}
\item[(1)]for $x \in P$, $u_x \neq 0 $, and
\item[(2)] for $x,y \in P$, $u_xu_y=u_{x+y}$.
\end{enumerate}
We denote the set of units of $E$ by $\mathcal{U}_E$.
 We say that $E$ is \emph{spatial} if $\mathcal{U}_E$ is non-empty.

Let $F:=\{F(x)\}_{x \in P}$ be a field of non-zero Hilbert spaces such that, for $x \in P$, $F(x) \subset E(x)$. We say $F$ is a \emph{subsystem of $E$} if  for every $x,y \in P$, \[
F(x+y)=\overline{span\{uv: u \in F(x), v \in F(y)\}}.\]
Let $u \in \mathcal{U}_E$. We say a subsystem $F$ contains $u$ if $u_x \in F(x)$ for every $x \in P$. We say $F$ contains  $\mathcal{U}_E$ if $F$ contains $u$ for every $u \in \mathcal{U}_E$.  The product system $E$ is said to be type I if $E$ is spatial and the only subsystem of $E$ that contains $\mathcal{U}_E$ is $E$.

We  denote the gauge group of $E$, i.e. the group of automorphisms of $E$, by $G_E$. For $u\in\mathcal{U}_{E}$ and $\Psi\in G_{E}$, let $\Psi.u\in \mathcal{U}_{E}$ be given by \[(\Psi.u)_{x}=\Psi_x(u_{x})\] for $x\in P$. A unit $u=\{u_{x}\}_{x\in P}$ of $E$ is said to be \textbf{normalised} if $||u_{x}||=1$ for each $x\in P$. The set of normalised units of $E$ is denoted by $\mathcal{U}^{n}_{E}$. We say that the gauge group acts transitively on the set of normalised units if the action  of $G_E$ on $\mathcal{U}^{n}_E$ given by \[G_{E}\times \mathcal{U}^{n}_{E}\ni (\Psi,u)\to \Psi.u \in \mathcal{U}_E^{n}\] is transitive. 

 In \cite{srinivasanmargett}, Margetts and Srinivasan introduced the notion of addits of a $1$-parameter product system and constructed an exponential map that, after suitable normalisation, sets up a bijective correspondence between 
addits and units. The notion of addits was also considered independentely by Bhat, Lindsay and Mukherjee in \cite{Bhat_Mukherjee}. In \cite{srinivasanccr}, Srinivasan introduced the concept of  additive decomposable vectors. Imitating the techniques of \cite{srinivasanmargett}, Arjunan in \cite{anbuCAR} showed that there is a bijective correspondence between the set of additive decomposable vectors and the set of decomposable vectors. Since this bijection and the exponential map play a key role in what follows, we summarise the main resuls of \cite{srinivasanccr} and \cite{anbuCAR}. 

Let $E=\{E(x)\}_{x \in P}$ be a spatial product system over $P$ with a reference unit $e:=\{e_x\}_{x \in P}$ that is normalised, i.e. $||e_x||=1$ for $x \in P$. Such a pair $(E,e)$  was called a pointed product system in \cite{Bhat_Mukherjee}. The reference unit $e$ is fixed until further mention. 

\begin{definition}[\cite{srinivasanccr}]
    Let $x \in P$, and let $b \in E(x)$. We say that $b$ is an additive decomposable vector if $b \perp e_x$ and for $y \leq x$, there exists $b_y \in E(y)$ and $b(y,x) \in E(y-x)$ (that are necessarily unique) such that 
\begin{enumerate}
\item[(1)] $b_y \perp e_y$,  $b(y,x) \perp e_{x-y}$, and
\item[(2)] $b=b_y e_{x-y}+e_y b(y,x)$.
\end{enumerate}
\end{definition}
For $x \in P$, let \[
\mathcal{A}_e(x):=\{b \in E(x): \textrm{$b$ is additive decomposable}\}.\]

Let $x \in P_\infty$, and let $\{b_y\}_{y \in [0,x]}$ be a family of additive decomposable vectors such that $b_y \in \mathcal{A}_e(y)$ for every $y \in [0,x]$. We call such a family a  coherent section of additive decomposable vectors if for every $y,z \in [0,x]$ with $y\leq z$, there exists, a necessarily unique, $b(y,z) \in E(z-y)$ such that 
\[
b_z=b_y e_{z-y}+e_{y}b(y,z).
\]
A coherent section of additive decomposable vectors $\{b_y\}_{y \in P}$ is called \emph{an addit} if $b(y,z)=b_{z-y}$ whenever $y \leq z$.

Suppose $\{b_y\}_{y \in [0,x]}$ is a coherent section of additive decomposable vectors. 
It is clear that  the collection $\{b(y,z): y,z \in [0,x], y \leq z\}$ satisfies the following propagator equation: for $y_1 \leq y_2 \leq y_3$,
\[
b(y_1,y_3)=b(y_1,y_2)e_{y_3-y_2}+e_{y_2-y_1}b(y_2,y_3).
\]
Let $x \in P$. Given $b \in A_{e}(x)$, it follows from Lemma 3.2 of \cite{srinivasanccr} that there exists a unique coherent section of additive decomposable vectors $\{b_y\}_{y \in [0,x]}$ such that $b_x=b$. 

Next, we recall the definition of decomposable vectors. Let $x \in P$, and let $u \in E(x)$ be a non-zero vector. We say that $u$ is \emph{decomposable} if whenever $y \leq x$, there exists $v \in E(y)$ and $w \in E(x-y)$ such that $u=vw$. For $x \in P$, let \[
D_{e}(x):=\{u \in E(x): \textrm{$u$ is decomposable and $\langle u|e_x \rangle=1$}\}.
\]
Let $x \in P_\infty$, and let $\{u_y\}_{y \in [0,x]}$ be a family of decomposable vectors such that $u_y \in D_{e}(y)$ for every $y \in [0,x]$. We say that $\{u_y\}_{y \in [0,x]}$ is a left coherent section of decomposable vectors if for $y,z \in [0,x]$ with $y \leq z$, there exists a unique $u(y,z) \in D_{e}(z-y)$ such that $u_z=u_yu(y,z)$. 

Next, we recall the definition of the exponential map, in the $1$-parameter setting, that sets up a bijective correspondence between $\mathcal{A}_{e}(\cdot)$ and $D_{e}(\cdot)$. Let $E:=\{E(t)\}_{t \geq 0}$ be a $1$-parameter product system with a reference unit $\{e_t\}_{t \geq 0}$ that is normalised. Suppose $t \geq 0$. Let  $b \in \mathcal{A}_{e}(t)$ be given. Let $\{b_s\}_{s \in [0,t]}$ be the coherent section of additive decomposable vectors such that $b_t=b$.

For every $n \in \{0,1,2,\cdots\}$, define a section $x^{(n)}:[0,t] \to \displaystyle \coprod_{s \in [0,t]}E(s)$ inductively as follows: for $s \in [0,t]$, set $x^{(0)}_s:=e_s$ and $x^{(1)}_s:=b_s$, and for $n \geq 2$, let 
\[
x^{(n)}_s:=\int_{0}^{s}x_{r}^{(n-1)}db_r.
\]
Then, we set 
\[
Exp(b):=\sum_{n=0}^{\infty}x_t^{(n)}.
\]
It was proved in Prop. 3 of \cite{anbuCAR} that the series $\sum_{n=1}^{\infty}x_t^{(n)}$ is norm convergent in $E(t)$.  The integral $\int_{0}^{s}x_{r}^{(n-1)}db_r$ is called It\^{o} integral whose definition is recalled below for the reader's benefit. 

Let $n \geq 2$, and let $s \in [0,t]$ be given. For every $k \geq 1$, partition $[0,s]$ into $k$ intervals of length $\frac{s}{k}$. For $i=0,1,2,\cdots,k-1$, set $r_i^{(k)}:=\frac{is}{k}$. 
Define 
\begin{equation}
\label{ito integral}
S_k:=\sum_{i=0}^{k-1}x_{r_i^{(k)}}^{(n-1)}b(r_{i}^{(k)},r_{i+1}^{(k)})e_{s-r_{i+1}^{(k)}}
\end{equation}
Note that $S_k \in E(s)$ for every $k \geq 1$. Moreover, the sequence $(S_k)_k$ converges in norm whose limit we denote by $\int_{0}^{s}x_r^{(n-1)}db_r$.
 The norm convergence of $(S_k)_k$ was shown in \cite{srinivasanmargett} when $\{b_s\}_{s \geq 0}$ is an addit (see Prop. 5.4 of \cite{srinivasanmargett}). It was observed in \cite{anbuCAR} that the same proof works if we replace an addit by a coherent section of additive decomposable vectors (see Prop. 2 of \cite{anbuCAR}).

\begin{prop}[\cite{srinivasanmargett}, \cite{anbuCAR}]
\label{properties of exponential}
The exponential map satisfies the following key properties. 
\begin{enumerate}
\item[(1)] Let $t\geq 0$. For $b \in \mathcal{A}_e(t)$, $Exp(b) \in \mathcal{D}_e(t)$. Morever, the map 
\[
\mathcal{A}_e(t) \ni b \to Exp(b) \in D_e(t)
\]
is a bijection. For $b_1,b_2 \in \mathcal{A}_e(t)$, 
\[
\langle Exp(b_1)|Exp(b_2) \rangle=exp(\langle b_1|b_2 \rangle).
\]
For $b \in A_{e}(t)$, let $\{b_s\}_{s \in [0,t]}$ be the unique coherent section of decomposable vectors such that $b_t=b$. Then, 
\begin{equation}
\label{exp is decomposable}
Exp(b_r)Exp(b(r,s))=Exp(b_s).
\end{equation}
for $0 \leq r \leq s \leq t$. 

\item[(2)] If $\{b_t\}_{t \in [0,\infty)}$ is an addit, then $\{Exp(b_t)\}_{t \in [0,\infty)}$ is a unit. Conversely, if $\{u_t\}_{t \in [0,\infty)}$ is a unit  and $u_t \in D_e(t)$ for every $t$, then there exists an addit $\{b_t\}_{t \in [0,\infty)}$ such that $u_t=Exp(b_t)$ for every $t \geq 0$.
\end{enumerate}
\end{prop}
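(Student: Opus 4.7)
The plan is to adapt the It\^o-calculus argument of Margetts--Srinivasan \cite{srinivasanmargett}, as extended by Arjunan \cite{anbuCAR}, by proving everything first for the iterated integrals $x^{(n)}$ and then assembling the series $Exp(b)$. All four claims---norm convergence, the inner product formula, the multiplicativity identity (\ref{exp is decomposable}), and the bijectivity---reduce to identities at each order $n$ in the iterated integral expansion, so the main work is done level by level and the exponential series inherits the conclusions.

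Step one would be to prove norm convergence of the Riemann sums $(S_k)$ defining the It\^o integral, and then derive a bound of the form $\|x^{(n)}_s\| \leq \|b_s\|^{n}/\sqrt{n!}$. The key tool is the propagator relation $b(y_1,y_3) = b(y_1,y_2)e_{y_3-y_2} + e_{y_2-y_1}b(y_2,y_3)$ together with the orthogonality $b(y,z)\perp e_{z-y}$. These let one compare $S_k$ to $S_{2k}$ by a telescoping argument whose cross terms vanish to first order in the mesh, replicating Prop.~5.4 of \cite{srinivasanmargett}. The only change needed beyond the addit case is to replace the uniform $b_{s-r}$ by the coherent section's $b(r,s)$, which the propagator equation handles transparently.

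The inner product formula is then proved order by order: one shows that $\langle x^{(n)}_s\,|\,y^{(m)}_s\rangle = \delta_{n,m}\langle b^{(1)}_s|b^{(2)}_s\rangle^{n}/n!$. Different orders become orthogonal because every cross term in the Riemann sum contains at least one factor of the form $\langle b(r_i,r_{i+1})|e_{r_{i+1}-r_i}\rangle = 0$, while the diagonal terms collapse to the classical $n!$-factor through a standard combinatorial identity on ordered simplices. Summing gives $\langle Exp(b_1)|Exp(b_2)\rangle = \exp(\langle b_1|b_2\rangle)$. For the decomposition identity (\ref{exp is decomposable}), I would establish the binomial-type expansion
\[
x^{(n)}_s \;=\; \sum_{k=0}^{n} x^{(k)}_r \cdot z^{(n-k)}_{r,s},
\]
where $z^{(\ell)}_{r,s}$ is the analogous iterated integral built from the section $u\mapsto b(r,u)$ on $[r,s]$. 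This arises by splitting each Riemann partition at $r$ and grouping subintervals by side. Summing over $n$ factors $Exp(b_s) = Exp(b_r)\cdot Exp(b(r,s))$; choosing any $r$ exhibits $Exp(b)$ as decomposable, and $x^{(0)}_s = e_s$ together with orthogonality gives $\langle Exp(b)|e_s\rangle = 1$. Injectivity of $Exp$ is immediate since the degree-one term in a small-scale expansion of $Exp(b)$ recovers $b$; surjectivity onto $D_e(t)$ follows by defining an additive decomposable vector out of the left coherent section associated with a given $u \in D_e(t)$ and checking that its exponential returns $u$ on every finite partition.

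Part (2) is then a clean corollary. An addit satisfies $b(r,s) = b_{s-r}$, so the identity above becomes $Exp(b_r)Exp(b_{s-r}) = Exp(b_s)$, the unit property. The converse uses the bijectivity to invert $Exp$ pointwise on a unit $\{u_t\}\subset D_e(t)$ and read $b(r,s) = b_{s-r}$ off the unit identity, showing $\{b_t\}$ is an addit. The main obstacle I anticipate is the careful combinatorial bookkeeping needed for the binomial identity $x^{(n)}_s = \sum_k x^{(k)}_r \cdot z^{(n-k)}_{r,s}$ and the uniform factorial estimate, both of which require manipulating Riemann sums indexed by refined partitions and systematically using the orthogonality $b(y,z)\perp e_{z-y}$ to discard lower-order contributions; once these are in place, the rest of the proposition is essentially bookkeeping on geometric series.
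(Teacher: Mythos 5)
The paper gives no proof of this proposition: it states it as imported background and writes ``For a proof of the above proposition, we refer the reader to \cite{srinivasanmargett} and \cite{anbuCAR}.'' Your outline is a faithful reconstruction of the strategy of those cited sources --- the factorial estimate $\|x^{(n)}_s\|^2=\|b_s\|^{2n}/n!$ via the propagator equation and the orthogonality $b(y,z)\perp e_{z-y}$, order-by-order orthogonality giving the inner product formula, the partition-splitting identity $x^{(n)}_s=\sum_k x^{(k)}_r\cdot z^{(n-k)}_{r,s}$ for multiplicativity, and the logarithm construction for surjectivity --- so there is nothing to compare against within the paper itself, and your sketch is consistent with the argument the paper is relying on.
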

For a proof of the above proposition, we refer the reader to \cite{srinivasanmargett} and \cite{anbuCAR}.

\begin{definition}
\label{ray wise exponential}
Let $E:=\{E(x)\}_{x \in P}$ be a spatial product system over $P$ with a normalised reference unit $\{e_x\}_{x \in P}$. For $x \in P$, let $\widetilde{E}:=\{\widetilde{E}(tx)\}_{t \geq 0}$ be the spatial $1$-product system with reference unit $\{\widetilde{e}_{tx}\}_{t \geq 0}$. We denote the exponential map of $\widetilde{E}$ by $Exp_x$. 
    In particular, if $b \in \mathcal{A}_e(x)$, then $Exp_x(b)$ is a well defined vector in $E(x)$.
If $P=[0,\infty)$, then for $s \geq 0$, we omit the subscript `$s$' from $Exp_s$ and simply denote it by $Exp$. 
    
\end{definition}

Suppose $V$ is a pure isometric representation of $P$ on a  separable Hilbert space $\clh$, and let $E^V:=\{E^V(x)\}_{x \in P}$ be the product system of the CAR flow $\beta^V$. Recall that, for $x \in P$, $E^V(x)=\Gamma_a(Ker (V_x^*))$ and the multiplication rule is given by Eq. \ref{multiplication}. For $x \in P$, let  $\Omega_x$ be the vacuum vector of $\Gamma_a(Ker(V_x^*))$. Then, $\Omega:=\{\Omega_x\}_{x \in P}$ is a unit which we call the vacuum unit. We always consider the \textbf{vacuum unit as the reference unit} of $E^V$ while considering the exponential map. The set of additive decomposable vectors were determined by Srinivasan in \cite{srinivasanccr}, and we summarise the results in the next remark. A couple of definitions are in order before we make the remark.

For $x \in P$, we denote the range projection of $V_x$ by $E_x$.
\begin{definition}
    A map $\xi:P \to \clh$ is called an additive cocycle  if for $x \in P$, $E_x\xi_x=0$, and for $x,y \in P$, \[
    \xi_{x+y}=\xi_x+V_x\xi_y.
    \]
    The set of additive cocycles  is denoted by $\mathcal{A}(V)$.

    Let $u:=\{u_x\}_{x \in P}$ be a unit of $E^V$. We say that $u$ is an \textbf
{exponential unit} if $u_x \in \mathcal{D}_{\Omega}(x)$ for every $x \in P$, i.e. $\langle u_x|\Omega_x\rangle=1$ for $x \in P$. 
\end{definition}

\begin{remark}\label{rmk}
\label{Additive decomposable vectors}
With the foregoing notation, we have the following.
\begin{enumerate}
\item Let $x \in P$. Then, it follows from Prop. 4.1 of \cite{srinivasanccr} that $\mathcal{A}_{\Omega}(x)=Ker(V_x^*)$. It is easy to see that $Ker(V_x^*) \subset \mathcal{A}_{\Omega}(x)$. To see this suppose $x \in P$ and $\xi \in Ker(V_x^{*})$. Let $y \in P$ be such that $y \leq x$. Set $\xi_y:=E_y^{\perp}\xi$ and $\xi(y,x):=V_y^*\xi$.
Then, it is clear from the definition of the multiplication rule that 
\begin{equation}
\label{kernel is additive}
\xi_y \cdot \Omega_{x-y}+\Omega_y \cdot \xi(y,x)=\xi.
\end{equation}

Suppose  $P=[0,\infty)$, $t \geq 0$ and $\xi \in Ker(V_t^*)$. It follows from  Eq. \ref{exp is decomposable} that  
\[
Exp(E_s^\perp \xi)Exp(V_s^*\xi)=Exp(\xi)
\]
for $s \leq t$. Equivalently, if $P=[0,\infty)$, then for $s,t \geq 0$, $\xi \in Ker(V_s^*)$ and $\eta \in Ker(V_t^*)$,
\begin{equation}
\label{ccr is car}
Exp(\xi)Exp(\eta)=Exp(\xi+V_s\eta).
\end{equation}

\item Let $\xi:P \to \mathcal{H}$ be a map such that $\xi_x \in Ker(V_x^{*})$ for every $x \in P$. It is clear from the definition of an addit, Eq. \ref{kernel is additive} and Eq. \ref{multiplication} that $\{\xi_x\}_{x \in P}$ is an addit if and only if $\xi$ is an additive cocycle for $V$. 
\item Let $u$ be an exponential unit of $E^V$. Let $x \in P$. 
Note that $\{u_{tx}\}_{t \geq 0}$ is a unit for the one parameter product system $\{E(tx)\}_{t \geq 0}$. It follows from Prop. \ref{properties of exponential} that there exists a unique $\xi_x \in Ker(V_x^*)$ such that $u_x=Exp_x(\xi_x)$. It follows from the definition of $Exp_x$ that the $1$-particle vector of $u_x$ is $\xi_x$.
Comparing the $1$-particle vectors of the equation 
\[
Exp_{x+y}(\xi_{x+y})=u_{x+y}=u_x \cdot u_y=Exp_x(\xi_x) \cdot Exp_y(\xi_y)\]
we see that for $x,y \in P$, $\xi_{x+y}=\xi_x+V_x\xi_y$, i.e. $\xi:=\{\xi_x\}_{x \in P}$ is an additive cocycle for $V$. 

Thus, if $u$ is an exponential unit, then $u$ is of the form $\{Exp_x(\xi_x)\}_{x \in P}$ for a unique  $\xi \in \mathcal{A}(V)$. However, it is not necessary  that  
$Exp(\xi):=\{Exp_x(\xi_x)\}_{x \in P}$ is a unit if $\xi \in \mathcal{A}(V)$ (see Prop. \ref{additive cocycles that give units}). 
\end{enumerate}
 \end{remark}

 \section{Injectivity of the CAR functor} 
In this section, we prove Thm. \ref{oneone}.  Suppose $V$ is a pure isometric representation of $[0,\infty)$. Let $F:=\{F(t)\}_{t \geq 0}$ be the product system of the associated CCR flow $\alpha^V$. Recall that for $t \geq 0$, $F(t):=\Gamma_s(Ker(V_t^*))$ and the multiplication rule is given by
 \begin{equation}
 \label{ccr formula}
 e(\xi)e(\eta)=e(\xi+V_s\eta)\end{equation}
 for $\xi \in Ker(V_s^*)$ and $\eta \in Ker(V_t^*)$. Here, $\{e(\xi):\xi \in Ker(V_s^*)\}$ denotes the set of  exponential vectors.
The vacuum unit of $F$ is denoted by $\Omega:=\{\Omega_t\}_{t \geq 0}$. We use the same letter $\Omega$ to denote the vacuum unit of both CCR and CAR flows. 
 Let  $E$  denote  the product system of the CAR flow $\beta^V$ with the reference unit $\Omega$. For $t\geq 0$, let $D^{E}(t)$ and $D^{F}(t)$ denote the decomposable vectors in $E(t)$ and $F(t)$ respectively.
 
  For $t\geq 0$, let  \[D_{\Omega}^{E}(t):=\{u\in D^{E}(t): \textrm{$\langle u|\Omega_t\rangle=1$}\}\] and $D_{\Omega}^{F}(t)$ is defined similarly. 
  From Remark \ref{Additive decomposable vectors} and Prop. \ref{properties of exponential}, we have \[D_{\Omega}^{E}(t)=\{Exp(\xi):\textrm{$\xi\in Ker(V_{t}^{*}) $}\},\]
  and from Prop. 2.2 of \cite{SUNDAR} we have \[D_{\Omega}^{F}(t)=\{e(\xi): \textrm{ $\xi\in Ker(V_{t}^{*}$}\}.\] 
It is known that $E$ and $F$ are isomorphic, a fact first proved by Robinson and Powers in \cite{Powers_Rob}. For our purposes, we need the following coordinate free isomorphism.

\begin{lemma} \label{isoccrcar}For each $t\geq 0$, the map $\Psi_{t}:D_{\Omega}^{E}(t)\to D_{\Omega}^{F}(t)$ defined by \[D_{\Omega}^{E}(t)\ni Exp(\xi)\to e(\xi)\in D_{\Omega}^{F}(t)\] is a bijection. Moreover, 
\[\Psi_{s}Exp(\xi).\Psi_{t}Exp(\eta)=\Psi_{s+t}(Exp(\xi).Exp(\eta))\] for $s,t \geq 0$,  $\xi\in Ker(V^{*}_{s})$ and $\eta\in Ker(V^{*}_{t})$.
  The map $\Psi_{t}$ extends uniquely to a unitary again denoted $\Psi_t: E(t)\to F(t)$ for $t\geq 0$. The field of maps $\Psi:=\{\Psi_{t}\}_{t\geq 0}$ is an isomorphism from $E$ onto $F$. \end{lemma}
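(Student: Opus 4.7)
\emph{Plan of proof.} The plan is to prove, in sequence: (i) $\Psi_t$ is a well-defined bijection $D_{\Omega}^{E}(t) \to D_{\Omega}^{F}(t)$ satisfying the stated multiplicativity; (ii) it preserves inner products and extends to an isometry whose image is dense in $F(t)$; and (iii) the closed linear span of $\{Exp(\xi): \xi \in Ker(V_{t}^{*})\}$ is all of $E(t)$. Step (iii) is the main obstacle.

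For (i), by Remark \ref{rmk}, $\mathcal{A}_{\Omega}(t) = Ker(V_{t}^{*})$, and Proposition \ref{properties of exponential} gives the bijection $Ker(V_{t}^{*}) \ni \xi \mapsto Exp(\xi) \in D_{\Omega}^{E}(t)$; the corresponding map $\xi \mapsto e(\xi)$ onto $D_{\Omega}^{F}(t)$ is a standard fact from the theory of symmetric Fock spaces. Hence $\Psi_{t}$ is well-defined and bijective. The multiplicativity is a direct computation: Eq.~\ref{ccr is car} gives $Exp(\xi) \cdot Exp(\eta) = Exp(\xi + V_{s}\eta)$ on the CAR side, and Eq.~\ref{ccr formula} gives $e(\xi) \cdot e(\eta) = e(\xi + V_{s}\eta)$ on the CCR side, so both sides of the claimed identity evaluate to $e(\xi + V_{s}\eta)$.

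For (ii), Proposition \ref{properties of exponential} yields $\langle Exp(\xi), Exp(\eta)\rangle = \exp(\langle \xi, \eta\rangle)$, which matches $\langle e(\xi), e(\eta)\rangle$ by the standard symmetric Fock space inner product formula. Sesquilinear extension to the linear span of $D_{\Omega}^{E}(t)$ then produces an isometry whose image is the linear span of $D_{\Omega}^{F}(t)$, which is dense in $F(t) = \Gamma_{s}(Ker(V_{t}^{*}))$. Consequently, $\Psi_{t}$ extends uniquely to a unitary from the closed linear span of $\{Exp(\xi): \xi \in Ker(V_{t}^{*})\}$ onto $F(t)$, and the lemma reduces to showing this closed span equals $E(t)$.

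For (iii), let $v \in E(t) = \Gamma_{a}(Ker(V_{t}^{*}))$ be orthogonal to every $Exp(\xi)$, and decompose $v = \sum_{n} v^{(n)}$ by particle number. The iterated It\^{o} integrals $x^{(n)}_{t}(\xi)$ in the construction of $Exp(\xi)$ are $n$-homogeneous in $\xi$, so $Exp(\epsilon \xi) = \sum_{n} \epsilon^{n} x_{t}^{(n)}(\xi)$, and hence $\epsilon \mapsto \langle v, Exp(\epsilon \xi)\rangle$ is entire in $\epsilon$ and vanishes on $\bbr$. Equating Taylor coefficients yields $\langle v^{(n)}, x_{t}^{(n)}(\xi)\rangle = 0$ for every $n \geq 0$ and every $\xi \in Ker(V_{t}^{*})$. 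Reducing via the Wold decomposition to the case where $V$ is the shift on $L^{2}([0,\infty), \clk)$, a direct computation identifies $x_{t}^{(n)}(\xi)$ with the antisymmetric function on $[0,t]^{n}$ whose restriction to the time-ordered simplex $\{s_{1} > \cdots > s_{n}\}$ is proportional to $\xi(s_{1}) \otimes \cdots \otimes \xi(s_{n})$. Polarisation in $\xi$, combined with the density of simple tensors in $L^{2}([0,t], \clk)^{\otimes n}$, then forces $v^{(n)} = 0$ for every $n$, so $v = 0$. Combining (i)--(iii) yields $\Psi_{t}$ as a unitary $E(t) \to F(t)$, and multiplicativity on general vectors follows from the decomposable case by sesquilinearity and continuity using the density just established, giving the product system isomorphism.
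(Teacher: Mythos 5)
Your proposal is correct, and its skeleton coincides with the paper's: define $\Psi_t$ on the decomposable vectors via $Exp(\xi)\mapsto e(\xi)$, use $\langle Exp(\xi)|Exp(\eta)\rangle=e^{\langle \xi|\eta\rangle}=\langle e(\xi)|e(\eta)\rangle$ together with totality of $D_{\Omega}^{E}(t)$ and $D_{\Omega}^{F}(t)$ to extend to a unitary, and get multiplicativity from Eq. \ref{ccr is car} versus Eq. \ref{ccr formula}. The one substantive divergence is the step you correctly single out as the main obstacle: the totality of $\{Exp(\xi):\xi\in Ker(V_t^*)\}$ in $E(t)=\Gamma_a(Ker(V_t^*))$. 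The paper disposes of this (for both $E(t)$ and $F(t)$) by a single citation to Corollary 6.8.3 of Arveson's book, whereas you prove it from scratch: expand $v\perp Exp(\epsilon\xi)$ in particle number, use the $n$-homogeneity $x_t^{(n)}(\epsilon\xi)=\epsilon^n x_t^{(n)}(\xi)$ (the same observation the paper itself exploits later in the proof of Theorem \ref{injectivityCAR}), identify $x_t^{(n)}(\xi)$ with the antisymmetrisation of $\xi^{\otimes n}$ supported on the time-ordered simplex after a Wold/Cooper reduction to the shift on $L^2([0,\infty),\clk)$, and polarise. This computation is consistent with (and generalises) the paper's own Lemma \ref{second particle}, which does exactly the $n=2$ case; the polarisation step works because an antisymmetric function is determined by its restriction to the ordered simplex and the symmetrised simple tensors restrict to a total family there. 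What your route buys is a self-contained proof that does not lean on Arveson's decomposability machinery; what it costs is a page of Fock-space bookkeeping that the citation makes unnecessary. No gaps.
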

\begin{proof}
 From Corollary 6.8.3 of \cite{arveson}, $D_{\Omega}^{E}(t)$ is total in $E(t)$, and $D_{\Omega}^{F}(t)$ is total in $F(t)$ for $t\geq 0$. 
Also, by Prop. \ref{properties of exponential}, \[\langle Exp(\eta)|Exp(\xi)\rangle=e^{\langle \eta|\xi \rangle}=\langle e(\eta)|e(\xi)\rangle\] for $\eta,\xi\in Ker(V_{t}^{*})$ for $t\geq 0$. Hence, $\Psi_{t}$ can be extended to a unitary operator, again denoted $\Psi_t$,  $\Psi_{t}:E(t)\to F(t)$, for $t\geq 0$.
It follows from Eq. \ref{ccr is car} and Eq. \ref{ccr formula} that $\Psi:=\{\Psi_t\}_{t \geq 0}:E \to F$ is an isomorphism.  Hence the proof.
\end{proof}
Suppose $V^{(1)}$ and $V^{(2)}$ are pure isometric representations of $[0,\infty)$ on $\mathcal{H}$ and $\mathcal{K}$ respectively.  Denote the product systems of the respective CAR flows by $E_{1}$ and $E_{2}$. 
Let \[\mathscr{U}(V^{(1)}, V^{(2)}):=\{U:\mathcal{H}\to\mathcal{K}:\textrm{ $U$ is a unitary, $UV^{(1)}_{t}=V^{(2)}_{t}U$, $UV^{(1)*}_{t}=V^{(2)*}_{t}U$ for $t\geq 0$}\}.\]
The proof of the next lemma is essentially an application of   Lemma \ref{isoccrcar} and the  gauge group computation of CCR flows due to Arveson. 

  \begin{lemma}\label{isostructure} 
  Suppose $\Psi: E_{1}\to E_{2}$ is an isomorphism of product systems. 
Then, there exists $U\in\mathscr{U}(V^{(1)}, V^{(2)})$, $\xi=\{\xi_{t}\}_{t\geq 0}\in\mathcal{A}(V^{(2)})$, and $\lambda\in\mathbb{R}$ such that

  \[\Psi_{t}(Exp(\eta))=e^{i\lambda t}e^{\frac{-||\xi_{t}||^{2}}{2}-\langle U\eta|\xi_{t}\rangle }Exp(U\eta+\xi_{t})\] for $\eta\in Ker(V^{(1)*}_{t})$, $t\geq 0$.
  \end{lemma}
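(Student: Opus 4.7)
The plan is to translate the isomorphism $\Psi:E_1\to E_2$ of CAR product systems into an isomorphism of the corresponding CCR product systems via Lemma \ref{isoccrcar}, and then invoke the known classification of isomorphisms between one-parameter CCR product systems. Concretely, let $F_1$ and $F_2$ denote the product systems of the CCR flows $\alpha^{V^{(1)}}$ and $\alpha^{V^{(2)}}$, and let $\Psi^{(1)}:E_1\to F_1$ and $\Psi^{(2)}:E_2\to F_2$ be the isomorphisms produced by Lemma \ref{isoccrcar}. Then $\widetilde{\Psi}:=\Psi^{(2)}\circ\Psi\circ(\Psi^{(1)})^{-1}:F_1\to F_2$ is an isomorphism of CCR product systems, and it suffices to prove the analogous formula for $\widetilde{\Psi}$ on exponential vectors, since then transferring back via $\Psi_t=(\Psi^{(2)}_t)^{-1}\circ\widetilde{\Psi}_t\circ\Psi^{(1)}_t$ together with $\Psi^{(i)}_t(Exp(\cdot))=e(\cdot)$ yields the stated expression.

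Next, I would derive the structure of $\widetilde{\Psi}$ by first applying it to the vacuum. Since $\{\Omega_t\}_{t\geq 0}$ is a unit of $F_1$, multiplicativity of $\widetilde{\Psi}$ forces $\{\widetilde{\Psi}_t(\Omega_t)\}_{t\geq 0}$ to be a unit of $F_2$. By Arveson's classification of units of a CCR flow (see \cite{arveson}, \cite{Arv_Fock}), every such unit has the form
\[
\widetilde{\Psi}_t(\Omega_t)=e^{i\lambda t-\|\xi_t\|^2/2}\,e(\xi_t)
\]
for some $\xi=\{\xi_t\}_{t\geq 0}\in\mathcal{A}(V^{(2)})$ and $\lambda\in\mathbb{R}$. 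To locate the unitary $U$, I would consider the ``vacuum preserving'' part $\widetilde{\Psi}'_t:=M_t^{-1}\widetilde{\Psi}_t$, where $M_t$ denotes multiplication by the Weyl-like cocycle attached to $(\xi,\lambda)$; this $\widetilde{\Psi}'$ is again an isomorphism $F_1\to F_2$ but now sends $\Omega\mapsto\Omega$. For any $\eta\in\ker(V^{(1)*}_t)$, compare the one-particle components of the two expressions of $\widetilde{\Psi}'_{s+t}(e(\eta_1)e(\eta_2))$ obtained by using the CCR multiplication formula (\ref{ccr formula}) before and after applying $\widetilde{\Psi}'$; this produces a bounded linear map $U:\mathcal{H}\to\mathcal{K}$ with $\widetilde{\Psi}'_t(e(\eta))=e^{-\langle U\eta|\xi_t\rangle}e(U\eta)$ and with $U V_s^{(1)}=V_s^{(2)}U$. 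Unitarity of $U$ then follows from $\langle e(\eta)|e(\eta')\rangle=e^{\langle\eta|\eta'\rangle}$ and unitarity of $\widetilde{\Psi}'_t$.

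Finally, because $U\eta$ must lie in $\ker(V_t^{(2)*})$ for every $\eta\in\ker(V_t^{(1)*})$ and every $t\geq 0$ (so that $e(U\eta)\in F_2(t)$), the unitary $U$ intertwines the range projections $E^{(1)}_t=V_t^{(1)}V_t^{(1)*}$ and $E^{(2)}_t=V_t^{(2)}V_t^{(2)*}$; combined with $UV_s^{(1)}=V_s^{(2)}U$ this gives $UV_s^{(1)*}=V_s^{(2)*}U$, placing $U$ in $\mathscr{U}(V^{(1)},V^{(2)})$. Combining the formula for $\widetilde{\Psi}'_t$ with the definition of $M_t$ reconstructs
\[
\widetilde{\Psi}_t(e(\eta))=e^{i\lambda t}\,e^{-\|\xi_t\|^2/2-\langle U\eta|\xi_t\rangle}\,e(U\eta+\xi_t),
\]
and transferring back through $\Psi^{(1)},\Psi^{(2)}$ using Lemma \ref{isoccrcar} yields the claimed formula for $\Psi_t(Exp(\eta))$.

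The main obstacle is extracting the \emph{global} unitary $U$ (independent of $t$) from the family $\{\widetilde{\Psi}_t\}$ and verifying the adjoint intertwining. Multiplicativity of $\widetilde{\Psi}$ naturally gives linear maps on each $\ker(V_t^{(1)*})$, and the delicate point is to show that these maps are the restrictions of a single unitary $U:\mathcal{H}\to\mathcal{K}$ satisfying both $UV_s^{(1)}=V_s^{(2)}U$ and $UV_s^{(1)*}=V_s^{(2)*}U$. Purity of $V^{(1)}$ (so that $\bigcup_{t\geq 0}\ker(V_t^{(1)*})$ is dense in $\mathcal{H}$) and compatibility of the range projections drive this step.
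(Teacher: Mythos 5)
Your proposal is correct in substance and its first step (transporting $\Psi$ to an isomorphism $\widetilde{\Psi}\colon F_1\to F_2$ of the CCR product systems via Lemma \ref{isoccrcar}) is exactly what the paper does. Where you diverge is in how the structure of the CCR isomorphism is obtained. The paper first invokes Corollary 2.6.10 of \cite{arveson} to get a unitary $W$ intertwining $V^{(1)}$ and $V^{(2)}$, uses $W$ to build an isomorphism $\Lambda\colon F_1\to F_2$ by second quantisation, and then applies Arveson's gauge-group theorem (Thm.\ 3.8.4 of \cite{arveson}) to the \emph{automorphism} $T=\Lambda^{-1}\circ\widetilde{\Psi}$ of $F_1$; composing back gives the formula. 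You instead analyse $\widetilde{\Psi}$ directly: classify $\widetilde{\Psi}_t(\Omega_t)$ as a unit of $F_2$, strip off the associated Weyl cocycle, and recover $U$ from the resulting vacuum-preserving isomorphism. This amounts to reproving the ``isomorphism version'' of Thm.\ 3.8.4 rather than reducing to the automorphism version; it is more self-contained (you do not need Corollary 2.6.10 as a separate input, since the unitary equivalence of $V^{(1)}$ and $V^{(2)}$ falls out of your construction of $U$), but it obliges you to carry out the full decomposable-vector analysis that Arveson's theorem packages for you. Two small points of hygiene: if $M_t$ is the full Weyl cocycle $e(\zeta)\mapsto e^{i\lambda t}e^{-\|\xi_t\|^2/2-\langle\zeta|\xi_t\rangle}e(\zeta+\xi_t)$, then the vacuum-preserving part satisfies $\widetilde{\Psi}'_t(e(\eta))=e(U\eta)$ with no extra scalar (your factor $e^{-\langle U\eta|\xi_t\rangle}$ is already absorbed into $M_t$); and the linearity and isometry of $\eta\mapsto U\eta$ come from $\langle e(\eta)|e(\eta')\rangle=e^{\langle\eta|\eta'\rangle}$ and polarisation rather than from the one-particle comparison, which is what supplies the consistency relation needed to glue the maps on $\ker(V_t^{(1)*})$ into a single $U$. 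Also note that once $U$ is unitary, $UV^{(1)}_sU^{*}=V^{(2)}_s$ already forces $UV^{(1)*}_sU^{*}=V^{(2)*}_s$ by taking adjoints, so the adjoint intertwining is automatic and needs no separate argument via range projections.
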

  \begin{proof} We denote by $F_{1}$ and $F_{2}$ the product systems of the CCR flows associated with $V^{(1)}$ and $V^{(2)}$ respectively. For  $i=1,2$, let $\Theta_{i}=\{\Theta_{i}(t)\}_{t \geq 0}$ be the isomorphism from $E_{i}$ onto $F_{i}$ given by 
  \[\Theta_{i}(t)Exp(\eta)=e(\eta)\] for $\eta\in Ker(V^{(i)*}_{t})$ and $t\geq 0$. The isomorphism $\Theta_i$ for $i=1,2$ is guaranteed by Lemma \ref{isoccrcar}. 
  Then, $\Delta=\Theta_{2}\circ\Psi\circ \Theta_{1}^{-1}$ is an isomorphism from $F_{1}$ onto $F_{2}$.

  It follows from Corollary 2.6.10 of \cite{arveson} that  $V^{(1)}$ and $V^{(2)}$ are unitarily equivalent. Suppose $W:\mathcal{H}\to\mathcal{K}$ is a unitary such that $WV^{(1)}_{t}=V^{(2)}_{t}W$ for $t\geq 0$. For $t\geq 0$, let $\Lambda_t:F_1(t) \to F_2(t)$ be the unitary operator such that  \[\Lambda_{t} exp(\xi)=exp(W\xi)\] 
  for $\xi \in Ker(V^{(1)*}_t)$. Clearly,  $\Lambda:=\{\Lambda_{t}\}_{t\geq 0}:F_{1}\to F_{2}$ is an isomorphism.

 Now consider the map $T:=\Lambda^{-1}\circ\Delta$.  The map $T$ is an automorphism of $F_{1}$. By Thm. 3.8.4 of \cite{arveson}, there exists $U \in \mathscr{U}(V^{(1)},V^{(1)})$, an additive cocycle
 $\xi=\{\xi_{t}\}_{t\geq 0}$  for $V^{(1)}$ and $\lambda \in \bbr$ such that 
   \[T_{t} e(\eta)=e^{i\lambda t}e^{-\frac{||\xi_{t}||^{2}}{2}-\langle U\eta|\xi_{t}\rangle}e(U\eta+\xi_{t})\] 
 for $\eta \in Ker(V_t^{(1)*})$.
  For $t \geq 0$, set $\widetilde{\xi}_t:=W\xi_t$ and $\widetilde{U}:=WU$. Then, $\widetilde{U} \in \mathscr{U}(V^{(1)},V^{(2)})$ and $\widetilde{\xi}$ is an additive cocycle for $V^{(2)}$. 

  Since $\Delta:=\Lambda \circ T$, it follows that for   
   for $t\geq 0$ and $\eta\in Ker(V^{(1)*}_{t})$ ,
   \begin{align*}
\Delta_{t}e(\eta)&=e^{i\lambda t}e^{-\frac{||\xi_{t}||^{2}}{2}-\langle U\eta|\xi_{t}\rangle}e(WU\eta+W\xi_{t})\\
&=e^{i\lambda t}e^{-\frac{||W\xi_{t}||^{2}}{2}-\langle WU\eta|W\xi_{t}\rangle}e(WU\eta+\widetilde{\xi}_{t})\\
&=e^{i \lambda t}e^{-\frac{||\widetilde{\xi}_t||^{2}}{2}-\langle \widetilde{U}\eta|\widetilde{\xi}_t\rangle}e(\widetilde{U}\eta+\widetilde{\xi}_t).
\end{align*}

 Since $\Psi=\Theta_2^{-1} \circ \Delta \circ \Theta_1$, it is  immediate that
  \[\Psi_{t} Exp(\eta)=e^{i\lambda t}e^{\frac{-||\widetilde{\xi}_{t}||^{2}}{2}-\langle \widetilde{U}\eta|\widetilde{\xi}_{t}\rangle }Exp(\widetilde{U}\eta+\widetilde{\xi}_{t})\] for $\eta\in Ker(V^{(1)*}_{t})$ and $t\geq 0$. Hence the proof.
    \end{proof}
  
Let $P$ be a closed convex cone in $\mathbb{R}^d$ which is spanning and pointed.    Let $V^{(1)}$, $V^{(2)}$ be two pure isometric representations of $P$ on  Hilbert spaces $\mathcal{H}_1$ and $\mathcal{H}_2$ respectively. We denote the product systems of the corresponding CAR flows by $E^{(1)}$ and $E^{(2)}$ respectively.
\begin{theorem} \label{injectivityCAR} The product systems $E^{(1)}$ and $E^{(2)}$ are isomorphic if and only if $V^{(1)}$ and $V^{(2)}$ are unitarily equivalent.\end{theorem}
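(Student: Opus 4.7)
The ``if'' direction is formal: any unitary $U:\mathcal{H}_1\to\mathcal{H}_2$ with $UV^{(1)}_a=V^{(2)}_aU$ for every $a\in P$ restricts to a unitary $Ker(V^{(1)*}_a)\to Ker(V^{(2)*}_a)$, and the fibrewise second quantizations $\Gamma_a(U|_{Ker(V^{(1)*}_a)}):E^{(1)}(a)\to E^{(2)}(a)$ are easily checked to respect the CAR multiplication rule (\ref{multiplication}).  The real content is the converse.  My plan is to restrict a given isomorphism $\Psi:E^{(1)}\to E^{(2)}$ to every one-parameter sub-product system $\{E^{(1)}(ta)\}_{t\ge 0}$, apply the one-parameter Lemma \ref{isostructure} along each ray, and then glue the resulting ray-wise unitaries into a single unitary $U:\mathcal{H}_1\to\mathcal{H}_2$.

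As a preliminary step, I would normalize $\Psi$ so that it sends the vacuum unit $\Omega^{(1)}$ of $E^{(1)}$ to the vacuum unit $\Omega^{(2)}$ of $E^{(2)}$.  Applying Lemma \ref{isostructure} along every ray shows that $\Psi(\Omega^{(1)})$ is an exponential unit of $E^{(2)}$ of the form $\Psi(\Omega^{(1)})_x=e^{i\phi(x)-\|\eta_x\|^2/2}Exp(\eta_x)$ for some $\eta:P\to\mathcal{H}_2$ and some $\phi:P\to\mathbb{R}$; multiplicativity of the unit forces $\eta\in\mathcal{A}(V^{(2)})$ to be an additive cocycle and $\phi$ to be additive.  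A direct exponential-vector computation (using the cocycle identity and the orthogonality $V^{(2)*}_x\eta_x=0$) then shows that the Weyl-type prescription
\[\Phi_\eta(Exp(\alpha)):=e^{-\|\eta_x\|^2/2-\langle\alpha|\eta_x\rangle}Exp(\alpha+\eta_x),\quad \alpha\in Ker(V^{(2)*}_x),\]
extends to a product system automorphism of $E^{(2)}$, and so does the phase map $u\mapsto e^{i\phi(x)}u$.  Composing the inverse of their composition with $\Psi$, I may assume $\Psi(\Omega^{(1)})=\Omega^{(2)}$, and then Lemma \ref{isostructure} applied to the ray through a fixed $a\in P\setminus\{0\}$ collapses to
\[\Psi_{ta}(Exp(\eta))=Exp(U^a\eta),\quad \eta\in Ker(V^{(1)*}_{ta}),\ t\ge 0,\]
for a unique unitary $U^a:\mathcal{H}_1\to\mathcal{H}_2$ intertwining $V^{(1)}_{ta}$ with $V^{(2)}_{ta}$, which depends only on the ray through $a$.

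To glue these ray-wise unitaries, for $y\le x$ in $P$ and $\eta\in Ker(V^{(1)*}_y)$ I would use the identity $Exp_y(\eta)\cdot\Omega_{x-y}=Exp_x(\eta)$, which follows from Remark \ref{Additive decomposable vectors} because both sides are decomposable vectors in $E^{(1)}(x)$ with vacuum part $1$ and $1$-particle part $\eta$.  Applying the multiplicative $\Psi$ to both sides gives $Exp_x(U^y\eta)=Exp_x(U^x\eta)$, hence $U^y=U^x$ on $Ker(V^{(1)*}_y)$.  Since any two points $x,x'\in P$ are dominated by $x+x'$ and $Ker(V^{(1)*}_x)\cap Ker(V^{(1)*}_{x'})\subseteq Ker(V^{(1)*}_{x+x'})$, the prescription $U\eta:=U^x\eta$ for $\eta\in Ker(V^{(1)*}_x)$ yields a well-defined linear isometry on the dense subspace $\bigcup_{x\in P}Ker(V^{(1)*}_x)$ (dense by purity of $V^{(1)}$); running the same construction for $\Psi^{-1}$ shows it extends to a unitary $U:\mathcal{H}_1\to\mathcal{H}_2$.

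For the intertwining relation, for $a,x\in P$ and $\eta\in Ker(V^{(1)*}_x)$ a parallel decomposable-matching argument gives $\Omega_a\cdot Exp_x(\eta)=Exp_{a+x}(V^{(1)}_a\eta)$; applying $\Psi$ and the ray-wise formula yields $U^{a+x}V^{(1)}_a\eta=V^{(2)}_a U^x\eta$.  Since $V^{(1)}_a\eta\in Ker(V^{(1)*}_{a+x})$, and $U$ agrees with $U^{a+x}$ there and with $U^x$ on $Ker(V^{(1)*}_x)$, this reads $UV^{(1)}_a\eta=V^{(2)}_a U\eta$, and density upgrades it to $UV^{(1)}_a=V^{(2)}_a U$ on all of $\mathcal{H}_1$.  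I expect the main obstacle to be the preliminary normalization: verifying that $\Phi_\eta$ and the phase map are genuine product system automorphisms in the multi-parameter setting requires checking unitarity on each fibre, multiplicativity across fibres, and measurability, all of which ultimately reduce to the cocycle identity and the orthogonality of $\eta_x$ with $V^{(2)}_x\mathcal{H}_2$.
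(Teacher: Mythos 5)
Your ``if'' direction and the overall ray-wise strategy (restrict to rays, apply Lemma \ref{isostructure}, glue) match the paper's, but the converse as you have written it contains two genuine gaps, and the first one is fatal. The normalization step presupposes that the Weyl-type prescription $\Phi_\eta$ is an automorphism of $E^{(2)}$. If it were, then (composed with the phase map) it would carry the vacuum unit to the normalised unit $\{e^{-||\eta_x||^2/2}Exp_x(\eta_x)\}_{x\in P}$, i.e.\ the gauge group would act transitively on normalised exponential units. Theorem \ref{gauge computation}(2) (see Eq.\ \ref{intransitive}) shows this is \emph{false} for multiparameter CAR flows whenever $I$ has at least two elements: every automorphism fixes the vacuum unit up to a phase. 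This is exactly the error in the earlier published proof of this statement that the present paper is correcting. Concretely, your ``direct exponential-vector computation'' would need the cross-ray identity $Exp_x(\alpha)\cdot Exp_y(\beta)=Exp_{x+y}(\alpha+V^{(2)}_x\beta)$, which holds for CCR flows and for one-parameter CAR flows (Eq.\ \ref{ccr is car}) but fails for multiparameter CAR flows --- its failure is precisely why $Exp(\xi)$ need not be a unit for a general additive cocycle $\xi$ (Prop.\ \ref{additive cocycles that give units}).

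The gluing identities have the same underlying problem. The vector $Exp_x(\eta)$ depends on the \emph{direction} $x$, not only on $\eta$: the It\^{o} integral defining its $2$-particle component antisymmetrises with respect to the filtration $\{V_{tx}V_{tx}^*\}_{t\geq 0}$ and so carries a sign function that changes with the ray (Lemma \ref{second particle}; Lemma \ref{epsilon a=epsilon b} is exactly the statement that these signs coincide for different rays only when $Exp(\eta)$ happens to be a unit). Since right multiplication by $\Omega_{x-y}$ is just the inclusion $\Gamma_a(Ker(V^{(1)*}_y))\hookrightarrow\Gamma_a(Ker(V^{(1)*}_x))$, the vectors $Exp_y(\eta)\cdot\Omega_{x-y}$ and $Exp_x(\eta)$ agree in their $0$- and $1$-particle parts but in general differ in the $2$-particle part; two vectors are not identified by matching those two components, and $Exp_y(\eta)\cdot\Omega_{x-y}$ has no reason to be decomposable along the ray through $x$, so the bijection $\mathcal{A}_\Omega(x)\to D_\Omega(x)$ of Prop.\ \ref{properties of exponential} cannot be invoked to force equality. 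The same objection applies to $\Omega_a\cdot Exp_x(\eta)=Exp_{a+x}(V^{(1)}_a\eta)$. The paper's proof avoids both traps: it keeps the un-normalised ray-wise form from Lemma \ref{isostructure}, applies the multiplicative $\Psi$ to identities that are genuinely valid (products of $\Psi$-images), and extracts the relations $U_a\xi=U_{a+b}\xi$ and the intertwining only by comparing $0$- and $1$-particle components, leaving the higher components as unidentified remainder terms $q_a(t,\xi)$.
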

\begin{proof}  
Suppose $E^{(1)}$ and $E^{(2)}$ are isomorphic. Let $\Psi=\{\Psi_{a}\}_{a\in P}:E^{(1)}\to E^{(2)}$ be an isomorphism. Let $a\in Int(P)$. Consider the one parameter product system  $E_{a}^{(1)}:= \{E_{a}^{(1)}
(t)\}_{t \geq 0}$, where for $t \geq 0$, $E_{a}^{(1)}(t):=E^{(1)}(ta)$. Similarly,  consider the one parameter product system $E_{a}^{(2)}:= \{E_{a}^{(2)}(t)\}_{t \geq 0}$, where $E_{a}^{(2)}(t):=E^{(2)}(ta)$ for $t\geq 0$. 
Then, $\{\Psi_{ta}\}_{t \geq 0}:E_a^{(1)} \to E_a^{(2)}$ is an isomorphism.

By Lemma \ref{isostructure}, there exists a unitary $U_{a}:\mathcal{H}_1\to\mathcal{H}_2$ intertwining $\{V^{(1)}_{ta}\}_{t\geq 0}$ and $\{V^{(2)}_{ta}\}_{t\geq 0}$, an additive cocycle  $ \{\xi^{a}_{t}\}_{t\geq 0}$ of $\{V^{(2)}_{ta}\}_{t\geq 0}$ and $\lambda_{a}\in\mathbb{R}$ such that $\Psi_{ta}$ is of the form 
\begin{equation}\label{raywiseiso1}\Psi_{ta}Exp_{a}(\xi)=e^{i\lambda_{a}t} e^{-\frac{||\xi^{a}_{t}||^{2}}{2}}e^{-\langle U_{a}\xi|\xi^{a}_{t}\rangle}Exp_{a}(U_{a}\xi+\xi^{a}_{t}),
\end{equation} for $\xi\in Ker(V^{(1)*}_{ta}) $ and $t >0$. We will denote $\xi^{a}_{1}$ by $\xi_{a}$.
Hence, 
\begin{equation}
    \label{raywiseiso}
    \Psi_{a}Exp_{a}(\xi)=e^{i\lambda_{a}} e^{-\frac{||\xi_s||^{2}}{2}}e^{-\langle U_{a}\xi|\xi_a\rangle}Exp_{a}(U_{a}\xi+\xi_a)
\end{equation}
for $a \in Int(P)$ and $\xi \in Ker(V^{(1)*}_a)$. 

Let $a, b\in Int(P)$. Since $\Psi $ is an isomorphism, we have
\begin{equation*}\Psi_{a}Exp_{a}(\xi). \Psi_{b}Exp_{b}(\eta)=\Psi_{a+b}(Exp_{a}(\xi).Exp_{b}(\eta)),
\end{equation*} for $\xi\in Ker(V^{(1)*}_{a}) $ and $\eta\in Ker(V^{(1)*}_{b}) $. 

In particular,
 \begin{equation*}\Psi_{a+b}(Exp_{a}(0).Exp_{b}(0))=\Psi_{a}Exp_{a}(0).\Psi_{b}Exp_{b}(0),
 \end{equation*} 
 i.e.
 \[\Psi_{a+b}Exp_{a+b}(0)=e^{i(\lambda_a+\lambda_b)}e^{-\frac{(||\xi_{a}||^{2}+||\xi_{b}||^{2})}{2}}Exp_{a}(\xi_{a}).Exp_{b}(\xi_{b})\]
 \begin{equation} \label{inj1}
      e^{i\lambda_{a+b}}e^{-\frac{||\xi_{a+b}||^{2}}{2}}Exp_{a+b}(\xi_{a+b})=e^{i(\lambda_{a}+\lambda_{b})}e^{-\frac{(||\xi_{a}||^{2}+||\xi_{b}||^{2})}{2}}Exp_{a}(\xi_{a}).Exp_{b}(\xi_{b}).
 \end{equation}
 
Comparing the $0$-particle vectors in LHS and RHS of Eq. \ref{inj1}, we have
 \begin{equation}
     \label{scalar_one}
      e^{i\lambda_{a+b}-\frac{||\xi_{a+b}||^{2}}{2}}= e^{i(\lambda_{a}+\lambda_{b})-\frac{(||\xi_{a}||^{2}+||\xi_{b}||^{2})}{2}}.
 \end{equation}
Now, comparing the $1$-particle vectors of LHS and RHS in Eq. \ref{inj1},  we have 
 \begin{equation}
 \label{additive in injectivity}
 \xi_{a+b}=\xi_{a}+V^{(2)}_{a}\xi_{b}.
 \end{equation}
 
  Let $\xi\in Ker(V^{(1)*}_{a})$.
Consider the equation
\begin{equation}\label{xi}\Psi_{a+b}\xi =\Psi_{a}\xi. \Psi_{b}Exp_{b}(0).
\end{equation}

Considering the exponential map in the one parameter product system $E^{(1)}_a$, we have
\[Exp_{a}(\xi)=\sum_{n=0}^{\infty}x_{1}^{(n)}\] where for $r \in (0,1]$, $x_r^{(0)}=\Omega_{ra}$, $x_r^{(1)}= (1-V_{ra}^{(1)}V_{ra}^{(1)*})\xi$ and $x_r^{(n)}=\int_{0}^{r}x_t^{(n-1)}d\xi_{t}$. Here, for $t \in (0,1]$, $\xi_{t}=(1-V^{(1)}_{ta}V^{(1)*}_{ta})\xi$. 

Let $s \in \bbr$. Suppose 
\[Exp_{a}(s\xi)=\sum_{n=0}^{\infty}y_1^{(n)},\] where the summands are obtained via It\^{o} integration. 
A moment's reflection on the definition of the It\^{o} integral reveals that
\[y_{1}^{(n)}=s^{n}x_1^{(n)}\] for $n=0,1,2,\cdots$.

 Note that \begin{equation}\label{powerseries1}\Psi_a Exp_{a}(t\xi)=\sum_{n=0}^{\infty}\Psi_a y_1^{(n)}=\sum_{n=0}^{\infty}t^{n}\Psi_a x_1^{(n)}\end{equation} for $t \in \bbr$. Since the power series in Eq. \ref{powerseries1} is norm convergent for every $t \in \bbr$, it may be differentiated term by term, and the derivative is given by 
  \[\label{powerseries2}\frac{d}{dt}\Psi_a Exp_{a}(t\xi)=\sum_{n=1}^{\infty}nt^{n-1}\Psi_a( x_1^{(n)}).\]
  
Hence, 
 \begin{equation}
     \label{derivative}\Psi_{a}(\xi)=\frac{d}{dt}\Big|_{t=0}\Psi_{a}Exp_{a}(t\xi).
 \end{equation}
From Eq. \ref{raywiseiso} we have,
\begin{align*}
\frac{d}{dt}\Psi_{a}Exp_{a}(t\xi)
&=\frac{d}{dt} e^{i\lambda_{a}}e^{-\frac{||\xi_{a}||^{2}}{2}}e^{-t\langle U_{a}\xi|\xi_{a}\rangle}Exp_{a}(tU_{a}\xi+\xi_{a})\\
&=e^{i\lambda_{a}}e^{-\frac{||\xi_{a}||^{2}}{2}}\Big(Exp_{a}(tU_{a}\xi+\xi_{a})\frac{d}{dt}e^{-t\langle U_{a}\xi|\xi_{a}\rangle}+ e^{-t\langle U_{a}\xi|\xi_{a}\rangle}\frac{d}{dt}Exp_{a}(tU_{a}\xi+\xi_{a})\Big)\\
&=e^{i\lambda_{a}-\frac{||\xi_{a}||^{2}}{2}}e^{-t\langle U_{a}\xi|\xi_{a}\rangle}\Big(-\langle U_{a}\xi|\xi_{a}\rangle Exp_{a}(tU_{a}\xi+\xi_{a})+U_{a}\xi+q_{a}(t,\xi)\Big)
\end{align*} where the projection of $q_{a}(t,\xi)$ onto $0$-particle and $1$-particle space is zero.
By Eq. \ref{derivative},
\[\label{iso xi}\Psi_{a}(\xi)=\frac{d}{dt}\Big|_{t=0}\Psi_{a}Exp_{a}(t\xi)=e^{i\lambda_{a}-\frac{||\xi_{a}||^{2}}{2}}(-\langle U_{a}\xi|\xi_{a}\rangle Exp_{a}(\xi_{a})+U_{a}\xi+q_{a}(0,\xi)).\]

Similarly,
\begin{align*}\Psi_{a+b}(\xi)=e^{i\lambda_{a+b}-\frac{||\xi_{a+b}||^{2}}{2}}(-\langle U_{a+b}\xi|\xi_{a+b}\rangle Exp_{a+b}(\xi_{a+b})+U_{a+b}\xi+q_{a+b}(0,\xi))
\end{align*} 
where the projection of $q_{a+b}(0,\xi)$ onto $0$-particle and $1$-particle space is zero. 

Therefore, Eq. \ref{xi} implies,
\begin{align*}\label{inj2}&e^{i(\lambda_{a}+\lambda_{b})-\frac{(||\xi_{a}||^{2}+||\xi_{b}||^{2})}{2}}\Big(-\langle U_{a}\xi|\xi_{a}\rangle Exp_{a}(\xi_{a})+U_{a}\xi+q_{a}(0,\xi)\Big).Exp_{b}(\xi_{b})\\&=e^{i\lambda_{a+b}-\frac{||\xi_{a+b}||^{2}}{2}}\Big(-\langle U_{a+b}\xi|\xi_{a+b}\rangle Exp_{a+b}(\xi_{a+b})+U_{a+b}\xi+q_{a+b}(0,\xi)\Big).
\end{align*}
 Using Eq. \ref{scalar_one}, the above equation reduces to 
 \begin{equation}
\label{inj2}
\big(-\langle U_{a}\xi|\xi_{a}\rangle Exp_{a}(\xi_{a})+U_{a}\xi+q_{a}(0,\xi)\Big).Exp_{b}(\xi_{b})=-\langle U_{a+b}\xi|\xi_{a+b}\rangle Exp_{a+b}(\xi_{a+b})+U_{a+b}\xi+q_{a+b}(0,\xi).
 \end{equation}
 By equating the $0$-particle vectors in the above equation,  we  see that
 \begin{equation}\label{adhoc label}\langle U_{a}\xi|\xi_{a}\rangle=\langle U_{a+b}\xi|\xi_{a+b}\rangle. \end{equation}
  Equating the $1$-particle vectors in Eq. \ref{inj2}, we have
 \begin{align*}
 -\langle U_{a}\xi|\xi_{a}\rangle (\xi_a+V^{(2)}_a\xi_b)+U_{a}\xi=-\langle U_{a+b}\xi|\xi_{a+b}\rangle\xi_{a+b}+U_{a+b}\xi.
 \end{align*} Hence, by Eq. \ref{additive in injectivity} and by Eq. \ref{adhoc label}, 
 \begin{align*}U_{a}\xi=U_{a+b}\xi.
 \end{align*}
 Thus, if $\xi\in Ker( V^{(1)*}_{a})$ and $\xi\in Ker( V^{(1)*}_{b})$ for $a,b\in Int(P)$, $U_{a}\xi=U_{a+b}\xi=U_{b}\xi$. Since $ \bigcup_{a \in Int(P)}Ker(V_a^{(1)*})$ is dense in $\clh_1$, it follows that  there exists a unitary operator $U: \mathcal{H}_1\to\mathcal{H}_2$ such that  
 \[
 U\xi=U_a\xi
 \]if  $\xi \in Ker(V^{(1)*}_{a})$. 

 Similarly, considering the equation
  \begin{equation*}\Psi_{a+b}(Exp_{a}(0).\eta)=\Psi_{a}(Exp_{a}(0)).\Psi_{b}\eta
 \end{equation*} for $\eta \in Ker(V^{(1)*}_{b})$ and comparing $0$ and $1$-particle vectors as before, it is not difficult to see  that for $a,b \in Int(P)$ and $\eta \in Ker(V_b^{(1)*})$
 \begin{equation*} UV^{(1)}_{a}\eta=V^{(2)}_{a}U\eta.
 \end{equation*} 
 Since $\displaystyle \bigcup_{b \in Int(P)}Ker(V_b^{(1)*})$ is dense in $\clh_1$, it follows that $UV^{(1)}_a=V^{(2)}_aU$ for every $a \in Int(P)$. 
 As $Int(P)$ is dense in $P$, $U$ intertwines $V^{(1)}$ and $V^{(2)}$.  This completes the proof.
\end{proof}

\section{Type I CAR Flows associated with Isometric Representations with Commuting Range Projections}

In this section, we characterise pure isometric representations of $P$ with commuting range projections that give rise to type I CAR flows. Recall that 
a pure isometric representation $V$ of $P$ is said to have \textbf{commuting range projections} if  $\{V_{a}V_{a}^{*}: a \in P\}$ is a commuting family of projections. First, we recall the dynamical system that encodes all pure  isometric representations with commuting range projections. 
Let $\mathcal{C}(\mathbb{R}^{d})$ denote the set of closed subsets of $\mathbb{R}^{d}$ which we equip with the Fell topology. 
Let
\begin{align*}Y_{u}:&=\{A\in\mathcal{C}(\mathbb{R}^{d}): \textrm{ $-P+A\subset A$, $A\neq \emptyset$, $A\neq \bbr^d$}\},\\
X_{u}:&=\{A\in Y_{u}:\textrm{ $0\in A$}\}. \end{align*}
The space $Y_{u}$ is  locally compact, Hausdorff and second countable on which the group $\bbr^d$ acts.  The action is given by  the map
\[\mathbb{R}^{d}\times Y_{u}\ni (x,A)\to A+x\in Y_{u}.\]
Note that $X_u+P \subset X_u$ and $Y_u=\displaystyle \bigcup_{a \in P}(X_u-a)$. 
It was proved in \cite{Sundar_NYJM} that there is a bijective correspondence between the class of pure isometric representations of $P$ and the class of covariant representations of the dynamical system $(Y_u,\bbr^d)$. 

For us, the most important examples  of isometric representations are the `shift semigroups on $X_u$' that arise from invariant measures on $Y_u$. 
Let $\mu$ be a non-zero Radon measure on $Y_{u}$ which is also invariant under the above mentioned action of $\mathbb{R}^{d}$.  Define an isometric representation $V^{\mu}$ of $P$ on $L^{2}(X_{u},\mu)$ by
\[V^{\mu}_{a}f(A):=\begin{cases}f(A-a) & \textrm{~if~}A-a\in X_{u},\\
0 & \textrm{~if~}A-a \notin X_u.
\end{cases}\]
Clearly, $V^{\mu}$ has commuting range projections.
 
It follows from Lemma 3.8 and Lemma 3.9 of \cite{sundarkms} that the set $X_{u}\setminus (X_{u}+a)$ has compact closure for $a \in P$ and has positive measure if $a \in Int(P)$. It is now clear that $\{1_{X_{u}\setminus (X_{u}+a)}\}_{a\in P}$ is a non-zero additive cocycle for $V^{\mu}$. 

Recall that $P^*$ stands for the dual cone of $P$,  and $S(P^*):=\{\lambda \in P^*: ||\lambda||=1\}$. Suppose $\lambda\in S(P^{*})$. Define
\[H^{\lambda}:=\{x\in\mathbb{R}^{d}: \textrm{$\langle\lambda|x\rangle\leq 0$}\}.\] 
Define \[Y_{u}^{\lambda}:=\{H^{\lambda}+t\lambda:\textrm{ $t\in\mathbb{R}$}\}=\{H^{\lambda}+z: z \in \bbr^d\},\] and \[X_{u}^{\lambda}:=\{H^{\lambda}+t\lambda:\textrm{ $t\geq 0$}\}=Y_u^{\lambda} \cap X_u.\] Note that $X_{u}^{\lambda}$, $Y_{u}^{\lambda}\subset Y_{u}$ are closed. Also, observe that $Y_u^{\lambda}$ is $\bbr^d$-invariant and $X_u^{\lambda}+P \subset P$. 

Let $S:=\{S_{t}\}_{t\geq 0}$ be the one parameter shift semigroup on $L^{2}[0,\infty)$ defined by 
\[S_tf(x)=f(x-t)1_{[0,\infty)}(x-t).\]
For $\lambda\in S(P^{*})$, define the isometric representation $S^{\lambda}$ of $P$ on $L^{2}[0,\infty)$ by 
\[S^{\lambda}_{a}= S_{\langle\lambda|a\rangle}\] for $a\in P$.

  Let $\mu$ be a non-zero $\bbr^d$-invariant Radon measure on $Y_u$.   Denote the product system of the CAR flow associated with $V^{\mu}$ by $E^{\mu}$. For $a\in P$, let $\eta_{a}:=1_{X_{u}\setminus X_{u}+a}$ , let $\eta:=\{\eta_{a}\}_{a\in P}$, and let $Exp(\eta):=\{Exp_a(\eta_a)\}_{a \in P}$. 
\begin{lemma}\label{<-}
Suppose $supp(\mu)=Y^{\lambda}_{u}$ for some $\lambda \in S(P^*)$. Then, 
\begin{enumerate}[(i)]\item $V^{\mu}$ is unitarily equivalent to $S^{\lambda}$, and
\item $Exp(\eta)$ is a unit for $E^{\mu}$.\end{enumerate}
\end{lemma}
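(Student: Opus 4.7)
The plan is to parametrise the support $Y_u^\lambda$ of $\mu$ by a single real line via $\widetilde{\phi} \colon \bbr \to Y_u^\lambda$, $t \mapsto H^\lambda + t\lambda$, after which part (i) becomes a direct computation and part (ii) reduces to the one-parameter identity Eq.\ \ref{ccr is car}.

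For part (i), I would first verify that $\widetilde{\phi}$ is a continuous bijection with respect to the Fell topology on $Y_u^\lambda$, with $X_u^\lambda = \widetilde{\phi}([0,\infty))$. The identity $H^\lambda + z = H^\lambda + \langle\lambda|z\rangle\lambda$ shows that the $\bbr^d$-action on $Y_u^\lambda$ transports under $\widetilde{\phi}^{-1}$ to translation by $\langle\lambda|z\rangle$ on $\bbr$, and the map $z \mapsto \langle\lambda|z\rangle$ is surjective. Hence the pushforward of $\mu$ under $\widetilde{\phi}^{-1}$ is an $\bbr$-translation-invariant Radon measure, i.e.\ a positive multiple of Lebesgue measure. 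After rescaling, $(Uf)(t) := f(H^\lambda + t\lambda)$ defines a unitary $U \colon L^2(X_u, \mu) \to L^2[0,\infty)$. Since $(H^\lambda + t\lambda) - a = H^\lambda + (t - \langle\lambda|a\rangle)\lambda$, and this lies in $X_u$ precisely when $t \geq \langle\lambda|a\rangle$, a direct calculation gives $U V_a^\mu U^* = S_{\langle\lambda|a\rangle} = S_a^\lambda$.

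For part (ii), the unitary $U$ induces an isomorphism of product systems $E^\mu \cong E^{S^\lambda}$ preserving vacuum units and exponentials, and under it $\eta_a = 1_{X_u \setminus (X_u + a)}$ corresponds to $1_{[0,\langle\lambda|a\rangle)} \in Ker(S_a^{\lambda *})$. Since $S^\lambda$ is merely a reparametrisation of the one-parameter shift $S$, inspection of Eq.\ \ref{multiplication} shows that the multiplication in $E^{S^\lambda}(a) \otimes E^{S^\lambda}(b) \to E^{S^\lambda}(a+b)$ coincides with that of the one-parameter CAR flow $\beta^S$ at the parameters $(\langle\lambda|a\rangle, \langle\lambda|b\rangle)$. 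Applying Eq.\ \ref{ccr is car} with $\xi = 1_{[0,\langle\lambda|a\rangle)}$ and $\zeta = 1_{[0,\langle\lambda|b\rangle)}$ yields
\[
Exp_a(\eta_a) \cdot Exp_b(\eta_b) = Exp(\xi + S_{\langle\lambda|a\rangle}\zeta) = Exp(1_{[0,\langle\lambda|a+b\rangle)}) = Exp_{a+b}(\eta_{a+b}),
\]
so $Exp(\eta)$ is a unit. The only non-routine point in this plan is the observation that the multiparameter CAR multiplication for $S^\lambda$ is literally the one-parameter CAR multiplication along the single ray $\bbr\lambda$, which justifies invoking Eq.\ \ref{ccr is car}; everything else is a direct translation between the two pictures afforded by $\widetilde{\phi}$.
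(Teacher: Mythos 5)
Your proposal is correct and follows essentially the same route as the paper: the paper also identifies $Y_u^\lambda$ with $\bbr$ via $H^\lambda+t\lambda \mapsto t$, pushes $\mu$ forward to Lebesgue measure to obtain the intertwining unitary, and then verifies the unit property by recognising $Exp_a(\eta_a)$ as the value at $\langle\lambda|a\rangle$ of the exponential unit of the one-parameter shift. The only cosmetic difference is that the paper invokes the addit--unit correspondence (Remark \ref{rmk} and Prop.\ \ref{properties of exponential}) for the cocycle $\{1_{(0,t)}\}_{t\geq 0}$ where you invoke Eq.\ \ref{ccr is car}; these rest on the same one-parameter identity, and your explicit remark that the multiplication of $E^{S^\lambda}$ is literally the one-parameter CAR multiplication along the ray is precisely the point the paper uses implicitly.
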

\begin{proof}   We first show that $V^{\mu}$ is unitarily equivalent to $S^{\lambda}$.
The reader may verify that the map 
$\mathcal{F}: Y^{\lambda}_{u}\to \mathbb{R}$ defined by
\[\mathcal{F}(H^{\lambda}+t\lambda)=t\] is a homeomorphism, and $\mathcal{F}(X_u^{\lambda})=[0,\infty)$. Also, \[\mathcal{F}((H_\lambda+t\lambda)+a)=t+\langle \lambda|a \rangle\]
for $t \in \bbr$ and $a \in \bbr^d$. Since $\mu$ is an invariant measure, the push forward measure $\mathcal{F}_{*}\mu$ is the Lebesgue measure, denoted by $m$. 
  Define a unitary operator  $U: L^{2}([0,\infty),m)\to L^{2}(X_{u}^{\lambda},\mu)$ by
 \[Uf=f \circ \mathcal{F}.\] It is routine to verify that $U$ intertwines $S^{\lambda}$ and $V^{\mu}$. Note that $U^{-1}(1_{X_{u}\setminus X_{u}+a})=1_{(0,\langle \lambda|a\rangle)}$ for $a\in P$. By abusing notation, we may assume that $V^{\mu}=S^{\lambda}$ and $\eta_{a}=1_{(0,\langle \lambda|a\rangle)}$ for $a\in P$.

Let $Exp$ denote the exponential map of  the one parameter product system $\widetilde{E} $ of the CAR flow associated with $\{S_{t}\}_{t\geq 0}$.  Let $u_{t}=Exp(1_{(0,t)})$ for $t\geq 0$. It follows from Remark \ref{Additive decomposable vectors} and Prop. \ref{properties of exponential} that $u=\{u_t\}_{t \geq 0}$ is unit for $\widetilde{E}$.
Note that $Exp_{a}(\eta_{a})=u_{\langle\lambda|a\rangle}$ for $a\in P$. Observe that
\[Exp_{a}(\xi_{a}).Exp_{b}(\xi_{b})=u_{\langle\lambda|a\rangle}.u_{\langle\lambda|b\rangle}=u_{\langle\lambda|a+b\rangle}=Exp_{a+b}(\xi_{a+b}).\]  for $a, b\in P$, i.e. $Exp(\eta)$ is a unit.
\end{proof}
The following picture of $Y_{u}$ will be useful in the sequel. 
Suppose $\{v_{i}:\textrm{ $i=1,2,\cdots , d$}\}$ is a basis for $\mathbb{R}^{d}$ such that $v_{i}\in Int(P)$ for $i=1,2,\cdots , d$. For $i\in \{1,2,\cdots , d\}$, let \[Q_{i}:=span\{v_{j}: \textrm{$j\in \{1,2,\cdots , d\}$, $j\neq i$}\}.\]
Let $f:Q_{i}\to\mathbb{R}$ be continuous,  and let $a=\sum_{i=1}^{d}a_{i}v_{i}\in \bbr^d$. Define 
\[\Big(\Psi_{i}(a) f\Big)(\sum_{ j\neq i}x_{j}v_{j})=f(\sum_{j\neq i}(x_{j}-a_{j})v_{j})+a_{i}.\]
 We say that $f$ is decreasing if $\Big(\Psi_{i}(a)f\Big)(x)\leq f(x)$ for $x\in Q_{i}$ and $a\in -P$.
  Define \[\mathcal{F}_{i}:=\{f:Q_{i}\to\mathbb{R} : \textrm{ $f$ is continuous and decreasing}\}.\] 

Suppose $A\in Y_{u}$. Fix $i\in \{1,2,\cdots ,d\}$. By Lemma 4.3 of \cite{anbuCAR}, there exists a continuous   function $f^{A}_{i}$ such that 
\[A=\{\sum_{j=1}^{d}x_{j}v_{j}\in \mathbb{R}^{d}:\textrm{ $x_{i}\leq f^{A}_{i}( \sum_{k\neq i}x_{k}v_{k})$}\}.\]
For $a=\displaystyle{\sum_{j=1}^{d}}a_{j}v_{j}\in \bbr^{d}$, note that $f_i^{A+a}=\Psi_i(a)f_i^A$. Since $-P+A \subset A$, $f_i^A \in \mathcal{F}_i$.  For $A,B\in Y_{u}$, $f^{A}_{i}=f^{B}_{i}$ if and only if $A=B$. 

Conversely, if $f:Q_{i}\to\mathbb{R}$ is continuous and decreasing, the set
\[A_{f}:=\{\sum_{j=1}^{d}x_{j}v_{j}\in \mathbb{R}^{d}:\textrm{$x_{i}\leq f(\sum_{ k\neq i}^{d}x_{k}v_{k})$ }\}\] is in $Y_{u}$. Let $i\in \{1,2,\cdots d\}$. Clearly,  \[X_{u}=\{A\in Y_{u}: \textrm{$f^{A}_{i}(0)\geq 0$}\}.\] 

\begin{prop}
    
\label{pointwise convergence} Suppose $(A_{n})_{n\in\mathbb{N}}$ is a sequence in $Y_{u}$ such that $A_{n}\to A$ in $Y_{u}$. Then, the sequence $\{f^{A_{n}}_{i}\}_{n\in\mathbb{N}}$ converges pointwise to $f^{A}_{i}$ for each $i\in\{1,2,\cdots , d\}$.
\end{prop}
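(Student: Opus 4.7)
The plan is to reduce the statement to the two standard ``half'' convergence properties of the Fell topology on $\mathcal{C}(\mathbb{R}^d)$, namely
\begin{enumerate}
\item[(F1)] if $x_{n}\in A_{n}$ and $x_{n}\to x$, then $x\in A$;
\item[(F2)] if $x\in A$, then there exist $x_{n}\in A_{n}$ with $x_{n}\to x$,
\end{enumerate}
and to use the order-structure coming from $-P+A\subset A$ in order to ``project'' Fell-approximants onto the desired vertical line.

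Fix an index $i\in\{1,2,\dots ,d\}$ and a point $q\in Q_{i}$. Write $s:=f^{A}_{i}(q)$ and $s_{n}:=f^{A_{n}}_{i}(q)$; since $A$ and each $A_{n}$ are characterised by their $i$-th profile functions, membership of a point $q+tv_{i}$ in $A$ (resp.\ $A_{n}$) is equivalent to $t\le s$ (resp.\ $t\le s_{n}$). I will prove $\limsup s_{n}\le s$ and $\liminf s_{n}\ge s$ separately.

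For the upper bound, suppose towards a contradiction that $\limsup s_{n}>s$; choose $t$ with $\limsup s_{n}>t>s$. Along a subsequence one has $s_{n_{k}}>t$, so $q+tv_{i}\in A_{n_{k}}$ for every $k$. The constant sequence $q+tv_{i}$ trivially converges to $q+tv_{i}$, hence by (F1) we get $q+tv_{i}\in A$, i.e.\ $t\le s$, contradicting the choice of $t$.

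For the lower bound, fix $\varepsilon>0$. Since $s-\varepsilon/2<s=f^{A}_{i}(q)$, the point $y:=q+(s-\varepsilon/2)v_{i}$ lies in $A$. Applying (F2) yields a sequence $y_{n}\in A_{n}$ with $y_{n}\to y$. Decompose $y_{n}=q_{n}+t_{n}v_{i}$ with $q_{n}\in Q_{i}$; then $q_{n}\to q$ in $Q_{i}$ and $t_{n}\to s-\varepsilon/2$ in $\mathbb{R}$. Consider the vector
\[
w_{n}:=\bigl(q+(s-\varepsilon)v_{i}\bigr)-y_{n}=(q-q_{n})+\bigl(s-\varepsilon-t_{n}\bigr)v_{i}.
\]
As $n\to\infty$, $w_{n}\to -\tfrac{\varepsilon}{2}v_{i}$. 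Since $v_{i}\in\mathrm{Int}(P)$, we have $-\tfrac{\varepsilon}{2}v_{i}\in\mathrm{Int}(-P)$, and $\mathrm{Int}(-P)$ is open, so $w_{n}\in -P$ for all sufficiently large $n$. Using $-P+A_{n}\subset A_{n}$, this gives
\[
q+(s-\varepsilon)v_{i}=y_{n}+w_{n}\in A_{n}
\]
for large $n$, so $s_{n}\ge s-\varepsilon$ eventually and hence $\liminf s_{n}\ge s-\varepsilon$. Letting $\varepsilon\downarrow 0$ yields $\liminf s_{n}\ge s$, finishing the proof.

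The only subtle step is the lower bound: one cannot immediately read off $f^{A_{n}}_{i}(q)$ from the Fell-approximants $y_{n}$ because the $Q_{i}$-coordinate of $y_{n}$ need not equal $q$. The key device that bypasses this difficulty is the translation trick in the last display, which exploits $v_{i}\in\mathrm{Int}(P)$ to turn a small perturbation in $Q_{i}$ into a slight downward shift along $v_{i}$, landing inside $-P$.
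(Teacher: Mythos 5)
Your proof is correct, and it takes a somewhat different route from the paper's. The paper reduces everything to the fact (quoted from the proof of Prop.~II.13 of Hilgert--Neeb) that $1_{A_n}\to 1_A$ pointwise off $\partial A$, and then tests the two points $x+(f(x)\mp t)v_i$ --- one in $\mathrm{Int}(A)$, one in the open complement of $A$ --- directly on the vertical line through $x$. The assertion ``interior points of $A$ eventually belong to $A_n$'' is exactly what that citation supplies; note it is false for general Fell-convergent sequences of closed sets, so it genuinely uses the down-set structure $-P+A\subset A$. Your upper bound is essentially the same test in contrapositive form (a point strictly above the graph cannot lie in $A_{n}$ along a subsequence). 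Your lower bound, however, replaces the citation by a self-contained argument: you take Fell-approximants $y_n\to y$ of a point just below the graph, accept that they drift off the vertical line $q+\mathbb{R}v_i$, and correct for the drift by adding $w_n\to -\tfrac{\varepsilon}{2}v_i\in\mathrm{Int}(-P)$ and invoking $-P+A_n\subset A_n$. This translation trick is precisely the mechanism hidden inside the cited indicator-convergence result, so your version is more elementary and self-contained at the cost of a slightly longer computation. Two small points to tighten: in the upper bound you apply (F1) to a subsequence, so either state (F1) in its $\limsup$ (subsequential) form or remark that the subsequence $(A_{n_k})$ still Fell-converges to $A$; and the decomposition $y_n=q_n+t_nv_i$ with $q_n\to q$ and $t_n\to s-\varepsilon/2$ uses that $\{v_1,\dots,v_d\}$ is a basis of $\mathbb{R}^d$, which is worth saying explicitly.
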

\begin{proof} Fix $i\in \{1,2,\cdots ,d\}$.  Suppose $(A_{n})$ is a sequence in $Y_{u}$ converging to $A\in Y_{u}$. For simplicity, we denote $f_i^{A_{n}}$ and $f_i^{{A}}$ by $f_{n}$ and $f$ respectively. From the proof of  Prop. $II.13$ of \cite{hilgert&neeb}, $1_{A_{n}}(a)\to 1_{A}(a)$ pointwise for every $a \in \mathbb{R}^{d}\setminus\partial A$, where $\partial A$ is the  boundary of $A$.   Recall that 
\[
A=\{\sum_{i=1}^{d}x_{j}v_{j}\in \bbr^{d}: \textrm{  $x_{i}\leq f(\sum_{k\neq i}x_{k}v_{k})$}\}.\]

 Let $x\in Q_{i}$, and  $t>0$. Let $y=x+(f(x)-t)v_{i}$. Since $y\in Int(A)$, $y\in A_{n}$ eventually, i.e. for  $n\in\bbn$ sufficiently large, there exists $s_{n} \geq 0$ and $x_n \in Q_i$ such that  $y=x_{n}+(f_{n}(x)-s_{n})v_{i}$. Now,
\[f(x)-t=f_{n}(x)-s_{n}.\] for sufficiently large $n\in\mathbb{N}$. In particular, \[f_{n}(x)-f(x)\geq - t\] for large $n\in\mathbb{N}$. 

Let $z= x+(f(x)+t)v_{i}$. Then, $z\in \mathbb{R}^{d}\setminus A$. Therefore,  $z\in \mathbb{R}^{d}\setminus A_{n}$ eventually, i.e. for sufficiently large $n\in\mathbb{N}$, there exists $t_{n}>0$ and $z_n \in Q_i$  such that  $z=z_n+(f_{n}(x)+t_{n})v_{i}$. Now,
\[f(x)+t=f_{n}(x)+t_{n}\] for large $n\in\mathbb{N}$. Therefore, for large $n$, \begin{equation}
 f_{n}(x)-f(x)\leq t.
\end{equation}  Since $t>0$ is arbitrary,  we conclude that $f_{n}(x)\to f(x)$. This completes the proof.
\end{proof}

For $i\in\{1,2,\cdots , d\}$,  let $Y^{i}_{u}:=\{A\in Y_{u}: \textrm{$ f^{A}_{i}(0)=0$}\}$. Then, $Y^{i}_{u}$ is a closed subset of $Y_{u}$ by Lemma \ref{pointwise convergence}.
Define the map $\Psi_{i}:Y^{i}_{u}\times \mathbb{R} \to Y_u$ by
\[\Psi_i(A,t)= A+tv_{i}.\]
The map $\Psi_i$ is a homeomorphism with the inverse given by
$\Psi_{i}^{-1}(A)=(A-f^{A}_{i}(0)v_{i}, f_i^{A}(0))$. 
  Observe that \begin{equation}
      \label{equivariance of psi}
      \Psi_{i}(A,s+t)=\Psi_{i}(A,s)+tv_i
  \end{equation} for $A\in Y_{u}$, $s,t\in \bbr$. 
  
  \begin{remark}
  \label{full support}
  Let $i \in \{1,2,\cdots,d\}$, and let $X^{(i)}_{u+}:=\{A \in X_u: f_i^A(0)>0\}$. Then, $X^{(i)}_{u+}$ is open in $Y_u$ whose closure is $X_u$. 
  This is because $\Psi_i$ is a homeomorphism, $X_u=\Psi_i(Y_u^{(i)} \times [0,\infty))$ and $X^{(i)}_{u+}=\Psi_i(Y_u^{(i)}\times (0,\infty))$.
    \end{remark}

Let $\mu$ be an invariant, non-zero Radon measure on $Y_u$ which is fixed until further mention. The measure that we consider on $Y_u \times Y_u$ is the product measure $\mu \times \mu$. 
  \begin{lemma}\label{boundary null}  With the forgoing notation, \begin{enumerate}[(i)]\item for each $i\in\{1,2,\cdots ,d\}$, $Y^{i}_{u}\subset Y_{u}$ has measure zero,  and
\item the set $\mathcal{N}=\{(A,B): \textrm{$f^{A}_{i}(0)=f^{B}_{i}(0)$}\}\subset Y_{u}\times Y_{u}$ has measure zero.
\end{enumerate}
\end{lemma}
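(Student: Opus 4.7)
The plan is to exploit the homeomorphism $\Psi_i:Y_u^{i}\times \bbr\to Y_u$. By Eq.~(\ref{equivariance of psi}), $\Psi_i$ intertwines the standard translation on the $\bbr$-factor with the action $A\mapsto A+tv_i$ on $Y_u$. Since $\mu$ is $\bbr^{d}$-invariant, in particular it is invariant under $v_i$-translations, and so the pullback measure $\tilde{\mu}:=(\Psi_i^{-1})_*\mu$ on $Y_u^{i}\times\bbr$ is invariant under translations in the second coordinate.

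The core technical step is to decompose $\tilde{\mu}$ as a product $\nu\otimes m$, where $\nu$ is a Radon measure on $Y_u^{i}$ and $m$ is Lebesgue measure on $\bbr$. For a fixed compact $K\subset Y_u^{i}$, the assignment $F\mapsto \tilde{\mu}(K\times F)$ defines a translation-invariant Borel measure on $\bbr$ that is finite on compact sets (since $\tilde{\mu}$ is Radon and $\Psi_i$ is a homeomorphism). By the uniqueness of Haar measure on $\bbr$, this measure equals $\nu(K)\cdot m(\cdot)$ for some finite constant $\nu(K)\geq 0$. Extending $\nu$ via the Riesz representation theorem gives a Radon measure on $Y_u^{i}$ with $\tilde{\mu}=\nu\otimes m$.

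With the product decomposition in hand, (i) is immediate: $Y_u^{i}=\Psi_i(Y_u^{i}\times\{0\})$ has measure $\nu(Y_u^{i})\cdot m(\{0\})=0$ under $\tilde{\mu}$, hence $\mu(Y_u^{i})=0$. For (ii), the explicit inverse $\Psi_i^{-1}(A)=(A-f_i^{A}(0)v_i,\,f_i^{A}(0))$ shows that the second coordinate of $\Psi_i^{-1}(A)$ is precisely $f_i^{A}(0)$. Hence $(\Psi_i\times\Psi_i)^{-1}(\mathcal{N})$ is the set
\[
\{(A,s,B,t)\in Y_u^{i}\times\bbr\times Y_u^{i}\times\bbr: s=t\}.
\]
Under the product measure $(\nu\otimes m)\otimes(\nu\otimes m)$, Fubini's theorem yields measure zero because the diagonal $\{(s,t)\in\bbr^{2}: s=t\}$ has $(m\times m)$-measure zero.

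The main obstacle is rigorously establishing the decomposition $\tilde{\mu}=\nu\otimes m$. One must check that the set function $\nu$ defined above is a genuine Radon measure, verify the identity $\tilde{\mu}(E\times F)=\nu(E)\,m(F)$ on a sufficiently rich family of Borel rectangles, and dispose of the $\sigma$-finiteness/regularity technicalities needed to identify $\tilde{\mu}$ with the product measure. These are standard manipulations in the theory of translation-invariant Radon measures, facilitated by the fact that both $Y_u^{i}$ (as a closed subset of $Y_u$) and $\bbr$ are second countable, locally compact, and Hausdorff; one could equivalently invoke Weil's theorem, viewing $Y_u^{i}\times\bbr$ as the trivial $\bbr$-principal bundle over $Y_u^{i}$.
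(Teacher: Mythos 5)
Your proposal is correct and follows essentially the same route as the paper: both pass to the homeomorphism $\Psi_i$, use $\bbr^d$-invariance of $\mu$ to write the pulled-back measure as a product $\nu_0\times m$ with $m$ Lebesgue measure, deduce (i) from $m(\{0\})=0$, and deduce (ii) from Fubini (the paper phrases this by noting each slice $\{A: f_i^A(0)=t\}=Y_u^i+tv_i$ is a translate of the null set $Y_u^i$, which is equivalent to your diagonal argument). The extra care you take in justifying the product decomposition via uniqueness of Haar measure is a reasonable elaboration of a step the paper simply asserts.
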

\begin{proof}Fix $i\in\{1,2,\cdots , d\}$.  Let $\nu=(\Psi_{i}^{-1})_{*}\mu$. Since $\mu$ is invariant, it follows from Eq. \ref{equivariance of psi} that $\nu$ is invariant under the action of $\mathbb{R}$ on $Y^{i}_{u} \times \bbr$ given by $s.(A,t)=(A,s+t)$. Hence, $\nu$ is a product measure of the form $\nu_0 \times m$ where $m$ is the Lebesgue measure on $\bbr$. Now, $\mu(Y^{i}_{u})=\nu(Y^{i}_{u}\times\{0\})=0$. This proves $(i)$.

Fix $B\in Y_{u}$, and let $t:=f^{B}_{i}(0)$. Then, $1_{\mathcal{N}}(A,B)=1$ if and only if $f^{A}_{i}(0)=f^{B}_{i}(0)=t$. But $\{A \in Y_u: \textrm{$f_i^{A}(0)=t$}\}=Y^{i}_{u}+tv_{i}$ is a null set. Now $(ii)$ follows from  Fubini's theorem. 
\end{proof}
 We denote the projection from $ \Gamma_{a}(L^{2}(X_{u}))$ onto its $n$-particle space by $P_{n}$.
For $A,B\in Y_{u}$, define
\[\epsilon_{i}(A,B):= \begin{cases} 1 & \textrm{if } f^{A}_{i}(0)>f^{B}_{i}(0) ,\\
-1 & \textrm{if } f^{A}_{i}(0)<f^{B}_{i}(0),\\
0 &\textrm{
if } f^{A}_{i}(0)=f^{B}_{i}(0).
\end{cases}\]
 Consider the additive cocycle  $\eta=\{\eta_{a}\}_{a\in P}$ for $ V^{\mu}$, where $\eta_{a}=1_{X_{u}\setminus X_{u}+a}$ for $a\in P$.
  Let $Exp_{i}$ denote the exponential map of the one parameter product system $\{E^{\mu}(tv_{i})\}_{t\geq 0}$.
\begin{lemma}\label{second particle} For $i\in \{1,2,\cdots ,d\}$ and $t>0$ ,
\[P_{2}\Big(Exp_{i}(\eta_{tv_{i}})\Big)(A,B)=\frac{1}{\sqrt{2}}\epsilon_{i}(A,B)\eta_{tv_{i}}(A) \eta_{tv_{i}} (B)\] for almost every $(A,B)\in X_{u}\times X_{u}$.
\end{lemma}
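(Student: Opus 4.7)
The plan is to write down the It\^{o} sums $S_k$ that norm-converge to $P_2(Exp_i(\eta_{tv_i}))$ and identify their pointwise almost-everywhere limit with the right-hand side. First, I restrict attention to the ray $\{sv_i : s \geq 0\}$. Since $\eta=\{\eta_a\}_{a\in P}$ is an additive cocycle for $V^{\mu}$, Remark \ref{rmk}(2) gives that $\{\eta_{sv_i}\}_{s \geq 0}$ is an addit for the one-parameter product system $\{E^{\mu}(sv_i)\}_{s \geq 0}$. In particular, the coherent section of additive decomposable vectors with terminal value $\eta_{tv_i}$ is $b_s:=\eta_{sv_i}$, and the propagator is $b(s,s')=V^{\mu*}_{sv_i}\eta_{s'v_i}=\eta_{(s'-s)v_i}$. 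A direct calculation, using that $E^{\mu}_{sv_i}$ is multiplication by $1_{X_u+sv_i}$, yields
\[V^{\mu}_{sv_i}\,b(s,s')=E^{\mu}_{sv_i}\,\eta_{s'v_i}=1_{C_s\setminus C_{s'}},\qquad C_r:=X_u+rv_i,\]
and the family $\{C_r\}_{r\geq 0}$ is decreasing.

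Next, I expand the It\^{o} sum (Eq. \ref{ito integral}) using the CAR multiplication rule (Eq. \ref{multiplication}). With $r_j:=jt/k$, each summand $b_{r_j}\cdot b(r_j,r_{j+1})\cdot \Omega_{(t-r_{j+1})v_i}$ collapses to the two-particle vector $V^{\mu}_{r_jv_i}\,b(r_j,r_{j+1})\wedge b_{r_j}$, so that
\[S_k=\sum_{j=0}^{k-1}1_{C_{r_j}\setminus C_{r_{j+1}}}\wedge 1_{C_0\setminus C_{r_j}}\in \Gamma_a^{(2)}(Ker(V^{\mu*}_{tv_i})).\]
Under the standard identification $f\wedge g\leftrightarrow \frac{1}{\sqrt{2}}(f\otimes g-g\otimes f)\in L^2(X_u\times X_u,\mu\times\mu)$, and using that for $A\in X_u$ one has $A\in C_r$ iff $f_i^A(0)\geq r$ (which follows from $f_i^{A+rv_i}(0)=f_i^A(0)+r$), the function $S_k(A,B)$ becomes an explicit alternating sum of products of indicators of $r$-intervals applied to $\phi(A):=f_i^A(0)$ and $\phi(B)$.

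For any fixed $(A,B)$ with $\phi(A)\neq \phi(B)$, at most one index $j$ contributes to each of the two sums. A brief case analysis (on whether $\phi(A),\phi(B)$ lie in $[0,t)$ or are $\geq t$), together with $r_{\lfloor k\phi(A)/t\rfloor}\to\phi(A)$ as $k\to\infty$, shows that
\[\lim_{k\to\infty}S_k(A,B)=\frac{1}{\sqrt{2}}\,\epsilon_i(A,B)\,\eta_{tv_i}(A)\,\eta_{tv_i}(B).\]
By Lemma \ref{boundary null}(ii), the exceptional set $\{\phi(A)=\phi(B)\}$ is $(\mu\times\mu)$-null, so this pointwise convergence holds almost everywhere.

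Finally, the sequence $(S_k)_k$ also converges in norm to $P_2(Exp_i(\eta_{tv_i}))$ by the norm convergence of It\^{o} sums (Prop. 2 of \cite{anbuCAR}). Passing to an $L^2$-convergent subsequence that also converges $(\mu\times\mu)$-a.e. identifies the two limits, giving the claim. The main technical point is Step 2: correctly tracking the shift $V^{\mu}_{r_jv_i}$ that the CAR multiplication applies to the second factor, which turns each summand into the clean wedge $1_{C_{r_j}\setminus C_{r_{j+1}}}\wedge 1_{C_0\setminus C_{r_j}}$ and produces the antisymmetry responsible for the $\epsilon_i$-factor in the limit.
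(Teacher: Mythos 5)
Your proposal is correct and follows essentially the same route as the paper: both expand the It\^{o} sums for the two-particle term via the CAR multiplication rule, reduce each summand to a wedge of indicator functions of the sets $X_u+r v_i$ (your $1_{C_{r_j}\setminus C_{r_{j+1}}}\wedge 1_{C_0\setminus C_{r_j}}$ is exactly the paper's $(\eta_{(j+1)t/n}-\eta_{jt/n})\wedge\eta_{jt/n}$), compute the pointwise limit off the $(\mu\times\mu)$-null diagonal $\{f_i^A(0)=f_i^B(0)\}$ by the same case analysis, and identify this with the norm limit. No gaps.
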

\begin{proof}
The computation is similar to the computation done in Prop. 5 of \cite{anbuCAR}. Fix $t>0$ and $i\in\{1,2,\cdots , d\}$. For simplicity, write $\eta_{sv_{i}}=\eta_{s}$ for $s\geq 0$. Let $x_{s}^{(1)}=\eta_{s}$. 

 By definition,
\begin{align}\label{secondparticle1}  P_{2}\Big(Exp_{i}(\eta_{tv_{i}})\Big)&=\int_{0}^{t}x_{s}^{(1)}d\eta_{s} \\
&=\lim_{n\to\infty} \sum_{j=0}^{n-1}\eta_{\frac{jt}{n}}.\eta_{\frac{t}{n}}\nonumber\\
&=\lim_{n\to\infty} \sum_{j=0}^{n-1}V_{\frac{jtv_{i}}{n}}\eta_{\frac{t}{n}}\wedge
\eta_{\frac{jt}{n}}\nonumber\\
&=\lim_{n\to\infty} \sum_{j=0}^{n-1}(\eta_{\frac{(j+1)t}{n}}-\eta_{\frac{jt}{n}})\wedge
\eta_{\frac{jt}{n}}\nonumber\\
\label{sec particle 1_1}&=\lim_{n\to\infty} \sum_{j=0}^{n-1}\eta_{\frac{(j+1)t}{n}}\wedge
\eta_{\frac{jt}{n}}.\end{align}
Let $s_{n}= \displaystyle \sum_{j=0}^{n-1}\eta_{\frac{(j+1)t}{n}}\wedge
\eta_{\frac{jt}{n}}$ for $n\in\bbn$. The proof will be over if we show that
\[\lim_{n\to\infty }s_{n}(A,B)=\frac{1}{\sqrt{2}}\epsilon_i(A,B)\eta_{tv_i}(A)\eta_{tv_i}(B)\] for almost every $(A,B)\in X_{u}\backslash(X_u+tv_i) \times X_u\backslash(X_u+tv_i)$. 

For $n\in\bbn$, suppose $A,B\in X_{u}$ are such that $f^{A}_{i}(0)<\frac{jt}{n}$, $\frac{jt}{n} \leq f^{B}_{i}(0)<  \frac{(j+1)t}{m}$ for some $j\in\{1,2,\cdots ,n-1\}$. Then, 
\begin{align*}s_{n}(A,B)
= 1_{X_{u}\setminus x_{u}+(\frac{(j+1)tv_{i}}{n})}\wedge 1_{X_{u}\setminus x_{u}+(\frac{jtv_{i}}{n})}(A,B).
\end{align*}
Since $A \in X_u\backslash (X_u+\frac{jtv_i}{n})$ and $B \in (X_u+\frac{jtv_i}{n})\backslash(X_u+\frac{(j+1)tv_i}{n})$, 
\begin{equation}
\label{value of sn}
s_n(A,B)=1_{X_{u}\setminus x_{u}+(\frac{(j+1)tv_{i}}{n})}\wedge 1_{X_{u}\setminus x_{u}+(\frac{jtv_{i}}{n})}(A,B)=-\frac{1}{\sqrt{2}}.
\end{equation}

Let $A,B\in X_{u}\setminus X_{u}+tv_{i}$. Suppose $f^{A}_{i}(0)<f^{B}_{i}(0)$.  For sufficiently large $n\in\mathbb{N}$, there exists a unique $j_{n}\in\{1,2,\cdots , n-1\}$  such that $f^{A}_{i}(0)< \frac{j_{n}t}{n}$, $\frac{j_{n}t}{n}\leq f^{B}_{i}(0)<  \frac{(j_{n}+1)t}{n}$. By Eq. \ref{value of sn}, for sufficiently large $n$, 
\begin{align*}\label{sec particle 2}s_n(A,B)&=-\frac{1}{\sqrt{2}} \\
&=\frac{1}{\sqrt{2}}\Big(\epsilon_{i}(A,B)\eta_{tv_{i}}(A) \eta_{tv_{i}}(B)\Big).\end{align*} 
Similarly, it may be proved that, when $f^A_i(0)>f_i^B(0)$, for $n$ sufficiently large, 
\[
s_n(A,B)=\frac{1}{\sqrt{2}}\epsilon_i(A,B)\eta_{tv_i}(A)\eta_{tv_i}(B). 
\]
The result follows  from Lemma \ref{boundary null}. The proof is complete. 
\end{proof}

\begin{lemma}\label{epsilon a=epsilon b} Suppose $\{Exp_{a}(1_{X_{u}\setminus X_{u}+a})\}_{a\in P}$ is a unit for $E^{\mu}$. Then, for every $i,j\in \{1,2,\cdots, d\}$, $\epsilon_{i}(A,B)=\epsilon_{j}(A,B)$ for almost every $(A,B)\in
X_{u}\times X_{u}$.
\end{lemma}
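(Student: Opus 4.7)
The plan is to exploit the commutativity of the CAR product forced by the unit property: since $u_a \cdot u_b = u_{a+b}=u_b \cdot u_a$ for all $a,b \in P$, where $u_a := Exp_a(\eta_a)$, one has in particular
\[u_{tv_i}\cdot u_{tv_j}=u_{tv_j}\cdot u_{tv_i}\]
for every $t>0$ and $i,j\in\{1,2,\ldots,d\}$. Taking $2$-particle components of both sides and restricting to a region on which the mixed terms vanish should reduce this identity to the pointwise statement $\epsilon_i=\epsilon_j$.

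First, using the CAR multiplication rule (Eq. \ref{multiplication}) together with the particle decomposition $u_{tv_i}=\Omega_{tv_i}+\eta_{tv_i}+P_2(u_{tv_i})+\cdots$, the $2$-particle parts work out to
\begin{align*}
P_2(u_{tv_i}\cdot u_{tv_j}) &= P_2(u_{tv_i})+V_{tv_i}\eta_{tv_j}\wedge \eta_{tv_i}+(V_{tv_i}\otimes V_{tv_i})P_2(u_{tv_j}),\\
P_2(u_{tv_j}\cdot u_{tv_i}) &= P_2(u_{tv_j})+V_{tv_j}\eta_{tv_i}\wedge \eta_{tv_j}+(V_{tv_j}\otimes V_{tv_j})P_2(u_{tv_i}),
\end{align*}
where $V\otimes V$ acts on the $2$-particle subspace by $\xi_1\wedge\xi_2\mapsto V\xi_1\wedge V\xi_2$. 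The unit property forces these two expressions to coincide.

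Next I would restrict the identity to the region $R(t)\times R(t)$, where
\[R(t):=(X_u\setminus(X_u+tv_i))\cap (X_u\setminus(X_u+tv_j))=\{A\in X_u: f_i^A(0)<t,\ f_j^A(0)<t\}.\]
For $(A,B)\in R(t)\times R(t)$ neither $A$ nor $B$ lies in $X_u+tv_i$ or in $X_u+tv_j$, so from the definition of the shift $V_{tv_i}$ the functions $V_{tv_i}\eta_{tv_j}$ and $V_{tv_j}\eta_{tv_i}$ vanish at both $A$ and $B$, killing the two wedge terms. The same support argument makes $(V_{tv_i}\otimes V_{tv_i})P_2(u_{tv_j})$ and $(V_{tv_j}\otimes V_{tv_j})P_2(u_{tv_i})$ vanish at $(A,B)$. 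The identity therefore collapses to $P_2(u_{tv_i})(A,B)=P_2(u_{tv_j})(A,B)$, and since $\eta_{tv_i}$ and $\eta_{tv_j}$ are identically $1$ on $R(t)$, Lemma \ref{second particle} reads this as $\tfrac{1}{\sqrt{2}}\epsilon_i(A,B)=\tfrac{1}{\sqrt{2}}\epsilon_j(A,B)$. Thus $\epsilon_i=\epsilon_j$ a.e.\ on $R(t)\times R(t)$.

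Finally I would send $t\to\infty$. As $f_i^A:Q_i\to\bbr$ is real-valued and continuous for every $A\in Y_u$ by the construction preceding Lemma \ref{pointwise convergence}, each $f_i^A(0)$ is finite, so $\bigcup_{t>0}R(t)=X_u$. Hence $\epsilon_i(A,B)=\epsilon_j(A,B)$ for almost every $(A,B)\in X_u\times X_u$. I expect the main obstacle to be the careful bookkeeping that confirms the three ``cross'' terms really do vanish on $R(t)\times R(t)$; once that is in hand, everything reduces to a direct application of Lemma \ref{second particle} and the multiplication rule for the CAR product system.
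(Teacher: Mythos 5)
Your proposal is correct and follows essentially the same route as the paper: the paper likewise equates the $2$-particle components of $Exp_i(\eta_{sv_i})\cdot Exp_j(\eta_{tv_j})$ and $Exp_j(\eta_{tv_j})\cdot Exp_i(\eta_{sv_i})$, restricts to the region $L_{i,s}\cap L_{j,t}=\bigl(X_u\setminus(X_u+sv_i)\bigr)^2\cap\bigl(X_u\setminus(X_u+tv_j)\bigr)^2$ where the cross terms vanish and Lemma \ref{second particle} identifies the surviving terms with $\tfrac{1}{\sqrt 2}\epsilon_i$ and $\tfrac{1}{\sqrt 2}\epsilon_j$, and then exhausts $X_u\times X_u$ by the countable union over $s,t\in\mathbb{N}$. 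Your use of a single parameter $t$ in place of the two parameters $s,t$ is an inessential simplification, and your exhaustion $\bigcup_{t}R(t)\times R(t)=X_u\times X_u$ (taken over $t\in\mathbb{N}$ so that the a.e.\ statements combine) is the same covering argument.
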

\begin{proof}   Write $V^{\mu}=V$, and let $\eta_{a}=1_{X_{u}\setminus (X_{u}+a)}$ for $a\in P$. Let $i,j \in \{1,2,\cdots,d\}$ be given. Since $Exp(\eta)$ is a unit,
\begin{align}\label{unitexp1}Exp_{i}(\eta_{sv_{i}}).Exp_{j}(\eta_{tv_{j}})=Exp_{j}(\eta_{tv_{j}}).Exp_{i}(\eta_{sv_{i}})\end{align} for $i,j\in\{1,2,\cdots , d\}$ and $s,t> 0$.

Hence,
\begin{align}\label{equality of sec particles}P_{2}\Big(Exp_{i}(\eta_{sv_{i}}).Exp_{j}(\eta_{tv_{j}})\Big)=P_{2}\Big(Exp_{j}(\eta_{tv_{j}}).Exp_{i}(\eta_{sv_{i}})\Big)\end{align}for $i,j\in\{1,2,\cdots , d\}$ and $s,t> 0$.

Thanks to Lemma \ref{second particle}, 
\begin{align*}P_{2}\Big(Exp_{i}(\eta_{sv_{i}}).Exp_{j}(\eta_{tv_{j}})\Big)(A,B)&=\epsilon_i(A,B)
\eta_{sv_{i}}(A) \eta_{sv_{i}}(B)+V_{sv_{i}}\eta_{tv_{j}}\wedge \eta_{sv_{i}}(A,B)\\&+\epsilon_{j}(A-sv_i,B-sv_i)V_{sv_{i}}\eta_{tv_{j}}(A) V_{sv_{i}}\eta_{tv_{j}}(B)\end{align*} for almost every $(A,B)\in X_{u}\times X_{u}$.  Similarly, 
\begin{align*}P_{2}\Big(Exp_{j}(\eta_{tv_{j}}).Exp_{i}(\eta_{sv_{i}})\Big)(A,B)&=\epsilon_j(A,B)
\eta_{tv_{j}}(A) \eta_{tv_{j}}(B)+V_{tv_{j}}\eta_{sv_{i}}\wedge \eta_{tv_{j}}(A,B)\\&+\epsilon_{i}(A-tv_j,B-tv_j)V_{tv_{j}}\eta_{sv_{i}}(A) V_{tv_{j}}\eta_{sv_{i}}(B)\end{align*} for almost every $(A,B)\in X_{u}\times X_{u}$.

For $s>0$ and $i\in\{1,2,\cdots , d\}$, let $L_{i,s}:=(1_{X_{u}}\setminus X_{u}+sv_{i})\times (1_{X_{u}}\setminus X_{u}+sv_{i})$. Let $s,t >0$. For almost all $(A,B)\in L_{i,s}\cap L_{j,t}$, \[P_{2}\Big(Exp_{i}(\eta_{sv_{i}}).Exp_{j}(\eta_{tv_{j}})\Big)=\epsilon_{i}(A,B)\] and 
\[P_{2}\Big(Exp_{j}(\eta_{tv_{j}}).Exp_{i}(\eta_{sv_{i}})\Big)=\epsilon_{j}(A,B).\]
Therefore,  Eq. \ref{equality of sec particles} implies,  
\[\epsilon_{i}(A,B)=\epsilon_{j}(A,B)\] for almost every $(A,B)\in L_{i,s}\cap L_{j,t}$. Now observe that $X_{u}\times X_{u}=\displaystyle{\bigcup_{m,n\in\mathbb{N}}}L_{i,m}\cap L_{j,n}$. Hence the proof follows.
\end{proof}
\textbf{Notation: }  Let  $X:=supp(\mu) \cap X_u$. Recall that $Y_u\backslash supp(\mu)$ is the largest open set that has $\mu$ measure zero. 
Since $\mu$ is invariant, $supp(\mu)$ is $\bbr^d$-invariant. It follows from Remark \ref{full support} that $\mu|_{X}$ has full support. 

\begin{prop}\label{total order} Suppose $\{Exp_{a}(1_{X_{u}\setminus X_{u}+a})\}_{a\in P}$ is a unit for $E^{\mu}$. Then, for  $A,B\in supp(\mu)$, $A\subset B$ or $B\subset A$.
\end{prop}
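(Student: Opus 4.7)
My plan is to upgrade the almost-everywhere statement of Lemma \ref{epsilon a=epsilon b} to a pointwise statement on the support of $\mu\times\mu$ by using continuity, transfer it to $A\cap B$ via the $\bbr^d$-invariance of $\mathrm{supp}(\mu)$, and finally derive a contradiction from $A\not\subset B$ and $B\not\subset A$ by exhibiting a point of $A\cap B$ with mixed sign pattern.

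Because $A\mapsto f_i^A(0)$ is continuous on $Y_u$ by Prop.~\ref{pointwise convergence}, the function $\phi_i(A,B):=f_i^A(0)-f_i^B(0)$ is continuous on $Y_u\times Y_u$, so $N_{ij}:=\{(A,B)\in X_u\times X_u:\phi_i(A,B)\phi_j(A,B)<0\}$ is open in $X_u\times X_u$. Lemma \ref{epsilon a=epsilon b} gives $(\mu\times\mu)(N_{ij})=0$, and Remark~\ref{full support} identifies $\mathrm{supp}(\mu|_{X_u})$ with $X:=\mathrm{supp}(\mu)\cap X_u$; hence $N_{ij}\cap(X\times X)=\emptyset$, so for every $(A',B')\in X\times X$ and all $i,j$, the numbers $\phi_i(A',B')$ and $\phi_j(A',B')$ share a common weak sign. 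Now fix $A,B\in\mathrm{supp}(\mu)$; the set $A\cap B$ is non-empty because each of $A,B$ contains a translate of $-P$. For any $a\in A\cap B$ the pair $(A-a,B-a)$ lies in $X_u\times X_u$ (since $0\in A-a$ and $0\in B-a$) and in $\mathrm{supp}(\mu)\times\mathrm{supp}(\mu)$ by invariance, hence in $X\times X$. A direct computation using $f_i^{A-a}(z)=f_i^A(z+a_{Q_i})-a_i$, where $a_{Q_i}:=\sum_{k\ne i}a_kv_k$ denotes the projection of $a$ onto $Q_i$, yields
\[
\phi_i(A-a,B-a)=f_i^A(a_{Q_i})-f_i^B(a_{Q_i})=:\psi_i^{AB}(a),
\]
so by the previous conclusion the $d$-tuple $(\psi_1^{AB}(a),\dots,\psi_d^{AB}(a))$ has a common weak sign for every $a\in A\cap B$.

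For the contradiction, assume $A\not\subset B$ and $B\not\subset A$. Since $A$ is the closure of its interior $\{x:x_i<f_i^A(x_{Q_i})\}$ (which is non-empty because $f_i^A$ is real-valued and continuous) and $B^c$ is open, $\mathrm{int}(A)\cap B^c$ is a non-empty open subset of $\bbr^d$, and similarly for $\mathrm{int}(B)\cap A^c$. Pick $x$ in the first and $y$ in the second; then for every $i$, $f_i^A(x_{Q_i})>x_i>f_i^B(x_{Q_i})$ and $f_i^A(y_{Q_i})<y_i<f_i^B(y_{Q_i})$, so $\psi_i^{AB}(x)>0$ and $\psi_i^{AB}(y)<0$ for every $i$. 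Each $\psi_i^{AB}$ depends on its argument only through the projection onto $Q_i$, so is invariant under translation by $v_i$; moreover $A\cap B$ is closed under subtracting any element of $P$. Combining these facts with the continuity of the $\psi_k^{AB}$'s and the fact that $A\cap B$ contains a full translate of $-P$, one reduces to a two-dimensional argument on an affine $2$-plane $\Pi=p+\mathrm{span}\{v_i,v_j\}$ through a base point $p\in A\cap B$: on $\Pi$ the restrictions $\psi_i^{AB}|_\Pi$ and $\psi_j^{AB}|_\Pi$ depend on the $v_j$- and $v_i$-coordinates respectively and the region $\Pi\cap(A\cap B)$ is unbounded below in both variables, so the previous paragraph's product-sign constraint together with the existence of the witnesses $x$ and $y$ forces a point $a^*\in A\cap B$ at which some $\psi_i^{AB}(a^*)>0$ while some $\psi_j^{AB}(a^*)<0$, contradicting the conclusion of the second step.

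The main technical obstacle is the last step: producing a single $a^*\in A\cap B$ that simultaneously witnesses a positive $\psi_i^{AB}$ and a negative $\psi_j^{AB}$. The two-dimensional case is essentially immediate because $\psi_1^{AB},\psi_2^{AB}$ become functions of disjoint scalar coordinates and $A\cap B$ contains the lower-left quadrant over which both vary freely; for larger $d$ one must carefully choose the plane (equivalently, the values $c_k$ for $k\ne i,j$ in the $v_k$-coordinates) in such a way that both $\{\bar\psi_i^{AB}>0\}$ and $\{\bar\psi_j^{AB}<0\}$ intersect the `lower region' $\Pi\cap(A\cap B)$, which is where the $v_i$-invariance of $\psi_i^{AB}$ and the availability of the interior points $x,y$ from the contradiction hypothesis do the work.
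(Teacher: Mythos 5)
Your first two steps are sound and coincide with how the paper's proof begins: the sets $\{(A,B):\epsilon_i(A,B)\epsilon_j(A,B)<0\}$ are open by Prop.~\ref{pointwise convergence}, null by Lemma~\ref{epsilon a=epsilon b}, hence disjoint from $X\times X$ because $\mu|_X$ has full support, and translating by a common $a\in A\cap B$ turns this into the statement that the tuple $\bigl(\psi_1^{AB}(a),\dots,\psi_d^{AB}(a)\bigr)$ never contains two entries of strictly opposite sign. The gap is the last step: for $d\ge 3$ that information is provably too weak to force $A\subset B$ or $B\subset A$, so no choice of $2$-plane can rescue the argument. Concretely, take $d=3$, $P=\bbr_+^3$, $v_i=e_i$, and let $A=\{z\le -x-y+h_A(x,y)\}$, $B=\{z\le -x-y+h_B(x,y)\}$, where $h_A,h_B\ge 0$ are small bumps with Lipschitz constant less than $1$ supported near $(0,0)$ and near $(10,10)$, so that the two sets differ from the half-space only near the boundary points $(0,0,0)$ and $(10,10,-20)$, whose coordinates are pairwise far apart. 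Then $A\not\subset B$ and $B\not\subset A$, yet $\psi_i^{AB}(a)>0$ forces $a_k$ to be close to the $k$-th coordinate of $(0,0,0)$ for every $k\ne i$, while $\psi_j^{AB}(a)<0$ forces $a_k$ to be close to the $k$-th coordinate of $(10,10,-20)$ for every $k\ne j$; since some $k$ lies outside $\{i,j\}$, no point of $\bbr^3$ at all (let alone of $A\cap B$) carries two $\psi$'s of strictly opposite sign. So your derived constraint is satisfied while the conclusion fails.

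What your argument discards is the information coming from translating $A$ and $B$ by \emph{different} vectors: your step 1 applies to every pair $(A-a,B-b)\in X\times X$ with $a\in A$, $b\in B$, not only to $a=b$, and this extra freedom is exactly what the paper's proof uses in the two stages you omit. First, the set $M_{i,j}=\{(A',B')\in X\times X: f_i^{A'}(0)=f_i^{B'}(0),\ f_j^{A'}(0)>f_j^{B'}(0)\}$ is shown to be empty by translating $A'$ by $sv_i$ and $B'$ by $tv_i$ with $0<s<t$ small, which pushes the pair into the empty set $N_{i,j}$; this upgrades ``no strictly opposite signs'' to the genuine biconditional $f_i^{A'}(0)\le f_i^{B'}(0)\iff f_j^{A'}(0)\le f_j^{B'}(0)$ on $X\times X$. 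Second, that biconditional is propagated inductively along translations by $t_1v_1,\dots,t_{d-1}v_{d-1}$ to compare $f_d^{A}$ and $f_d^{B}$ at all points $-\sum_{j<d}t_jv_j$ with $t_j>0$, and a final translation trick extends the comparison to every point of $Q_d$. (Even in $d=2$, where your common-translate constraint does happen to suffice, the verification is not ``immediate'': one must run the rays from the witnesses $x,y$ down to $\partial(A\cap B)$ and play the two boundary parametrizations $f_1^{\cdot}$ and $f_2^{\cdot}$ against each other to reach the contradiction $x_2<y_2<x_2$.)
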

\begin{proof} Write $\eta_{a}=1_{X_{u}\setminus X_{u}+a}$ for $a\in P$. Assume that $Exp(\eta)$ is a unit for $E^{\mu}$. For $A,B\in Y_{u}$ and $i\in\{1,2,\cdots ,d\}$, note that $A\subset B$ if and only if $f_{i}^{A}(x)\leq f^{B}_{i}(x)$ for each $x\in Q_{i}$.

  Let  $A,B\in X$, and $i\in\{1,2,\cdots ,d\}$. Suppose $f^{A}_{i}(0)\leq f^{B}_{i}(0)$. Then, we claim the following.
\begin{enumerate}[(i)]
\item $f^{A}_{j}(0)\leq f^{B}_{j}(0)$ for each $j\in\{1,2,\cdots ,d\}$, and
\item if $t_{j}>0$ for $j\in\{1,2,\cdots ,d-1\}$, then \[f^{A}_{d}(-\sum_{j=1}^{d-1}t_{j}v_{j})\leq f^{B}_{d}(-\sum_{j=1}^{d-1}t_{j}v_{j}).\]
\end{enumerate}
Observe that $X$ is a measurable subset of $X_{u}$ such that $X+P \subset X$. Consider the set \[N_{i,j}=\{(A^{'},B^{'})\in X\times X: \textrm{$f^{A^{'}}_{i}(0)<f^{B^{'}}_{i}(0)$, $f^{A^{'}}_{j}(0)>f^{B^{'}}_{j}(0)$}\}\] for $i,j\in\{1,2,\cdots , d\}$. By Prop. \ref{pointwise convergence}, $N_{i,j}$ is an open set, and  by Lemma \ref{epsilon a=epsilon b}, $N_{i,j}$ is a null set. Therefore, $N_{i,j}$ is empty.  Similarly, consider the set \[ M_{i,j}=\{(A^{'},B^{'}) \in X \times X: \textrm{ $f^{A^{'}}_{i}(0)=f^{B^{'}}_{i}(0)$, $f^{A^{'}}_{j}(0)>f^{B^{'}}_{j}(0)$}\}\] for $i,j\in\{1,2,\cdots ,d\}$. Fix $i,j\in \{1,2,\cdots ,d\}$ with $i \neq j$.  Suppose $(A^{'},B^{'})\in M_{i,j}$. Let $s,t>0$ be sufficiently small such that $s<t$ and $f^{A^{'}}_{j}(0)>f^{B^{'}}_{j}(-tv_{i})$. Then,  \[
f_i^{A^{'}+sv_i}=f^{A^{'}}_{i}(0)+s<f^{B^{'}}_{i}(0)+t=f_i^{B^{'}+sv_i}.\] 
Since $f^{A^{'}}_{j}$ is decreasing,\[f_{j}^{A^{'}+sv_i}(0)=f^{A^{'}}_{j}(-sv_{i})\geq f^{A^{'}}_{j}(0)>f^{B^{'}}_{j}(-tv_{i})=f_j^{B^{'}+tv_i}(0).\] 
This implies $(A^{'}+sv_{i},B^{'}+tv_{i})\in N_{i,j}$, which is a contradiction. Hence, $M_{i,j}=\emptyset$. Therefore, for $A^{'},B^{'}\in X$ and $i,j\in\{1,2,\cdots d\}$, $f^{A^{'}}_{i}(0)\leq f^{B^{'}}_{i}(0)$ if and only if   $f^{A^{'}}_{j}(0)\leq f^{B^{'}}_{j}(0)$, which proves $(i)$.

 For each $j \in\{1,2,\cdots ,d-1\}$, let $t_{j}>0$.   Let $A,B \in X$ be such that $f_i^A(0) \leq f_i^B(0)$ for some $i$. 
Since $f_{i}^{A}(0)\leq  f_{i}^{B}(0)$, by $(i)$, $f_{1}^{A}(0)\leq f_{1}^{B}(0)$. Since $f^{A+t_{1}v_{1}}_{1}(0)\leq f^{B+t_{1}v_{1}}_{1}(0)$, by $(i)$,  $f^{A+t_{1}v_{1}}_{2}(0)\leq f^{B+t_{1}v_{1}}_{2}(0)$, i.e.
 \[f^{A}_{2}(-t_{1}v_{1})\leq f^{B}_{2}(-t_{1}v_{1}).\]
   
Since $f_{2}^{A+t_{1}v_{1}}(0)+t_{2}\leq f^{B+t_{1}v_{1}}_{2}(0)+t_{2}$,
\[f^{A+t_{1}v_{1}+t_{2}v_{2}}_{2}(0)\leq f^{B+t_{1}v_{1}+t_{2}v_{2}}_{2}(0).\] Once again by $(i)$, 
 \[f^{A}_{3}(-t_{1}v_{1}-t_{2}v_{2})=f_3^{A+t_1v_1+t_2v_2}(0)\leq f_3^{B+t_1v_1+t_2v_2}(0) =  f^{B}_{3}(-t_{1}v_{1}-t_{2}v_{2}).\]

Inductively,
\begin{align}\label{negative}f^{A}_{d}(-\sum_{j=1}^{d-1}t_{j}v_{j})\leq f^{B}_{d}(-\sum_{j=1}^{d-1}t_{j}v_{j}).\end{align} We have proved $(ii)$.

It follows from $(i)$ that for $i \in \{1,2,\cdots,d\}$ and $A,B \in X$, $f_i^A(0)>f_i^B(0)$ if and only if $f_j^A(0)>f_j^B(0)$ for every $j \in \{1,2,\cdots,d\}$. Suppose $f_i^A(0)>f_i^B(0)$ for $A,B \in X$ and $i \in \{1,2,\cdots,d\}$. Arguing as before, we see that \begin{equation}
    \label{too much positivity}
    f^{A}_{d}(-\sum_{j=1}^{d-1}t_{j}v_{j})>f^{B}_{d}(-\sum_{j=1}^{d-1}t_{j}v_{j})
\end{equation} whenever $t_{j}>0$ for $j\in\{1,2,\cdots ,d-1\}$.

 Let $A,B\in X$. Without loss of generality, we can assume  that $f^{A}_{d}(0)\leq f^{B}_{d}(0)$. Then, 
 \begin{equation}
     \label{negative1}
         f^{A}_{d}(-\sum_{j=1}^{d-1}t_{j}v_{j}) \leq f^{B}_{d}(-\sum_{j=1}^{d-1}t_{j}v_{j})
\end{equation} whenever $t_{j}>0$ for $j\in\{1,2,\cdots ,d-1\}$.
 
   Let $x \in Q_d$. Choose $t>0$ such that $A-x+tv_{d}\in X$ and $B-x+tv_{d}\in X$,  i.e.
 \[f_{d}^{A-x+tv_{d}}(0)=f_d^{A}(x)+t\geq 0\] and \[f_{d}^{B-x+tv_{d}}(0)=f_d^B(x)+t\geq 0.\]
  Let $\widetilde{A}=A-x+tv_{d}$ and $\widetilde{B}=B-x+tv_{d}$.
 Suppose $f^{A}_{d}(x)>f^{B}_{d}(x)$. Then,
 \[f^{\widetilde{A}}_{d}(0)=f^{A}_{d}(x)+t> f^{B}_{d}(x)+t=f^{\widetilde{B}}_{d}(0).\] 
By Eq. \ref{too much positivity},\[f^{\widetilde{A}}_{d}(-\sum_{j=1}^{d-1}r_{j}v_{j})>f^{\widetilde{B}}_{d}(-\sum_{j=1}^{d-1}r_{j}v_{j})\] whenever $r_{j}>0$ for $j=1,2,\cdots , d-1$. This implies
\[f^{A}_{d}(\sum_{j=1}^{d-1}(x_{j}-r_{j})v_{j})>f^{B}_{d}(\sum_{j=1}^{d-1}(x_{j}-r_{j})v_{j})\] whenever $r_{j}>0$ for $j=1,2,\cdots , d-1$. This contradicts Eq. \ref{negative1}.
 Hence, $f^{A}_{d}(x)\leq f^{B}_{d}(x)$ for every $x\in Q_{d}$, i.e $A\subset B$.

 Now suppose $A,B\in supp(\mu)$. Then,  there exists $t>0$ such that $A+tv_{1}, B+tv_{1}\in X$. Without loss of generality, assume that $A+tv_{1}\subset B+tv_{1}$. Then,  $A\subset B$. Therefore, if $A,B\in supp(\mu)$, $A\subset B$ or $B\subset A$.  The proof is complete.
\end{proof}

\begin{theorem} 
\label{the most crucial theorem}The following statements are equivalent.
\begin{enumerate}[(i)]
\item $\{Exp(1_{X_{u}\setminus X_{u}+a})\}_{a\in P}$ is a unit for $E^{\mu}$.
    \item There exists  $\lambda \in S(P^{*})$ such that $supp(\mu)=Y^{\lambda}_{u}$.
\end{enumerate}
\end{theorem}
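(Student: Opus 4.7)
The plan is to first dispatch (ii) $\Rightarrow$ (i) as a direct corollary of Lemma \ref{<-}(ii), and then concentrate on (i) $\Rightarrow$ (ii). For the latter, the starting point is that, by Proposition \ref{total order}, the hypothesis in (i) forces $supp(\mu)$ to be totally ordered by inclusion; combined with the $\bbr^d$-invariance of $\mu$, this gives a totally ordered, $\bbr^d$-invariant subset of $Y_u$, and we must show it coincides with $Y_u^\lambda$ for a unique $\lambda \in S(P^*)$.

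The core geometric step is to show that any fixed $A_0\in supp(\mu)$ is already a closed half-space of the form $H^\lambda+t\lambda$. For this I work with the function $f:=f_1^{A_0}:Q_1\to\bbr$. Writing $x=\sum_{j=1}^d x_jv_j$ and $\pi(x):=\sum_{j\geq 2}x_jv_j\in Q_1$, the translation identity $f_1^{A_0+x}(q)=f(q-\pi(x))+x_1$ converts the total-order condition on the orbit $\{A_0+x:x\in\bbr^d\}\subset supp(\mu)$ into the following statement about $g_z(q):=f(q+z)-f(q)$ for $z\in Q_1$: for every $z\in Q_1$ and every $r\in\bbr$,
\[
\sup_{q\in Q_1}g_z(q)\leq r \qquad\text{or}\qquad \inf_{q\in Q_1}g_z(q)\geq r.
\]
Ruling out infinite $\sup$ or $\inf$ (which would force the finite continuous function $g_z$ to be everywhere $\pm\infty$) and then picking $r$ strictly between $\inf g_z$ and $\sup g_z$ forces $g_z$ to be a finite constant depending only on $z$. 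This constant is additive in $z$ (since $g_{z+w}(q)=g_z(q+w)+g_w(q)$ and each summand is constant) and continuous in $z$, hence a linear functional $L:Q_1\to\bbr$, so $f(q)=L(q)+\beta$ for some $\beta\in\bbr$. Therefore
\[
A_0=\Bigl\{w\in\bbr^d:\mu_0(w)\leq\beta\Bigr\},\qquad \mu_0\Bigl(\sum_j x_jv_j\Bigr):=x_1-L(\pi(x)),
\]
with $\mu_0$ a nonzero linear functional. The condition $-P+A_0\subset A_0$ forces $\mu_0\geq 0$ on $P$, and setting $\lambda:=\mu_0/\|\mu_0\|\in S(P^*)$ yields $A_0=H^\lambda+t\lambda\in Y_u^\lambda$ for a suitable $t\in\bbr$.

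From here, translation invariance of $supp(\mu)$ gives $Y_u^\lambda\subset supp(\mu)$ at once. For the reverse inclusion, the same analysis applied to any $B\in supp(\mu)$ produces $\lambda_B\in S(P^*)$ and $s_B\in\bbr$ with $B=H^{\lambda_B}+s_B\lambda_B$; the pointwise comparability of two such affine half-spaces (coming from the total order between $B$ and the translates of $A_0$) forces $\lambda_B$ to be a positive scalar multiple of $\lambda$, hence $\lambda_B=\lambda$. This gives $B\in Y_u^\lambda$ and completes the proof.

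The main obstacle is the functional-equation step in the second paragraph: extracting the rigidity that $g_z$ is constant in $q$, which then promotes $f$ to an affine function. The two-sided alternative ($\sup g_z\leq r$ or $\inf g_z\geq r$ for every real $r$) is what drives this rigidity, and once it is in place the identification of $A_0$ as a half-space and the matching of the direction $\lambda$ across $supp(\mu)$ are routine bookkeeping in terms of $P^*$-functionals and the $\bbr^d$-action on $Y_u$.
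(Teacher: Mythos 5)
Your argument is correct, and it reaches the same two milestones as the paper's proof --- (ii)$\Rightarrow$(i) via Lemma \ref{<-}, and for (i)$\Rightarrow$(ii) the reduction, through Prop. \ref{total order} and the $\mathbb{R}^d$-invariance of $supp(\mu)$, to showing that every $A_0\in supp(\mu)$ is an affine half-space whose outward normal lies in $S(P^*)$, after which the matching of normals across $supp(\mu)$ is the same bookkeeping in both proofs. Where you genuinely diverge is in how that half-space rigidity is extracted. The paper introduces the sub-semigroup $Q_{A_0}=\{x\in\mathbb{R}^d: A_0+x\subset A_0\}$, observes that total order on the orbit gives $Q_{A_0}\cup(-Q_{A_0})=\mathbb{R}^d$, identifies $\partial Q_{A_0}$ with the subgroup $Q_{A_0}\cap(-Q_{A_0})$, and invokes the fact that a connected closed subgroup of $\mathbb{R}^d$ (here the graph of the continuous profile function of $Q_{A_0}$) is a linear subspace; this makes $Q_{A_0}$ a linear half-space and $A_0$ a translate of it. You instead encode the same orbit dichotomy directly on the boundary profile $f_1^{A_0}$ as the alternative $\sup_q g_z(q)\le r$ or $\inf_q g_z(q)\ge r$ for all $(z,r)$, squeeze out that each $g_z$ is constant, and solve the resulting Cauchy equation to get affinity of $f_1^{A_0}$. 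The two are really dual presentations of one rigidity phenomenon --- your $(z,r)$-dichotomy is precisely the statement $x\in Q_{A_0}$ or $x\in -Q_{A_0}$ read in the coordinates $x=(z,r)$ --- but your version is more elementary, trading the topological-group input (connectedness of the graph, closed subgroups of $\mathbb{R}^d$) for the standard continuity-plus-additivity argument, and it bypasses the paper's separate verification that $\partial Q_{A_0}=Q_{A_0}\cap(-Q_{A_0})$. The paper's version, in exchange, produces the stabilizer half-space $Q_{A_0}$ as an intrinsic object, which makes the final step (two origin-based half-spaces that are nested must coincide, hence $\lambda_A=\lambda_B$) slightly cleaner than comparing affine half-spaces with possibly different offsets, though your observation that nested affine half-spaces must have equal unit normals is equally valid.
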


\begin{proof} By Lemma \ref{<-}, if $supp(\mu)=Y^{\lambda}_{u}$ for some $\lambda\in S(P^{*})$, then $\{Exp(1_{X_{u}\setminus X_{u}+a})\}_{a\in P}$ is a unit for $E^{\mu}$.

To prove the converse, let $A\in supp(\mu)$. By Lemma \ref{total order}, for $x\in\mathbb{R}^{d}$, either $A+x\subset A$ or $A\subset A+x$.
Define $Q_{A}:=\{x\in\mathbb{R}^{d}: \textrm{$A+x\subset A$}\}$. Then, $Q_{A}\cup -Q_{A}=\mathbb{R}^{d}$. Note that $-P\subset Q_{A}$ and $-P+Q_A \subset Q_A$. 
Let $Q:=Q_{A}\cap -Q_{A}$. We claim the following:
\begin{enumerate}[(i)]
\item $\partial Q_{A}=Q$, where $\partial Q_A$ is the boundary of $Q_{A}$,  and
\item $Q$ is a vector subspace of $\mathbb{R}^{d}$.
\end{enumerate} We identify $span\{v_{1},v_{2},\cdots v_{d-1}\}$ with $\mathbb{R}^{d-1}$. Let $f:\mathbb{R}^{d-1}\to\mathbb{R}$ be a continuous decreasing function such that $A=\{(x,t): \textrm{$t\leq f(x)$}\}$. Note that
\[Q_{A}
=\{(y,t) \in \bbr^{d-1}\times \bbr: \textrm{$f(x)\leq f(x+y)-t$ for all $x\in\mathbb{R}^{d-1}$ }\},\]
 and \[-Q_{A}=\{(y,t)\in\bbr^{d-1}\times\bbr: \textrm{$f(x)\geq f(x+y)-t$ for all $x\in\mathbb{R}^{d-1}$ }\}.\]
It is not difficult to deduce using the fact that $Q_A \cup -Q_A=\bbr^d$ that
\[\partial Q_{A}\subset \{(y,t)\in\bbr^{d-1}\times\bbr: \textrm{$f(x)= f(x+y)-t$ for all $x\in\mathbb{R}^{d-1}$ }\}=Q.\]

Suppose $(y,t) \in Q$. Set $t_n:=t-\frac{1}{n}$ and $s_n:=t+\frac{1}{n}$. Then, $(y,t_n) \in Q_A$ and $(y,t_n) \to (y,t)$. Also, $(y,s_n) \notin Q_A$ and $(y,s_n) \to (y,t)$. Therefore, $(y,t) \in \partial Q_A$. Hence, $\partial Q_A=Q$.

Let $g:\bbr^{d-1}\to\bbr$ be a continuous decreasing function such that $Q_{A}=\{(x,t): \textrm{$t\leq g(x)$}\}$. 
Clearly, $Q$ is a closed subgroup of $\mathbb{R}^{d}$. Since $Q=\partial Q_A$ is the graph of the continuous function $g$, it is connected. Therefore, $Q$ is a vector space and consequently, $g$ is linear. Hence, there exists $\lambda \in \bbr^d$, $||\lambda||=1$ such that 
\[Q_{A}=\{y\in\mathbb{R}^{d}: \textrm{$\langle\lambda|y\rangle\leq 0$}\}.\]
  Since $-P\subset Q_{A}$, $\lambda\in S(P^{*})$.

 Since $Q_{A}+A\subset A$, there exists $y\in\mathbb{R}^{d}$ such that $A=y+Q_{A}$, i.e. $A \in Y_u^{\lambda}$. Since $supp(\mu) \subset Y_{u}$ is invariant, $x+A=x+y+Q_A\in  supp(\mu)$ for each $x\in\mathbb{R}^{d}$, i.e. $Y^{\lambda}_{u}\subset supp(\mu)$. To stress the dependence of $\lambda$ on $A$, we write $\lambda=\lambda_{A}$. We have proved that  \[supp(\mu)=\displaystyle \bigcup_{A\in supp(\mu)}Y^{\lambda_{A}}_{u}.\]

Suppose $A, B\in supp(\mu)$. Note that $Q_{A}\in Y^{\lambda_{A}}_{u}\subset supp(\mu)$ and $Q_{B}\in Y^{\lambda_{B}}_{u}\subset supp(\mu)$. Then, by Prop. \ref{total order}, either $Q_{A}\subset Q_{B}$ or $Q_{B}\subset Q_{A}$. But this can happen only if $\lambda_{A}=\lambda_{B}$. In that case, $Y^{\lambda_{A}}_{u}=Y^{\lambda_{B}}_{u}$. Therefore, there exists $\lambda\in S(P^{*})$ such that $supp(\mu)=Y^{\lambda}_{u}$. Now the proof is complete.
\end{proof}

\textbf{Notation and Convention: } Let $V$ be an isometric representation of $P$ on a Hilbert space $\clh$. Let $\xi=\{\xi_{a}\}_{a\in P}\in \mathcal{A}(V)$.  We define $\mathcal{H}^{\xi}$ to be the smallest, closed, reducing subspace of $\mathcal{H}$ containing $\{\xi_{a}:\textrm{ $a\in P$}\}$. (Hereafter, all reducing subspaces will be assumed to closed.) 
If $W$ is a direct summand of $V$, we view the product system $E^W$ as a subsystem of $E^V$.  Similarly, we view $\mathcal{A}(W)$ as a subspace of $\mathcal{A}(V)$.

\begin{prop}\label{subspace}
Let $V$ be a pure isometric representation  of $P$ on a Hilbert space $\mathcal{H}$ with commuting range projections.
Let $\xi=\{\xi_{a}\}_{a\in P}\in\mathcal{A}(V)$ be non-zero. Then,  \begin{enumerate}\item there exists an $\bbr^d$-invariant, non-zero Radon measure $\mu$ on $Y_u$ such that $V|_{\mathcal{H}^{\xi}}$ is unitarily equivalent to $V^{\mu}$, and
\item if $Exp(\xi)$ is a unit for $E^{V}$, then there exists $\lambda\in S(P^{*})$ such that $V|_{\mathcal{H}^{\xi}}$ is unitarily equivalent to $S^{\lambda}$.\end{enumerate}
\end{prop}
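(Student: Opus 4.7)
The plan is to reduce both parts to the measure-theoretic classification of pure isometric representations with commuting range projections from \cite{Sundar_NYJM} and \cite{sundarkms}, applied in a form that remembers a cyclic additive cocycle. Part (2) will then follow from Theorem \ref{the most crucial theorem} and Lemma \ref{<-}.

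For part (1), first observe that $\mathcal{H}^\xi$ is by definition a reducing subspace of $V$, so $W := V|_{\mathcal{H}^\xi}$ is a pure isometric representation of $P$ with commuting range projections, and $\xi \in \mathcal{A}(W)$ is cyclic-generating in the sense that the only reducing subspace of $\mathcal{H}^\xi$ containing every $\xi_a$ is $\mathcal{H}^\xi$ itself. The aim is to construct a non-zero $\bbr^d$-invariant Radon measure $\mu$ on $Y_u$ together with a unitary
\[
U:\mathcal{H}^\xi \to L^2(X_u,\mu)
\]
such that $UW_aU^*=V^\mu_a$ and $U\xi_a=1_{X_u \setminus (X_u+a)}$ for every $a \in P$. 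Exploiting the commutativity of $\{W_aW_a^*\}_{a \in P}$, the spectrum of the $C^*$-algebra they generate can be realized as a subset of $X_u$, and $\mu$ should arise as the spectral measure associated with the cyclic family $\{\xi_a\}_{a \in P}$, obtained via Riesz representation from the positive-definite kernel $(a,b)\mapsto \langle \xi_a,\xi_b\rangle$ (which, by the additivity $\xi_{a+b}=\xi_a+V_a\xi_b$ and the relation $E_a\xi_a=0$, behaves exactly like the inner products $\langle 1_{X_u \setminus (X_u+a)},1_{X_u \setminus (X_u+b)}\rangle_\mu$). This is precisely the framework of \cite{Sundar_NYJM, sundarkms}; the additional input required here is the identification of the cyclic generator $\xi$ with the canonical cocycle $\{1_{X_u \setminus (X_u+a)}\}_{a \in P}$, which follows from the uniqueness of the GNS-type construction anchored at $\xi$.

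For part (2), assume $Exp(\xi):=\{Exp_a(\xi_a)\}_{a \in P}$ is a unit of $E^V$. Since $\mathcal{H}^\xi$ is reducing, $E^W$ embeds as a subsystem of $E^V$, so $Exp(\xi)$ is still a unit of $E^W$. Transporting through the unitary $U$ of part (1) — which by construction sends $\xi_a$ to $1_{X_u \setminus (X_u+a)}$ — we obtain that $\{Exp_a(1_{X_u \setminus (X_u+a)})\}_{a \in P}$ is a unit of $E^\mu$. Theorem \ref{the most crucial theorem} then produces $\lambda \in S(P^*)$ with $supp(\mu)=Y_u^\lambda$, and Lemma \ref{<-} gives $V^\mu$ unitarily equivalent to $S^\lambda$; consequently $V|_{\mathcal{H}^\xi}$ is unitarily equivalent to $S^\lambda$.

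The main technical obstacle lies in part (1), in upgrading the abstract classification of \cite{Sundar_NYJM, sundarkms} to a functorial statement that carries $\xi$ onto the distinguished cocycle $\{1_{X_u \setminus (X_u+a)}\}_{a \in P}$. Merely invoking the existence of an intertwining unitary is not enough for part (2); one must use the cyclicity of $\xi$, together with the additivity relation $\xi_{a+b}=\xi_a+V_a\xi_b$ and the commutativity of the range projections, to pin down the unitary so that it lands on the canonical cocycle. Once this bookkeeping is done, part (2) is essentially immediate from the two earlier results.
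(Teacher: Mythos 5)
Your proposal follows essentially the same route as the paper: part (1) is the content of Theorem 3.16 and Remark 3.17 of the cited work on KMS states (\cite{sundarkms}), including the key point you correctly isolate — that the intertwining unitary can be chosen to carry $\xi_a$ onto the canonical cocycle $1_{X_u\setminus (X_u+a)}$ — and part (2) is then exactly the paper's argument via Theorem \ref{the most crucial theorem} and Lemma \ref{<-}. No substantive difference; the paper simply cites the external classification where you sketch its GNS-style construction.
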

\begin{proof}For the proof of $(1)$, we refer the reader to Theorm 3.16 and Remark 3.17 of \cite{sundarkms}. From $(1)$, $V^{\xi}:=V|_{\mathcal{H}^{\xi}}$ is unitarily equivalent to $V^{\mu}$. From the proof of Theorem 3.16 of \cite{sundarkms} we can see that the unitary $U$ intertwining $V^{\xi}$ and $V^{\mu}$ can be chosen such that $U\xi_{a}=1_{X_{u}\setminus X_{u}+a}$ for $a\in P$. Suppose $Exp(\xi)$ is a unit for $E^{V}$. Note that  the subsystem $E^{V^\xi}$ contains $Exp(\xi)$. Hence, $\{Exp(1_{X_{u}\setminus X_{u}+a})\}_{a\in P}$ is a unit for $E^{V^\mu}$. It follows from Thm. \ref{the most crucial theorem} and  Lemma \ref{<-} that $V^{\mu}$ is unitarily equivalent to $S^{\lambda}$ for some $\lambda\in S(P^{*})$. This proves $(2)$.
\end{proof}
 
 Before proceeding further, let us recall the notation introduced in the introduction. 
 For $\lambda\in S(P^{*})$ and $k\in\mathbb{N}_\infty$, let $S^{(\lambda, k)}$ denote the isometric representation $\{S_{\langle\lambda|a\rangle}\otimes 1\}_{a\in P}$ of $P$ acting on the Hilbert space $L^{2}[0,\infty)\otimes \clk$ where $\clk$ is a  Hilbert space of dimension $k$. For $k=1$, we denote $S^{(\lambda,1)}$ by $S^\lambda$.
 For a non-empty countable  set $I$, an injective map $\lambda:I \to S(P^*)$ and a function $k:I \to \bbn_\infty$, set 
\[
S^{(\lambda,k)}:=\bigoplus_{i \in I}S^{(\lambda_i,k_{i})}.
\]
 Let $E^{(\lambda,k)}$ be the product system of  the CAR flow associated with $S^{(\lambda,k)}$.

 \begin{remark}
 \label{disjointness}
 A few properties concerning the representation $S^{(\lambda,k)}$ are summarised below.
\begin{enumerate}
 \item[(1)] For $\lambda_{1},\lambda_{2}\in S(P^*)$ and for $k_1,k_2 \in \bbn_\infty$, $S^{(\lambda_{1}, k_{1})}$ is unitarily equivalent to $S^{(\lambda_{2}, k_{2})}$ if and only if $\lambda_{1}=\lambda_{2}$ and $k_{1}=k_{2}$.  
  Also, $S^{\lambda}$ is irreducible for every $\lambda\in S(P^{*})$, i.e. it  has no non-zero non-trivial  reducing subspace.
Thus, if $\lambda_1 \neq \lambda_2$, the representations $S^{(\lambda_1,k_1)}$ and $S^{(\lambda_2,k_2)}$ are disjoint.
Recall that two isometric representations $V$ and $W$, acting on $\clh$ and $\clk$ respectively,  are said to be disjoint if 
\[
\{T \in B(\clh,\clk): TV_a=W_aT, TV_a^*=W_a^*T \textrm{~~for all $a \in P$}\}=0.
\]

\item[(2)] Consider a countable(non-empty) indexing set $I$. Let $\lambda: I\to S(P^{*})$ be injective, and let $k:I\to \bbn_{\infty}$ be a map. Consider $V=S^{(\lambda,k)}=\bigoplus_{i \in I}S^{(\lambda_i,k_{i})}$ acting on the Hilbert space $\clh=\displaystyle{\bigoplus_{i\in I}}L^{2}[0,\infty)\otimes \mathcal{K}_{i}$, where $\clk_{i}$ is of dimension $k_{i}$. Write $V^{(i)}= S^{(\lambda_{i}, k_{i})}$.
Then, $V^{(i)}$ acts on $\clh_i:=L^{2}[0,\infty)\otimes \clk_i$.  Since $\lambda_{i}\neq \lambda_{j}$ whenever $i\neq j$, the isometric representations $V^{(i)}$ and $V^{(j)}$ are disjoint whenever $i \neq j$. Thus, a bounded operator $T \in \{V_{a}, V^{*}_{a}: \textrm{ $a\in P$}\}^{'}$, if and only if there exists $T_{i}\in \{V^{(i)}_{a}, V^{(i)*}_{a}: \textrm{ $a\in P$}\}^{'}$ such that $T|_{\clh_{i}}=T_{i}$ for each $i\in I$. Morevoer, $\{V^{(i)}_a, V^{(i)*}_a:a \in P\}^{'}=\{1 \otimes R: R \in B(\clk_i)\}$.
Hence, 
\[
\{V_a,V_a^*:a \in P\}^{'}=\bigoplus_{i \in I}B(\clk_i).
\]
\item[(3)] It follows from $(2)$ that   the reducing subspaces of $V$ are of the form $\displaystyle \bigoplus_{j\in J}L^{2}[0,\infty)\otimes \mathcal{W}_{j}$ for some non-empty subset $J$ of $I$, and where, for $j$, $\mathcal{W}_j$ is a  subspace of $\clk_j$. Suppose $\mathcal{W}$ is a non-zero reducing subspace for $V$ such that $V|_{\mathcal{W}}$ is irreducible. Then, there exists $i \in I$ and a one dimensional subspace $\mathcal{W}_i$ of $\clk_i$ such  $\mathcal{W} \subset \clh_i=L^{2}[0,\infty)\otimes \clk_{i}$  and $\mathcal{W}=L^2[0,\infty) \otimes \mathcal{W}_i$. Moroever,  $V|_{\mathcal{W}}$ is unitarily equivalent to $S^{\lambda_{i}}$.

 \end{enumerate}

\end{remark}

The next proposition is the ``only if part" of Thm. \ref{main intro}. The ``uniqueness" part of Thm. \ref{main intro} follows from   the fact that if an isometric representation admits a direct sum decomposition of irreducible representations, then the decomposition is ``unique".

\begin{prop}
Let $V$ be a pure isometric representation of $P$ with commuting range projections on a Hilbert space $\clh$. Suppose the product system $E^{V}$ of the  CAR flow associated with $V$ is type I. Then,  there exists a non-empty, countable set 
 $I$, a map $\lambda:I\to S(P^{*})$ which is injective, and a map $k:I\to \bbn_{\infty}$ such that $V$ is unitarily equivalent to $S^{(\lambda,k)}$. Equivalently, $E^V$ is isomorphic to $E^{(\lambda,k)}$.
\end{prop}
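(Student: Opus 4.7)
The plan is to decompose $\mathcal{H}$ as an orthogonal direct sum of $V$-reducing subspaces of the form $\mathcal{H}^\xi$ appearing in Prop.~\ref{subspace}, each one carrying a spectral label $\lambda_\xi \in S(P^*)$, and then to collect the summands by their $\lambda$-value.

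First I would reduce arbitrary units to exponential ones. For any unit $u = \{u_a\}_{a \in P}$ of $E^V$ the scalar $\varphi(a) := \langle u_a, \Omega_a\rangle$ is a semigroup homomorphism $P \to \mathbb{C}$. For $a \in \mathrm{Int}(P)$ the one-parameter subrepresentation $\{V_{ta}\}_{t \geq 0}$ is itself pure (since $\{ta : t \geq 0\}$ is cofinal in $P$), so by Prop.~\ref{properties of exponential} its units are, up to a scalar character, of exponential form, which forces $\varphi$ to be nonzero on the ray; multiplicativity then gives $\varphi(a) \neq 0$ for every $a \in P$. Dividing $u$ by $\varphi$ yields an exponential unit $\widetilde{u}_a = Exp_a(\xi_a)$ for a uniquely determined additive cocycle $\xi \in \mathcal{A}(V)$ (Remark~\ref{rmk}(3)), and $u$ and $\widetilde{u}$ generate the same subsystem of $E^V$, so for type~I purposes it is enough to track exponential units.

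Next I would set $\mathcal{W} := \overline{\mathrm{span}}\bigcup_\xi \mathcal{H}^\xi$, where $\xi$ ranges over additive cocycles of $V$ for which $Exp(\xi)$ is a unit. Then $\mathcal{W}$ is a reducing subspace. The key verification is that whenever $\xi_a \in \mathcal{W}$, the vector $Exp_a(\xi_a)$ already lies in $E^{V|_\mathcal{W}}(a) = \Gamma_a(Ker(V_a^*) \cap \mathcal{W}) \subset E^V(a)$: one uses the tensor splitting $\Gamma_a(Ker(V_a^*)) = \Gamma_a(Ker(V_a^*) \cap \mathcal{W}) \otimes \Gamma_a(Ker(V_a^*) \cap \mathcal{W}^\perp)$ afforded by $\mathcal{W}$ being reducing, and inducts on $n$ in the It\^o-integral construction of $Exp_a(\xi_a)$, noting that every increment $b(r, r') = \xi_{(r'-r)a}$ is a value of $\xi$ hence sits in $\mathcal{W}$, and that the CAR multiplication respects the splitting because $V\mathcal{W} \subset \mathcal{W}$. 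Consequently every unit of $E^V$ lies in the subsystem $E^{V|_\mathcal{W}}$; the type~I hypothesis forces $E^{V|_\mathcal{W}} = E^V$, which at the $1$-particle level gives $Ker(V_a^*) \subset \mathcal{W}$ for every $a \in P$, and purity of $V$ then yields $\mathcal{W} = \mathcal{H}$.

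Finally I would invoke Zorn's lemma to produce a maximal family $\{\xi_j\}_{j \in J}$ of unit-giving additive cocycles with $\{\mathcal{H}^{\xi_j}\}$ pairwise orthogonal; by Prop.~\ref{subspace}(2) each $V|_{\mathcal{H}^{\xi_j}}$ is unitarily equivalent to some $S^{\lambda_j}$ and is therefore irreducible. A commuting-projection argument shows that for any reducing $\mathcal{K}$ and any such irreducible $\mathcal{H}^\xi$ either $\mathcal{H}^\xi \subset \mathcal{K}$ or $\mathcal{H}^\xi \perp \mathcal{K}$: the projections onto $\mathcal{H}^\xi$ and $\mathcal{K}$ commute, hence split $\mathcal{H}^\xi = (\mathcal{H}^\xi \cap \mathcal{K}) \oplus (\mathcal{H}^\xi \cap \mathcal{K}^\perp)$ into two $V$-reducing pieces, and irreducibility makes one of them trivial. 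Applying this to $\mathcal{K} = \overline{\bigoplus_{j \in J} \mathcal{H}^{\xi_j}}$, together with the density established in the previous paragraph, maximality gives $\mathcal{K} = \mathcal{H}$. Separability of $\mathcal{H}$ makes $J$ countable, and grouping the $j$'s by the value of $\lambda_j$ exhibits $V \simeq S^{(\lambda, k)}$ with $\lambda$ injective on a countable index set. I expect the main technical hurdle to be the inductive verification in the previous paragraph — checking that the It\^o integral defining $Exp_a(\xi_a)$ stays inside the $\mathcal{W}$-factor of the tensor decomposition — since one has to thread the antisymmetric CAR multiplication through the reducing-subspace splitting at each step of the iterated construction.
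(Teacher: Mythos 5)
Your overall architecture (reduce to exponential units, show the unit-giving cocycles span $\clh$, then extract a maximal orthogonal family of irreducible shift pieces by Zorn's lemma) is close to the paper's, and your steps 1 and 2 are sound. But the final step contains a genuine error: you claim that for any reducing subspace $\mathcal{K}$ and any irreducible reducing subspace $\mathcal{H}^\xi$, the two orthogonal projections commute, so that either $\mathcal{H}^\xi\subset\mathcal{K}$ or $\mathcal{H}^\xi\perp\mathcal{K}$. Both projections do lie in the commutant $\{V_a,V_a^*:a\in P\}'$, but that commutant is in general non-abelian (for the very representations being classified it is $\bigoplus_i B(\clk_i)$, non-abelian as soon as some $k_i\geq 2$), and two of its projections need not commute. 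Concretely, for $V=S^\lambda\oplus S^\lambda$ on $\clh_0\oplus\clh_0$, the diagonal $\{(\eta,\eta):\eta\in\clh_0\}$ is an irreducible reducing subspace which is neither contained in nor orthogonal to the reducing subspace $\clh_0\oplus 0$. Since this dichotomy is exactly what you use to contradict maximality, the argument as written does not close. (You may be conflating the hypothesis that $V$ has commuting \emph{range} projections $V_aV_a^*$ with commutativity of projections onto reducing subspaces; these are unrelated.)

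The gap is repairable, and the repair essentially lands you on the paper's proof. Either replace the dichotomy by the standard Schur-type argument: if $P_{\mathcal{K}^\perp}\mathcal{H}^\xi\neq 0$, then $T:=P_{\mathcal{K}^\perp}|_{\mathcal{H}^\xi}$ is a non-zero intertwiner, $T^*T$ is a positive scalar by irreducibility, and $(T^*T)^{-1/2}T$ maps $\mathcal{H}^\xi$ onto a reducing subspace of $\mathcal{K}^\perp$ on which $V$ restricts to a copy of $S^{\lambda_\xi}$ --- but then you must also check this new subspace is admissible for your Zorn family (the paper sidesteps this by letting the family consist of arbitrary reducing subspaces carrying some $S^\mu$, not only those of the form $\mathcal{H}^{\xi_j}$). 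Alternatively, do what the paper does: given an exponential unit $Exp(\xi)$, project the cocycle to $\xi^{(2)}_a=Q\xi_a$ with $Q$ the projection onto $\mathcal{K}^\perp$, use multiplicativity of the second quantisation $\Gamma(Q)$ to conclude that $Exp(\xi^{(2)})$ is again a unit, and then $\mathcal{H}^{\xi^{(2)}}\subset\mathcal{K}^\perp$ violates maximality unless $\xi^{(2)}=0$. That single move replaces both your density step and your complete-reducibility step.
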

\begin{proof} A family $\mathscr{S}$ of closed  subspaces of $\mathcal{H}$ is said to be ``shift reducing" if  \begin{enumerate}[(1)]\item each $\mathcal{K}\in\mathscr{S}$ is a non-zero reducing subspace for $V$,
 \item for each $\mathcal{K}\in \mathscr{S}$, there exists $\lambda\in S(P^*)$ such that $V|_{\mathcal{K}}$ is unitarily equivalent to $S^{\lambda}$, and
 \item if $\mathcal{K},\mathcal{L}\in\mathscr{S}$ and $\mathcal{K}\neq \mathcal{L}$, then $\mathcal{K}\perp \mathcal{L}$.
 \end{enumerate} 
 Let $\mathcal{W}:=\{\mathscr{S}: \textrm{$\mathscr{S}$  is shift reducing}\}$. Note that 
$\mathcal{W}$ is partially ordered where the partial order on $\mathcal{W}$ is given by inclusion. 

 Since $E^{V}$ is type $I$, there exists a non-zero additive cocycle $\xi=\{\xi_{a}\}_{a\in P}$ such that $Exp(\xi)$ is a unit for $E^{V}$. Hence, $\{\mathcal{H}^{\xi}\}$ is in $\mathcal{W}$ by
Prop. \ref{subspace}. Therefore, $\mathcal{W}$ is non-empty. 
A routine application of Zorn's lemma allows us to get a maximal shift reducing family $\mathscr{K}$ of closed subspaces.
  
Define 
\[\mathcal{K}:=\displaystyle{\bigoplus_{W\in\mathscr{K}}}W.\] Note that for each $W\in \mathcal{K}$, there exists $\lambda_{W}\in S(P^{*})$ such that $V|_{W}$ is unitarily equivalent to $S^{\lambda_{W}}$. Hence, $V|_{\mathcal{K}}=\displaystyle{\bigoplus_{W\in\mathcal{K}}}S^{\lambda_{W}}$. Let $\mathcal{L}:=\mathcal{K}^{\perp}$. Note that $\mathcal{K}$ and  $\mathcal{L}$ are reducing subspaces for $V$. Denote the restriction of $V$ to $\mathcal{K}$ and $\mathcal{L}$ by $V^{(1)}$ and $V^{(2)}$ respectively. Also, denote the projection of $\mathcal{H}$ onto $\mathcal{L}$ by $Q$. 

It suffices to prove that $V=V^{(1)}$. 
Denote by $E^{V^{(1)}}$ the product system of the CAR flow associated with $V^{(1)}$. We consider $E^{V^{(1)}}$ as a subsystem of $E^{V}$.  We claim that $E^{V^{(1)}}=E^{V}$. 
 Suppose $u=\{u_{a}\}_{a\in P}$ is a unit for $E^{V}$. We claim that $E^{V^{(1
)}}$ contains $u$. Without loss of generality, we can assume $u$ is exponential.  Then, there exists $\xi=\{\xi_{a}\}_{a\in P}\in \mathcal{A}(V)$ such that $u_{a}=Exp_{a}(\xi_{a})$ for $a\in P$. For $a\in P$, let $\xi^{(1)}_{a}=(1-Q)\xi_{a}$  and $\xi^{(2)}_{a}=Q\xi_{a}$. Then, $\xi^{(i)}=\{\xi^{(i)}_{a}\}_{a\in P}\in\mathcal{A}(V^{(i)})$ for $i=1,2$.

 Note that $\mathcal{Q}=\{\mathcal{Q}_{a}\}_{a\in P}:E^{V}\to E^{V}$ given by
 \[E^{V}(a)\ni u\mapsto \Gamma(Q)u\in E^{V}(a)\] is multiplicative, where $\Gamma(Q)$ is the second quantisation of $Q$. Also, $\Gamma(Q)u_a=Exp_a(\xi^{(2)}_a)$. 
 Therefore, $\{Exp_{a}(\xi^{(2)}_{a})\}_{a\in P}$ is a unit for $E^{V}$.
 
 Assume that $\xi^{(2)}$ is non-zero. Consider the reducing subspace $\mathcal{H}^{\xi^{(2)}}$ of $V$. Then, by Prop. \ref{subspace} there exists $\lambda\in S(P^{*})$ such that  $V|_{\mathcal{H}^{\xi^{(2)}}}$ is unitarily equivalent to $S^{\lambda}$. Now, $\mathscr{K}\cup \{\mathcal{H}^{\xi^{(2)}}\}\in\mathcal{W}$ and  contains $\mathscr{K}$ as a proper subset. This contradicts the maximality of $\mathscr{K}$. Therefore, $\xi^{(2)}=0$. Thus, $E^{V^{(1)}}$ contains $u$. Hence, every  unit of $E^{V}$ is contained in $E^{V^{(1)}}$. Since $E^{V}$ is type I, $E^{V}=E^{V^{(1)}}$. 
 
 Since \[E^{V^{(1)}}(a)=\Gamma_{a}(Ker(V^{(1)*}_a))=\Gamma_{a}(Ker(V_{a}^{*}))=E^V(a)\] for $a\in P$, $Ker(V^{(1)*}_{a})=Ker(V_{a}^{*})$ for $a\in P$. Since $\displaystyle{\bigcup_{a\in P}}Ker(V_{a}^{*})$ is dense in $\clh$, $\mathcal{H}=\mathcal{K}$.  Therefore, $V=V^{(1)}$. The proof is complete. 
\end{proof}

Fix a countable (non-empty) indexing set $I$. Let $\lambda: I\to S(P^{*})$ be injective, and let $k:I\to \bbn_{\infty}$ be a map. Let $V=S^{(\lambda,k)}$. Denote the space $\displaystyle{\bigoplus_{i\in I}}L^{2}[0,\infty)\otimes \mathcal{K}_{i}$ (on which $V$ acts) by $\clh$, where $\mathcal{K}_{i}$ is a Hilbert space of dimension $k_{i}\in \bbn_{\infty}$. For each $i\in I$, let $V^{(i)}=S^{(\lambda_{i}, k_{i})}$, and write $\clh_{i}:=L^{2}[0,\infty)\otimes \mathcal{K}_{i}$ (the Hilbert space on which $V^{(i)}$ acts).
Note that  for $i\in I$, we may view $\mathcal{A}(S^{(\lambda_i,k_{i})})$ as a subspace of $\mathcal{A}(V)$ under the natural inclusion.
\begin{prop}
\label{additive cocycles that give units} 
Let $\xi \in \mathcal{A}(V)$. Then, $\{Exp_{a}(\xi_{a})\}_{a\in P}$ is a unit of $E^V$ if and only if there exists $i\in I$ such that $\xi\in \mathcal{A}(S^{(\lambda_i,k_{i})})$.
\end{prop}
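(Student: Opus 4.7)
The plan is to prove both directions by reducing, for each irreducible summand, to the one-parameter CAR flow theory of Prop.~\ref{properties of exponential}. For the ``if'' direction, fix $i\in I$, set $V^{(i)}:=S^{(\lambda_i,k_i)}$, and suppose $\xi\in \mathcal{A}(V^{(i)})\subset \mathcal{A}(V)$. Write $W=\{S_t^{(k_i)}\}_{t\geq 0}$, so that $V^{(i)}_{a}=W_{\langle\lambda_i|a\rangle}$ on $\mathcal{H}_i=L^2[0,\infty)\otimes\mathcal{K}_i$. The first step is to show that $\xi_a$ depends on $a$ only through $t:=\langle\lambda_i|a\rangle$. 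If $a,a'\in P$ satisfy $\langle\lambda_i|a\rangle=\langle\lambda_i|a'\rangle=t$, then expanding $\xi_{a+a'}$ in two ways via the cocycle identity gives $\xi_a+W_t\xi_{a'}=\xi_{a'}+W_t\xi_a$, hence $\xi_a-\xi_{a'}=W_t(\xi_a-\xi_{a'})$. But $\xi_a,\xi_{a'}\in\ker(W_t^*)$, which is orthogonal to the range of $W_t$, so $\xi_a=\xi_{a'}$. Consequently $\eta_t:=\xi_a$ (for any $a$ with $\langle\lambda_i|a\rangle=t$) defines an addit for $W$, and by Prop.~\ref{properties of exponential}(2), $\{Exp(\eta_t)\}_{t\geq 0}$ is a unit for the one-parameter product system of the CAR flow of $W$. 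Pulling back through $a\mapsto \langle\lambda_i|a\rangle$ and noting the identifications $E^{V^{(i)}}(a)=\Gamma_a(\ker W_t^*)$ and $Exp_a(\xi_a)=Exp(\eta_t)$, this yields a unit in $E^{V^{(i)}}$, and hence in $E^V$ since $E^{V^{(i)}}$ is a subsystem of $E^V$.

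For the ``only if'' direction, suppose $\xi\in\mathcal{A}(V)$ is such that $\{Exp_a(\xi_a)\}_{a\in P}$ is a unit of $E^V$. By Prop.~\ref{subspace}(2), there exists $\lambda\in S(P^*)$ with $V|_{\mathcal{H}^\xi}$ unitarily equivalent to $S^\lambda$; in particular, $V|_{\mathcal{H}^\xi}$ is irreducible. By Remark~\ref{disjointness}(3), every non-zero reducing subspace on which $V$ restricts to an irreducible representation is of the form $L^2[0,\infty)\otimes\mathcal{W}_i$ for some $i\in I$ and some one-dimensional subspace $\mathcal{W}_i\subset\mathcal{K}_i$. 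Thus there is a (unique) $i\in I$ with $\mathcal{H}^\xi\subset\mathcal{H}_i$, and since $\xi_a\in\mathcal{H}^\xi\subset\mathcal{H}_i$ for every $a\in P$, we obtain $\xi\in\mathcal{A}(V^{(i)})=\mathcal{A}(S^{(\lambda_i,k_i)})$, as required.

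The main obstacle is the factorisation step in the ``if'' direction: without verifying that a cocycle for $V^{(i)}$ truly descends to an addit of the one-parameter shift, one cannot appeal to the one-parameter theorem giving $Exp(\eta)$ as a unit. Once this is in hand, the ``only if'' direction becomes essentially a bookkeeping exercise, combining the structural result of Prop.~\ref{subspace}(2) with the description of the reducing subspaces recorded in Remark~\ref{disjointness}(3).
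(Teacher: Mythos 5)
Your proof is correct and follows essentially the same route as the paper's: the ``only if'' direction is the paper's argument verbatim (Prop.~\ref{subspace}(2) gives $V|_{\mathcal{H}^{\xi}}\cong S^{\lambda}$ irreducible, and Remark~\ref{disjointness}(3) forces $\mathcal{H}^{\xi}\subset\mathcal{H}_i$), and the ``if'' direction is the same reduction to the one-parameter statement in Prop.~\ref{properties of exponential}(2) via Remark~\ref{rmk}. The only, minor, difference is that you establish the descent of $\xi$ to a one-parameter addit directly from the cocycle identity together with the orthogonality of $Ker(W_t^{*})$ and the range of $W_t$, whereas the paper invokes the explicit form $\xi_a=1_{(0,\langle\lambda_i|a\rangle)}\otimes\gamma$ of additive cocycles of $S^{(\lambda_i,k_i)}$; your version is self-contained and avoids that classification.
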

\begin{proof} Suppose $\xi=\{\xi_{a}\}_{a\in P} \in \mathcal{A}(V)$ is non-zero and is such that $Exp(\xi)$ is a unit of $E^{V}$. Consider the reducing subspace $\mathcal{H}^{\xi}$ of $V$. By Prop. \ref{subspace}, $V|_{\mathcal{H}^{\xi}}$ is unitarily equivalent to $S^{\lambda}$ for some $\lambda\in S(P^{*})$.   Since $S^{\lambda}$ is irreducible, by Remark \ref{disjointness}, $\mathcal{H}^{\xi}$ is a subspace of $\mathcal{H}_{i}$ and $\lambda=\lambda_{i}$ for some $i\in I$. In particular, $\xi\in\mathcal{A}(S^{\lambda_{i}, k_{i})})$.

Conversely, suppose $\xi=\{\xi_{a}\}_{a\in P}\in\mathcal{A}(S^{(\lambda_{i},k_{i})})$ for some $i\in I$. Then, there exists an additive cocycle $\eta=\{\eta_{a}\}\in \mathcal{A}(S^{\lambda_{i}})$ and  $\gamma\in \mathcal{K}_{i}$ such that $\xi_{a}=\eta_{a}\otimes \gamma$ for $a\in P$. In turn, there is an additive cocycle $\widetilde{\eta}=\{\widetilde{\eta}_{t}\}_{t\geq 0}$ of $\{S_{t}\}_{t\geq 0}$ such that $\eta_{a}=\widetilde{\eta}_{\langle\lambda_{i}|a\rangle}$ for $a\in P$. Denote the exponential map of the product system of the $1$-parameter CAR flow  associated with $\{S_{t}\otimes 1\}_{t\geq 0}$ acting on $\clh_{i}$ by $Exp$. By  Remark \ref{rmk},
 \[Exp(\widetilde{\eta}_{s}\otimes\gamma). Exp(\widetilde{\eta}_{t}\otimes \gamma)=Exp(\widetilde{\eta}_{s+t}\otimes \gamma)\] for $s,t\geq 0$.
Then, for $a,b \in P$,
\begin{align*}Exp_{a}(\xi_{a}).Exp_{b}(\xi_{b})=&Exp_{a}(\eta_{a}\otimes\gamma).Exp_{b}(\eta_{b}\otimes \gamma)\\
=& Exp(\widetilde{\eta}_{\langle\lambda_{i}|a\rangle}\otimes\gamma).Exp(\widetilde{\eta}_{\langle\lambda_{i}|b\rangle}\otimes\gamma)\\
=&Exp(\widetilde{\eta}_{\langle\lambda_{i}|a+b\rangle}\otimes\gamma)\\
=& Exp_{a+b}(\xi_{a+b})
\end{align*} for $a,b\in P$, i.e. $Exp(\xi)$ is a unit. This completes the proof.
\end{proof}

The following is the `if part' of Thm. \ref{main intro}.

\begin{prop} Keeping the forgoing notation,  the CAR flow associated with $V$ is type I.
\end{prop}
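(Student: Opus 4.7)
The plan is to fix a subsystem $F \subseteq E^V$ containing every unit of $E^V$ and show $F = E^V$; combined with spatiality (provided by the vacuum unit $\Omega$), this establishes type I. My strategy has two stages: first populate $F(a)$ with every $1$-particle vector of $E^V(a)$ by exploiting units together with the subsystem property, and then lift this to every decomposable vector $Exp_a(\xi)$ via the It\^o-integral description of $Exp_a$, which is enough by totality of decomposable vectors.

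For the first stage, I invoke Prop. \ref{additive cocycles that give units}: for each $i \in I$ and $\gamma \in \mathcal{K}_i$, the section $\xi^{(i,\gamma)}_a := 1_{[0, \langle \lambda_i|a\rangle)} \otimes \gamma$ is an additive cocycle of $S^{(\lambda_i, k_i)}$, so $\{Exp_a(t\xi^{(i,\gamma)}_a)\}_a$ is a unit of $E^V$ for every $t \in \mathbb{R}$ and hence lies in $F$. Since $Exp_a(t\xi^{(i,\gamma)}_a) = \sum_n t^n x^{(n)}_1$ is a norm-convergent power series in $t$ (the $n$-th iterated It\^o integral being homogeneous of degree $n$ in its additive-decomposable argument), differentiating at $t=0$ places the $1$-particle vector $\xi^{(i,\gamma)}_a$ in the closed subspace $F(a)$. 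Now fix $a \in P$: for each $t \in [0,1]$ we have $ta \leq a$, so $\xi^{(i,\gamma)}_{ta} \in F(ta)$, and multiplying on the right by $\Omega_{(1-t)a} \in F((1-t)a)$ (using $\xi \cdot \Omega_y = \xi$ from the CAR multiplication rule) places $1_{[0, t\langle \lambda_i|a\rangle)} \otimes \gamma$ in $F(a)$ for every $t \in [0,1]$. Taking differences together with the density of step functions in $L^2$ gives $Ker(V_a^{(i)*}) = L^2([0, \langle \lambda_i|a\rangle)) \otimes \mathcal{K}_i \subseteq F(a)$ for each $i$, whence $Ker(V_a^*) = \bigoplus_i Ker(V_a^{(i)*}) \subseteq F(a)$.

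For the second stage, take any $\xi \in Ker(V_a^*) = \mathcal{A}_\Omega(a)$ (see Remark \ref{rmk}). I argue by induction on $n$ that the iterated It\^o integral $x^{(n)}_r$ lies in $F(ra)$ for every $r \in (0,1]$. The bases $x^{(0)}_r = \Omega_{ra}$ and $x^{(1)}_r = (1-V_{ra}V_{ra}^*)\xi \in Ker(V_{ra}^*)$ are in $F(ra)$ by the first stage (applied with $a$ replaced by $ra$). For the inductive step, each Riemann sum from Eq. \ref{ito integral} has the form $S_k = \sum_i x^{(n-1)}_{r_i^{(k)}} \cdot \xi(r_i^{(k)}, r_{i+1}^{(k)}) \cdot \Omega_{(1-r_{i+1}^{(k)})a}$, whose propagator piece $\xi(r_i, r_{i+1}) = V_{r_i a}^* \xi_{r_{i+1} a}$ is a $1$-particle vector in $Ker(V_{(r_{i+1}-r_i)a}^*)$ and is therefore in $F((r_{i+1}-r_i)a)$ by the first stage; the subsystem property applied to the three-piece partition $r_i^{(k)} a + (r_{i+1}^{(k)} - r_i^{(k)}) a + (1 - r_{i+1}^{(k)}) a = a$ then places $S_k \in F(a)$, and closedness with norm convergence gives $x^{(n)}_1 \in F(a)$. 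Summing in $n$ yields $Exp_a(\xi) \in F(a)$ for every $\xi \in Ker(V_a^*)$, and since $\{Exp_a(\xi) : \xi \in Ker(V_a^*)\}$ is total in $E^V(a)$ (Corollary 6.8.3 of \cite{arveson}, as used in Lemma \ref{isoccrcar}), we obtain $F(a) = E^V(a)$ for every $a \in P$, so $F = E^V$ and $E^V$ is type I.

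The main technical delicacy is the inductive bookkeeping in the second stage: one must check at every level of nesting that the factors appearing in the Riemann sums really are $1$-particle vectors or vacua in the correct $Ker(V_{ra}^*)$-fibres so that the first stage applies, and that the three-piece decomposition of $a$ inside $P$ is a legitimate factorisation for the subsystem multiplicativity to kick in. Everything else — convergence of the power series in $t$, norm convergence of the Riemann sums and of $\sum_n x^{(n)}_1$, and density of step functions — is already recorded in Section 2.
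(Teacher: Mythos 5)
Your proof is correct, but it takes a genuinely different route from the paper's. The paper forms the orthogonal projections $\Psi_a:E^V(a)\to F(a)$, observes that $\{\Psi_a\}_{a\in P}$ is multiplicative, and invokes the classification of multiplicative families of projections on one-parameter CCR/CAR flows (Thm.\ 7.6 of \cite{BhatCocycles}, Prop.\ 6.12 of \cite{SUNDAR}, transported via Lemma \ref{isoccrcar}) to write $\Psi_{ta}Exp_a(\eta)=e^{\mu_a t}e^{\langle\eta|\xi^a_t\rangle}Exp_a(Q^a\eta+\xi^a_t)$ on each ray; it then glues the ray-wise data into a global triple $(\xi,Q,\mu)$ as in Thm.\ \ref{injectivityCAR}, and uses the hypothesis that $F$ contains the vacuum unit and all the units $Exp(\eta)$, $\eta\in\mathcal{A}(V^{(i)})$, to force $\xi=0$, $\mu=0$ and $Q=1$, so that each $\Psi_a$ is the identity. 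You instead bypass the projection-structure theorem entirely and bootstrap membership in $F(a)$ by hand: differentiating the units $\{Exp_b(t\xi^{(i,\gamma)}_b)\}_b$ at $t=0$ (legitimate, since $F(a)$ is a closed subspace and the It\^o series is a norm-convergent power series in $t$, exactly as exploited in the proof of Thm.\ \ref{injectivityCAR}) and padding with vacua puts all indicator cocycle values, hence by density all of $Ker(V_a^*)$, into $F(a)$; then the induction through the Riemann sums of Eq.\ \ref{ito integral} — whose propagator pieces $V_{r_ia}^*\xi_{r_{i+1}a}$ are indeed $1$-particle vectors in $Ker(V_{(r_{i+1}-r_i)a}^*)$ — combined with closedness of the fibres and the subsystem multiplicativity yields $Exp_a(\xi)\in F(a)$, and totality finishes. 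Both arguments rest on the same two essential inputs, namely Prop.\ \ref{additive cocycles that give units} (to know which exponential sections are units and hence lie in $F$) and a totality statement; your version is more elementary and self-contained, using only the facts recorded in Section 2, at the price of heavier bookkeeping in the It\^o induction, while the paper's version leans on an external structure theorem but produces along the way the explicit form of the projection morphism, which is reused in the gauge-group computation.
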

\begin{proof} 
Let $F$ be a subsystem of $E^{V}$ containing all the units of $E^{V}$. For $a\in P$, let $\Psi_{a}$ be the projection of $E^{V}(a)$ onto $F(a)$. Then, $\Psi=\{\Psi_{a}\}_{a\in P}$ is multiplicative, i.e. \[\Psi_{a}.\Psi_{b}=\Psi_{a+b}\] for $a,b\in P$.
Let $a\in P$. Consider the one parameter product system $E_{a}=\{E(ta)\}_{t\geq 0}$ and  the multiplicative section of maps $\{\Psi_{ta}\}_{t\geq 0}$. Let $\widetilde{E}_{a}$ be the product system of the $1$-parameter CCR flow associated with $\{V_{ta}\}_{t\geq 0}$.   Suppose $\widetilde{\Psi}=\{\mathcal{Q}_{t}\}_{t\geq 0}:\widetilde{E}_{a}\to \widetilde{E}_{a}$ is such that \begin{enumerate}[(i)]\item $\mathcal{Q}_{t}$ is a projection for $t\geq 0$, and
\item $\mathcal{Q}_{s}.\mathcal{Q}_{t}=\mathcal{Q}_{s+t}$ for $s,t\geq 0$.
\end{enumerate}
Then, by Thm. 7.6 of \cite{BhatCocycles} (see also Prop. 6.12 of \cite{SUNDAR}),\ there exists an additive cocycle $\{\xi_{t}^{a}\}_{t\geq 0}$ of $\{V_{ta}\}_{t\geq 0}$, a projection $Q^{a} \in\{V_{ta},V_{ta}^{*}: \textrm{ $t\geq 0$}\}^{'}$ such that $(1-Q^{a})\xi^{a}_{t}=\xi^{a}_{t}$ for $t\geq 0$, and $\mu_{a}\in\mathbb{R}$ such that
\[\widetilde{\Psi}_{t}e(\eta)=e^{\mu_{a} t}e^{\langle\eta|\xi^{a}_{t}\rangle} e(Q^{a}\eta+\xi^{a}_{t})\] for $\eta\in Ker(V_{ta}^{*})$ and $t\geq 0$.

By Lemma \ref{isoccrcar}, the map $\widetilde{E}_{a}(t)\ni exp(\xi)\to Exp_{a}(\xi)\in E_{a}(t)$ extends to an isomorphism from $\widetilde{E}_{a}$ onto $E_{a}$. Therefore, there exists an additive cocycle $\{\xi_{t}^{a}\}_{t\geq 0}$ of $\{V_{ta}\}_{t\geq 0}$, a projection $Q^{a}\in \{V_{ta}, V_{ta}^{*}: \textrm{ ${t\geq 0}$}\}^{'}$ with $(1-Q^{a})\xi^{a}_{t}=\xi^{a}_{t}$ for $t\geq 0$ and $\mu_a \in \mathbb{R}$, such that
\[\Psi_{ta}Exp_{a}(\eta)=e^{\mu_{a} t}e^{\langle\eta|\xi^{a}_{t}\rangle} Exp_{a}(Q^{a}\eta+\xi^{a}_{t})\] for $\eta\in Ker(V_{ta}^{*})$.

For $a\in P$, let $\xi_{a}:=\xi^{a}_{1}$. Proceeding as in the proof of  Thm. \ref{injectivityCAR}, we can prove that 
\begin{enumerate}[(i)]
\item $\{\xi_{a}\}_{a\in P}$ is an additive cocycle for $V$, 
\item there exists a projection $Q \in\{V_{a}, V_{a}^{*}: \textrm{$a\in P$}\}^{'}$ such that $Q|_{Ker(V_{a}^{*})}=Q_{a}$
\item the map $P\ni a\mapsto \mu_{a}$ is a continuous homomorphism. Hence, there exists $\mu \in\mathbb{R}^{d}$ such that $\mu_{a}=\langle \mu|a\rangle$ for $a\in P$.
\end{enumerate}
Therefore, there exists an additive cocycle $\xi=\{\xi_{a}\}_{a\in P}$, a projection $Q\in \{V_{a}, V_{a}^{*}: \textrm{ $a\in P$}\}^{'}$ and a vector $\mu \in\mathbb{R}^{d}$ such that
\begin{equation}\label{projective cocycle}\Psi_{a}Exp_{a}(\eta)=e^{\langle\mu|a\rangle}e^{\langle \eta|\xi_{a}\rangle}Exp_{a}(Q\eta+\xi_{a})\end{equation} for $a\in P$, $\eta\in Ker(V_{a}^{*})$.

Suppose $u=\{u_{a}\}_{a\in P}$ is an exponential unit for $E^{V}$. Let $\eta=\{\eta_{a}\}_{a\in P}\in \mathcal{A}(V)$ be such that $u_{a}=Exp_{a}(\eta_{a})$ for $a\in P$. Since $F$ contains all the units, it follows  that $\Psi_{a}Exp_{a}(\eta_{a})=Exp_{a}(\eta_{a})$ for $a \in P$. In particular, 
\[\Psi_{a}Exp_{a}(0)=Exp_{a}(0)\] for $a\in P$. By Eq. \ref{projective cocycle},
\[e^{\langle\mu|a\rangle}Exp_{a}(\xi_{a})=Exp_{a}(0)\] for $a\in P$. This implies, 
$\xi_{a}=0$ for $a\in P$ and $\mu=0$.

Note that since $Q\in \{V_{a}, V_{a}^{*}: \textrm{ $a\in P$}\}^{'}$, by Remark \ref{disjointness}, $Q$ is a diagonal operator, i.e. for $i\in I$, there exists a projection $Q^{(i)}\in \{V^{(i)}_{a}, V^{(i)*}_{a}: \textrm{ $a\in P$}\}^{'}$ such that $Q|_{\mathcal{H}_{i}}=Q^{(i)}$.
 Fix $i\in I$. By Prop. \ref{additive cocycles that give units}, for any $\eta=\{\eta_{a}\}_{a\in P}\in \mathcal{A}(V^{(i)})$, $Exp(\eta)$ is a unit for $E^{V}$. Hence
\[\Psi_{a}Exp_{a}(\eta_{a})=Exp_{a}(\eta_{a})\] for $\eta=\{\eta_{a}\}_{a\in P}\in \mathcal{A}(V^{(i)})$. This implies that
\begin{equation}\label{Q}Q\eta_{a}=Q^{(i)}\eta_{a}=\eta_{a}\end{equation}for $a\in P$ and $\eta=\{\eta_{a}\}_{a\in P}\in \mathcal{A}(V^{(i)})$.
Note that for $\gamma \in \clk_i$, $\{1_{(0,\langle\lambda_{i}|a\rangle)}\otimes\gamma\}_{a\in P}$ is an additive cocycle for $V^{(i)}$.  Therefore, the set $\{\eta_{a}: \textrm{ $\{\eta_{a}\}_{a\in P}\in \mathcal{A}(V^{(i)})$, $a\in P$}\}$ is total in $\mathcal{H}_{i}$. Now, Eq. \ref{Q} implies that \[Q^{(i)}\eta=\eta\] for $\eta\in \mathcal{H}_{i}$ for $i\in I$, i.e. $Q$ is the identity operator. Hence, 
\[\Psi_{a}Exp_{a}(\xi)=Exp_{a}(\xi)\] for $\xi\in Ker(V_{a}^{*})$ and $a\in P$. 
Since $\{Exp_{a}(\xi): \textrm{$\xi\in Ker(V_{a}^{*})$}\}$ is total in $E^{V}(a)$, it follows that $\Psi_{a}$ is the identity operator for each $a\in P$. Therefore, $F=E^{V}$. Hence the proof.
\end{proof}

\section{Computation of Index and Gauge Group}
 In this section, we compute the index and the gauge group of the product system $E^{(\lambda,k)}$. Arveson's definition of index for a $1$-parameter product system was extended to the multiparameter case in \cite{piyasa} which we first recall. Let $E$ be a product system over $P$. Denote the set of units of $E$ by $\mathcal{U}_{E}$. Assume that $\mathcal{U}_{E}\neq \emptyset$. Fix $a\in Int(P)$. For $u,v\in \mathcal{U}_{E}$,  let $c_{a}(u,v)\in\mathbb{C}$ be such that
\[\langle u_{ta}|v_{ta}\rangle=e^{tc_{a}(u,v)}\] for $t\geq 0$. The function $\mathcal{U}_{E}\times\mathcal{U}_{E} \ni (u,v)\to c_{a}(u,v)\in\mathbb{C}$ is conditionally positive definite and is called the \textbf{covariance function} of $E$ with respect to $a$. 

Let $\mathcal{H}(\mathcal{U}_{E})$ be the Hilbert space obtained from the covariance function using the GNS construction. For the sake of completeness and also to fix notation, we brief the construction of $\mathcal{H}(\mathcal{U}_{E})$. 
Let $\mathbb{C}_{c}(\mathcal{U}_{E})$ denote the  vector space of finitely supported complex valued functions on $\mathcal{U}_{E}$.
Set
\[
\mathbb{C}_0(\mathcal{U}_E):=\{f \in \bbc_c(\mathcal{U}_E): \sum_{u \in \mathcal{U}_E}f(u)=0\}.
\]
Define a semi-definite inner product on $\mathbb{C}_{0}(\mathcal{U}_{E})$ by \[\langle f|g\rangle=\sum_{u,v\in\mathcal{U}}c_a(u,v)f(u)\overline{g(v)}.\]
Let $\mathcal{H}(\mathcal{U}_{E})$ be the Hilbert space obtained by completing the semi-definite inner product  space $\mathbb{C}_0(\mathcal{U}_E)$.

For $u\in \mathcal{U}_{E}$, let $\delta_{u}:\mathcal{U}_E \to \bbc$ be the indicator function $1_{\{u\}}$.
 Clearly, $\{\delta_{u}-\delta_{v}: \textrm{$u,v\in\mathcal{U}_{E}$}\}$ is total in $\mathcal{H}(\mathcal{U}_{E})$. 
The dimension of the space $\mathcal{H}(\mathcal{U}_{E})$ is independent of the choice of $a\in Int(P)$ and is called the \textbf{index} of $E$ denoted $Ind(E)$. The reader is referred to Prop. 2.4 of \cite{piyasa} for a proof of this statement.

 Consider the isometric representation $V=S^{(\lambda,k)}$ for a non-empty countable indexing set $I$, an injective map $\lambda:I\to S(P^{*})$ and a map $k: I\to \bbn_{\infty}$. The isometric representation $V$ shall remain fixed for the rest of this section. Let $\clh:=\displaystyle{\bigoplus_{i\in I}}L^{2}[0,\infty)\otimes \mathcal{K}_{i}$ be the Hilbert space on which $V$ acts, where $\mathcal{K}_{i}$ is a Hilbert space of dimension $k_{i}\in \bbn_{\infty}$ for $i\in I$. For $i\in I$, let $V^{(i)}=S^{(\lambda_{i}, k_{i})}$ which acts  on $\mathcal{H}_{i}=L^{2}[0,\infty)\otimes \clk_{i}$. Denote the set of units of $E^{V}$ by $\mathcal{U}$. Similarly, denote the set of units of $E^{V^{(i)}}$ by $\mathcal{U}_{i}$ for each $i\in I$. Then, $\mathcal{U}_{i}\subset \mathcal{U}$ for $i\in I$.  Let $c(.,.)$ be the covariance function of $E^{V}$ with respect to a fixed $a\in Int(P)$. Let $\mathcal{U}^{\Omega}$, $\mathcal{U}^{\Omega}_{i}$ respectively denote the set of exponential units of $E^{V}$ and $E^{V^{(i)}}$. By Prop. \ref{additive cocycles that give units}, $\mathcal{U}^{\Omega}=\displaystyle{\bigcup_{i\in I}}~\mathcal{U}^{\Omega}_{i}$.

\begin{remark}
\label{modulo exponential}
    
 Let $u\in \mathcal{U}$. Let $\widetilde{u}\in \mathcal{U}^{\Omega}$ be given by $\widetilde{u}_{b}=\frac{u_b}{\langle u_{b}|\Omega_{b}\rangle}$ for $b\in P$,  where $\Omega_{b}$ is the vacuum vector in $E^{V}(b)$.
Fix $a\in Int(P)$. Now, for $v\in\mathcal{U}$, \[\langle u_{ta}|v_{ta}\rangle= \langle u_{ta}|\Omega_{ta}\rangle e^{tc(\widetilde{u}_{ta},v_{ta})}.\] Since the map $[0,\infty)\ni t\to \langle u_{ta}|\Omega_{ta}\rangle\in \bbc^{*}$ is multiplicative, there exists $z_{u}\in \bbc$ such that $\langle u_{ta}|\Omega_{ta}\rangle=e^{tz_{u}}$ for $t\geq 0$. Hence, \[c(u,v)=z_{u}+c(\widetilde{u},v).\]
It is now routine to verify that if $w,z\in \mathcal{U}$, then \[\langle\delta_{u}-\delta_{v}|\delta_{w}-\delta_{z}\rangle=\langle\delta_{\widetilde{u}}-\delta_{\widetilde{v}}|\delta_{w}-\delta_{z}\rangle\] for  every $u,v\in \mathcal{U}$. Thus, for $u,v \in \mathcal{U}$, 
\[\delta_u-\delta_v=\delta_{\widetilde{u}}-\delta_{\widetilde{v}}\] in $\mathcal{H}(\mathcal{U})$. Hence, the set $\{\delta_{u}-\delta_{v}: \textrm{$u,v\in \mathcal{U}^{\Omega}$}\}$ is total in 
$\mathcal{H}(\mathcal{U})$. 
Similarly,  $\{\delta_{u}-\delta_{v}: \textrm{$u,v\in \mathcal{U}^{\Omega}_{i}$}\}$ is total in $\mathcal{H}(\mathcal{U}_{i})$ for $i\in I$.

\end{remark}

\begin{lemma}\label{orthogonal units}If  $u\in \mathcal{U}^{\Omega}_{i}$ and  $v\in \mathcal{U}^{\Omega}_{j}$ for $i\neq j$, then $c(u,v)=0$.
\end{lemma}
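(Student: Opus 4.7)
The plan is very short because the structure of exponential units lets us compute the covariance directly and reduce everything to an orthogonality of additive cocycles in $\mathcal{H}$.

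First, I would invoke Remark \ref{rmk}(3) (together with Prop.~\ref{additive cocycles that give units} and Prop.~\ref{properties of exponential}): any exponential unit of $E^{V^{(i)}}$ is of the form $u_b = \mathrm{Exp}_b(\xi_b)$ for a unique additive cocycle $\xi \in \mathcal{A}(V^{(i)})$, so in particular $\xi_b \in \mathrm{Ker}(V^{(i)*}_b) \subset \mathcal{H}_i$ for every $b \in P$. Likewise, write $v_b = \mathrm{Exp}_b(\eta_b)$ with $\eta \in \mathcal{A}(V^{(j)})$, so $\eta_b \in \mathcal{H}_j$ for every $b$.

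Next, by the inner product formula in Prop.~\ref{properties of exponential}(1), applied to the one-parameter product system $\{E^V(ta)\}_{t \geq 0}$, we have
\[
\langle u_{ta} \mid v_{ta} \rangle \;=\; \langle \mathrm{Exp}_{a}(\xi_{ta}) \mid \mathrm{Exp}_{a}(\eta_{ta}) \rangle \;=\; \exp\bigl(\langle \xi_{ta} \mid \eta_{ta} \rangle\bigr)
\]
for every $t \geq 0$. Comparing this with the defining relation $\langle u_{ta} \mid v_{ta} \rangle = e^{tc(u,v)}$ of the covariance function, we obtain
\[
t\,c(u,v) \;=\; \langle \xi_{ta} \mid \eta_{ta} \rangle \qquad \text{for all } t \geq 0.
\]

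Finally, since $V = \bigoplus_{\ell \in I} V^{(\ell)}$ acts on $\mathcal{H} = \bigoplus_{\ell \in I} \mathcal{H}_\ell$ and $i \neq j$, the subspaces $\mathcal{H}_i$ and $\mathcal{H}_j$ are orthogonal. Hence $\langle \xi_{ta} \mid \eta_{ta} \rangle = 0$ for every $t \geq 0$, and therefore $c(u,v) = 0$.

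There is no real obstacle here; the only thing to be careful about is to invoke the correct version of the exponential identity, namely the one for the one-parameter product system $\{E^V(ta)\}_{t \geq 0}$ rather than the multiparameter product system, and to note that the additive cocycle arising from an exponential unit of $E^{V^{(i)}}$ genuinely takes values in $\mathcal{H}_i$, which is precisely the content of Prop.~\ref{additive cocycles that give units}.
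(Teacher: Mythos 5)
Your proposal is correct and follows essentially the same route as the paper's proof: express both exponential units via additive cocycles, apply the inner product formula $\langle \mathrm{Exp}(\xi)\mid \mathrm{Exp}(\eta)\rangle = e^{\langle \xi\mid\eta\rangle}$ along the ray $\{ta\}_{t\geq 0}$, and conclude from the orthogonality of $\mathcal{H}_i$ and $\mathcal{H}_j$. The only cosmetic difference is that you spell out the reduction $t\,c(u,v)=\langle\xi_{ta}\mid\eta_{ta}\rangle$ explicitly, whereas the paper just observes that the inner product equals $1$ for all $t$.
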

\begin{proof} Let $\xi=\{\xi_{b}\}_{b\in P}\in \mathcal{A}(V^{(i)})$ and $\eta=\{\eta_{b}\}_{b\in P}\in \mathcal{A}(V^{(j)})$ be such that $u_{b}=Exp_{b}(\xi_{b})$ and $v_{b}=Exp_{b}(\eta_{b})$ for $b \in P$. Then, for $t\geq 0$, \begin{align*}\langle u_{ta}|v_{ta}\rangle=&e^{\langle\xi_{ta}|\eta_{ta}\rangle} \textrm{ 
   (by Prop. \ref{properties of exponential})}\\
=&1 \textrm{    (since $\langle\xi_{ta}|\eta_{ta}\rangle=0$)}.\end{align*}Therefore, $c(u,v)=0$. Hence the proof.
\end{proof}
Let $a\in Int(P)$ and $i\in I$. Denote the covariance function of $E^{V}$, $E^{V^{(i)}}$ (with respect to $a$) by $c$, $c_{i}$ respectively. Note that if $u,v\in\mathcal{U}_{i}$, $c_{i}(u,v)=c(u,v)$.  For each $i\in I$, the map $\mathcal{H}(\mathcal{U}_{i})\ni \delta_{u}-\delta_{v}\to \delta_{u}-\delta_{v}\in \mathcal{H}(\mathcal{U})$ extends to an isometry from $\mathcal{H}(\mathcal{U}_{i})$ into $\mathcal{H}(\mathcal{U})$. Hence, we may consider $\mathcal{H}(\mathcal{U}_{i})$ as a subspace of $\clh(\mathcal{U})$.

\begin{prop}
\label{index computation}With the foregoing notation, we have the following. 
\begin{enumerate}[(i)]
\item $\mathcal{H}(\mathcal{U}_{i})\perp \mathcal{H}(\mathcal{U}_{j})$ if $i\neq j$,
\item $\mathcal{H}(\mathcal{U})=\displaystyle{\bigoplus_{i\in I}}\mathcal{H}(\mathcal{U}_{i})$, and
\item $Ind(E^{V})=\displaystyle{\sum_{i\in I}}Ind(E^{V^{(i)}})$.
\end{enumerate}
\end{prop}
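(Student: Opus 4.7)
The plan is to deduce everything from two ingredients that are already in hand: the totality of $\{\delta_u - \delta_v : u, v \in \mathcal{U}^\Omega\}$ in $\mathcal{H}(\mathcal{U})$ (Remark \ref{modulo exponential}) and the vanishing $c(u,v) = 0$ whenever $u \in \mathcal{U}^\Omega_i$, $v \in \mathcal{U}^\Omega_j$ with $i \neq j$ (Lemma \ref{orthogonal units}). Once (i) and (ii) are in place, (iii) is immediate upon taking Hilbert space dimensions.

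For part (i), I would fix $i \neq j$ and take typical spanning vectors $\delta_u - \delta_v$ of $\mathcal{H}(\mathcal{U}_i)$ with $u, v \in \mathcal{U}^\Omega_i$ and $\delta_w - \delta_z$ of $\mathcal{H}(\mathcal{U}_j)$ with $w, z \in \mathcal{U}^\Omega_j$; the restriction to exponential units on each side is legitimate because Remark \ref{modulo exponential} applies equally to $E^{V^{(i)}}$ and $E^{V^{(j)}}$. Expanding the inner product gives
\[
\langle \delta_u - \delta_v \mid \delta_w - \delta_z \rangle = c(u,w) - c(u,z) - c(v,w) + c(v,z),
\]
and all four terms vanish by Lemma \ref{orthogonal units}. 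Sesquilinearity and the density just recalled then give $\mathcal{H}(\mathcal{U}_i) \perp \mathcal{H}(\mathcal{U}_j)$.

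For part (ii), the crucial observation is that the vacuum unit $\Omega$ of $E^V$ corresponds to the zero additive cocycle, which belongs to $\mathcal{A}(V^{(i)})$ for every $i \in I$; equivalently, the inclusion $\Gamma_a(Ker(V_b^{(i)*})) \hookrightarrow \Gamma_a(Ker(V_b^*))$ sends vacuum to vacuum, so $\Omega \in \mathcal{U}^\Omega_i$ for every $i \in I$. Given any $u \in \mathcal{U}^\Omega$, Prop. \ref{additive cocycles that give units} produces some $i(u) \in I$ with $u \in \mathcal{U}^\Omega_{i(u)}$, and hence $\delta_u - \delta_\Omega$ is a legitimate element of $\mathcal{H}(\mathcal{U}_{i(u)}) \subseteq \bigoplus_{i \in I} \mathcal{H}(\mathcal{U}_i)$. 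For arbitrary $u, v \in \mathcal{U}^\Omega$, the telescoping identity $\delta_u - \delta_v = (\delta_u - \delta_\Omega) - (\delta_v - \delta_\Omega)$ places the entire total set $\{\delta_u - \delta_v : u,v \in \mathcal{U}^\Omega\}$ inside $\bigoplus_i \mathcal{H}(\mathcal{U}_i)$, and together with (i) this yields $\mathcal{H}(\mathcal{U}) = \bigoplus_{i \in I} \mathcal{H}(\mathcal{U}_i)$.

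Finally, (iii) follows by taking dimensions in (ii): $Ind(E^V) = \dim \mathcal{H}(\mathcal{U}) = \sum_{i \in I} \dim \mathcal{H}(\mathcal{U}_i) = \sum_{i \in I} Ind(E^{V^{(i)}})$. There is no substantive obstacle here; the only point that requires care is the identification of the vacuum of $E^V$ with the vacuum of each subsystem $E^{V^{(i)}}$, but that is a direct consequence of the naturality of the antisymmetric Fock construction under isometric inclusions.
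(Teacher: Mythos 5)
Your proof is correct and follows essentially the same route as the paper: orthogonality of the spanning sets $\{\delta_u-\delta_v : u,v\in\mathcal{U}^{\Omega}_i\}$ via Lemma \ref{orthogonal units}, the telescoping trick $\delta_u-\delta_v=(\delta_u-\delta_\Omega)-(\delta_v-\delta_\Omega)$ through the common vacuum unit for (ii), and dimension counting for (iii). Your explicit remark that the vacuum unit lies in every $\mathcal{U}^{\Omega}_i$ is a point the paper uses implicitly, but the argument is otherwise identical.
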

\begin{proof}Clearly, it suffices to prove $(i)$ and $(ii)$. 
For $i\in I$, let $\mathcal{W}_{i}:=\{\delta_{u}-\delta_{v}: \textrm{$u,v\in \mathcal{U}^{\Omega}_{i}$}\}$. As observed in Remark \ref{modulo exponential},  $\mathcal{W}_{i}$ is total in $\mathcal{H}(\mathcal{U}_{i})$ for $i\in I$. Using Lemma \ref{orthogonal units}, it is routine to verify that \[\langle\delta_{u_{1}}-\delta_{v_{1}}|\delta_{u_{2}}-\delta_{v_{2}}\rangle=0\] whenever $u_{1},v_{1}\in \mathcal{U}_{i}^{\Omega} $ and $u_{2},v_{2}\in \mathcal{U}_{j}^{\Omega}$ and $i\neq j$.  Now $(i)$ follows. 

Consider $\mathcal{W}=\displaystyle{\bigcup_{i\in I}}~\mathcal{W}_{i}\subset \bigoplus_{i\in I}\mathcal{H}(\mathcal{U}_{i})$. Note that if $u\in\mathcal{U}_{i}^{\Omega}$, then $\delta_{u}-\delta_{\Omega}\in \mathcal{W}_i$, where $\Omega$ is the vacuum unit. Let $u,v \in \mathcal{U}^{\Omega}$. Then, there exist $i,j \in I$ such that  $u\in \mathcal{U}_{i}^{\Omega}$ and $v\in \mathcal{U}_{j}^{\Omega}$. Then,  $\delta_{u}-\delta_{v}=(\delta_{u}-\delta_{\Omega})+(\delta_{\Omega}-\delta_{v})$. Consequently, $\delta_{u}-\delta_{v}\in span (\mathcal{W})$. Therefore, $\mathcal{W}$ is total in $\mathcal{H}(\mathcal{U})$, and hence $\displaystyle \mathcal{H}(\mathcal{U})=\bigoplus_{i\in I}\mathcal{H}(\mathcal{U}_{i})$. This proves $(ii)$.
\end{proof}

Let $G$ denote the group of automorphisms of $E^V$, also called the gauge group of $E^V$. 
 Denote the set of normalised units of $E^{V}$ by $\mathcal{U}^{n}$. It follows from Prop. \ref{additive cocycles that give units}  that the normalised units of $E^{V}$ are of the form $\Big\{e^{i\langle\lambda|a\rangle}e^{\frac{-||\xi_{a}||^{2}}{2}}Exp_{a}(\xi_{a})\Big \}_{a \in P}$ for some $\xi=\{\xi_{a}\}\in\mathcal{A}(V^{(i)})$ and $\lambda\in\bbr^{d}$. 
 For each $i\in I$, we denote the unitary group of $\mathcal{K}_{i}$ by $U(k_{i})$.
\vspace{.5 cm}

\noindent \textit{Proof of Thm. \ref{gauge computation}:} 
\begin{enumerate}

\item[(1)] Suppose that $I=\{i\}$ is singleton. Denote $\clk_i$ by $\clk$, $\lambda_i$ by $\lambda$ and $k_i$ by $k$. 
  Let $E^{(\lambda,k)}$ be the product system of the CAR flow associated with $V=S^{(\lambda,k)}$. Since $V$ is a pull back of the one parameter shift semigroup $\{S_t \otimes 1\}$ on $L^2[0,\infty) \otimes \clk$ by the homomorphism $P \ni a \to \langle \lambda|a \rangle \in \bbc$, and since the product systems of CAR and CCR flows associated with the one-parameter shift semigroup $\{S_t \otimes 1\}_{t \geq 0}$ are isomorphic, $E^{(\lambda,k)}$ is isomorphic to  the product system $F^{(\lambda,k)}$ of the CCR flow associated with $V$. In fact,  the map $T=\{T_{a}\}_{a\in P}:E^{(\lambda, k)}\to F^{(\lambda, k)}$ defined by
\[T_{a}( Exp_{a}(\xi))= e(\xi), \] for $\xi\in Ker(V_{a}^{*})$  and $a\in P$ is an isomorphism. Hence, $G$ is isomorphic to the automorphism group of $F^{(\lambda,k)}$.
 Since the automorphism group of a CCR flow acts transitively on the set of normalised units, the conclusion follows. The fact that the gauge group of a CCR flow acts transitively on the set of normalised units can been seen from the explicit description of units and the gauge group obtained in Thm. 5.1 and Thm. 7.3 of \cite{Anbu_Vasanth}. 
 
 Since $E^{(\lambda,k)}$ and $F^{(\lambda,k)}$ are isomorphic, they have the same index. But, by Prop. 2.7 of \cite{piyasa}, \[Ind(F^{(\lambda,k)})=\dim \cla(S^{(\lambda,k)}).\] Note that for every $\eta \in \clk$, $\{1_{(0,\langle \lambda |a \rangle)} \otimes \eta\}_{a \in P}$ is an additive cocycle for $S^{(\lambda,k)}$, and it is not difficult to see that every additive cocycle is of this form. Thus, 
 \[
 Ind(E^{(\lambda,k)})=Ind(F^{(\lambda,k)})=\dim \clk=k.\]

\item[(2)] 
Next assume that $I$ has at least two elements. It is clear from Prop. \ref{index computation} and $(1)$ that 
\[
Ind(E^V)=\sum_{i \in I}k_i.\]
 Let $\Psi=\{\Psi_a\}_{a \in P} \in G$ be given.  
 For $a\in P$, consider the one-parameter product system $E_{a}=\{E(ta)\}_{t\geq 0}$. By Lemma \ref{isostructure}, there exists a unitary $U_{a} \in\{V_{ta}, V_{ta}^{*}: \textrm{ $t\geq 0$}\}^{'}$, an additive cocycle $\xi^{a}=\{\xi^{a}_{t}\}_{t\geq 0}$ and $\mu_{a}\in\mathbb{R}^{d}$ such that
\[\Psi_{ta}Exp_{a}(\eta)=e^{i\mu_{a}t}e^{-\frac{||\xi^{a}_{t}||^{2}}{2}-\langle U_{a}\eta|\xi^{a}_{t}\rangle}Exp_{a}(U_{a}\eta+\xi^{a}_{t})\] for $\eta\in Ker(V_{ta}^{*})$.
Proceeding as in the proof of Theorem \ref{injectivityCAR}, we see that there exists $\xi=\{\xi_{a}\}_{a\in P}\in \mathcal{A}(V)$, 
 a unitary $U\in \{V_{a}, V_{a}^{*}: \textrm{ $a\in P$}\}^{'}$ and $\mu \in\mathbb{R}^{d}$ such that
 \begin{align}\label{automorphism structure}\Psi_{a}Exp_{a}(\eta)=e^{i\langle\mu|a\rangle}e^{-\frac{||\xi_{a}||^{2}}{2}-\langle U\eta|\xi_{a}\rangle}Exp_{a}(U\eta+\xi_{a})\end{align} for $\eta\in Ker(V_{a}^{*})$ and $a\in P$.

  Assume that the additive cocycle $\{\xi_{a}\}_{a\in P}$ is non-zero. Note that $\{\Psi_{a}Exp_{a}(0)\}_{a\in P}$ is a unit of $E^{V}$. Hence,  $\{Exp_{a}(\xi_{a})\}_{a\in P}$ is a unit for $E^{V}$ (the map  $P \ni a \to \langle \xi_a|\eta_a \rangle \in \bbc$ is additive if $\xi,\eta  \in \mathcal{A}(V)$).  By Prop. \ref{additive cocycles that give units}, there exists $i\in I$ such that $\{\xi_{a}\}_{a\in P}\in\mathcal{A}(V^{(i)})$. Let $j\in I$,  $j\neq i$. Let $\{\eta_{a}\}_{a\in P}\in \mathcal{A}(V^{(j)})$ be non-zero. By Prop. \ref{additive cocycles that give units}, $\{Exp_{a}(\eta_{a})\}_{a\in P}$ is a unit for $E^{V}$. Therefore,
 $\{\Psi_{a}Exp_{a}(\eta_{a})\}_{a\in P}$ is a unit for $E^{V}$.  Eq. \ref{automorphism structure} implies that $\{Exp_{a}(U\eta_{a}+\xi_{a})\}_{a\in P}$ is a unit for $E^{V}$. Note that since $U\in \{V_{a},V_{a}^{*}: \textrm{ $a\in P$}\}^{'}$, $U$ is a diagonal operator, i.e. there exists a unitary operator $U_{i}\in \{V^{(i)}_{ta},V^{(i)*}_{ta}: \textrm{$t\geq 0$}\}^{'}$ such that $U|_{\clh_{i}}=U_{i}$ for $i\in I$. Therefore, $U\eta_{a}\in Ker(V^{(j)*}_{a})$ for $a\in P$. Hence, $\{U\eta_{a}+\xi_{a}\}_{a\in P}\not\in \mathcal{A}(V^{(\ell)})$ for any $\ell \in I$, contradicting Prop.\ref{additive cocycles that give units}. As a consequence, $\xi_{a}=0$ for $a\in P$. 
 Therefore, 
\[\Psi_{a}Exp_{a}(\xi)=e^{i\langle\mu |a\rangle}Exp_{a}(U\xi)\] for $\xi\in Ker(V_{a}^{*})$ and $a\in P$. 

Suppose $\mu \in \bbr^{d}$ and $U\in\{V_{a},V_{a}^{*}: \textrm{$a\in P$}\}^{'}$. Then, there exists $\Psi^{(\mu,U)} \in G$ such that  \[\Psi^{(\mu ,U)}_{a}Exp_{a}(\xi)=e^{i\langle\mu |a\rangle}Exp_{a}(U\xi)\] for $\xi\in Ker(V_{a}^{*})$ and $a\in P$.  Note that $\Psi \in G$ because $\Psi_a=e^{i \langle \mu |a \rangle}\Gamma(U)$ for $a \in P$, where $\Gamma(U)$ is the second quantisation map. 

Let $M:=\{V_a,V_a^*:a \in P\}^{'}$ and denote the unitary group of $M$ by $\mathcal{U}(M)$. By Remark \ref{disjointness}, we have $\mathcal{U}(M)=\displaystyle \prod_{i \in I}U(k_i)$. We have shown that the map 
\[
\bbr^d \times \mathcal{U}(M) \ni (\mu ,U) \to \Psi^{(\mu ,U)} \in G
\]
is an isomorphism of groups.   Hence, if $\Psi$ is an automorphism of $E^{V}$, then for $a \in P$, \begin{equation}
      \label{intransitive}
  \Psi_a \Omega_a=\Psi_{a}Exp_{a}(0)=e^{i\langle \mu |a\rangle}Exp_{a}(0)=e^{i\langle \mu |a \rangle}\Omega_a
  \end{equation} for some $\mu \in\bbr^{d}$. Let $\eta=\{\eta_{a}\}_{a\in P}$ be a non-zero additive cocycle such that $Exp(\eta)$ is a  unit.  Then, $u=\{u_{a}\}_{a\in P}$ given by $u_{a}= e^{-\frac{||\eta_{a}||^{2}}{2}}Exp(\eta_{a})$ is a normalised unit, but, by Eq. \ref{intransitive},  $\Psi.\Omega\neq u$ for every $\Psi\in G$. Hence, the action of $G$ on $\mathcal{U}^{n}$ is not transitive. 
 \end{enumerate}
 
 \begin{remark}
Suppose $d \geq 2$. Then, $S(P^*)$ is uncountable. This is because, as $P$ is pointed,  $P^*$ spans $\bbr^d$. It follows from Thm. \ref{gauge computation} and Thm. \ref{main intro} that  there are uncountably many  CAR flows that are type I and for which the action of the gauge group on the set of normalised units is not transitive. 
 
 \end{remark}

\begin{remark}
It is immediate from Thm. \ref{main intro} and Thm. \ref{gauge computation} that if $V$ is an isometric representation with commuting range projections such that $E^V$ is type $I$ and has index one, then $V$ is unitarily equivalent to $S^\lambda$ for some $\lambda \in S(P^*)$, i.e. $E^V$ is `a pullback' of a one parameter CAR flow. The analogous statement for $F^V$, the product system of the CCR flow associated with $V$, is not true (see \cite{piyasa}). 
 \end{remark}
  
\textbf{Question:} Is it possible to construct an isometric representation $V$ of $P$ such that $E^V$ is type $I$, has index one, but $V$ is not unitarily equivalent to $S^\lambda$ for any $\lambda \in S(P^*)$?

\bibliography{reff}
\bibliographystyle{amsplain} 
\end{document}